\crefname{section}{Section}{Sections}
\crefname{subsection}{\S}{\S\S}
\crefname{subsubsection}{\S}{\S\S}
\crefname{defn}{definition}{definitions}
\crefname{ex}{example}{examples}
\crefname{exs}{example}{examples}
\crefname{rem}{remark}{remarks}
\crefname{rems}{remark}{remarks}
\crefname{convention}{convention}{conventions}
\crefname{notation}{notation}{notations}
\crefname{table}{table}{tables}
\crefname{lem}{lemma}{lemmas}
\crefname{prop}{proposition}{propositions}
\crefname{propositionN}{proposition}{propositions}
\crefname{cor}{corollary}{corollaries}
\crefname{corollaryN}{corollary}{corollaries}
\crefname{thm}{theorem}{theorems}
\crefname{thmN}{theorem}{theorems}
\crefname{conj}{conjecture}{conjectures}
\crefname{conjN}{conjecture}{conjectures}
\crefname{enumi}{}{}
\crefname{assumption}{assumption}{Assumptions}
\crefname{construction}{construction}{Constructions}
\crefname{question}{question}{Questions}
\crefname{equation}{}{}
\crefname{setup}{setup}{setup}
\numberwithin{equation}{section}
\newtheoremstyle{standard}
{16pt} 
{16pt} 
{} 
{} 
{\bfseries}
{} 
{ } 
{{\thmname{#1~}}{\thmnumber{#2.}}\thmnote{~(#3)}} 
\newtheoremstyle{kursiv}
{16pt} 
{16pt} 
{\itshape} 
{} 
{\bfseries}
{} 
{ } 
{{\thmname{#1~}}{\thmnumber{#2.}}\thmnote{~(#3)}} 
\theoremstyle{standard}
\newtheorem{defn}{Definition}[section]
\newtheorem{ex}[defn]{Example}
\newtheorem{rem}[defn]{Remark}
\newtheorem{rems}[defn]{Remarks}
\newtheorem{setup}[defn]{}
\theoremstyle{kursiv}
\newtheorem{thm}[defn]{Theorem}
\newtheorem{prop}[defn]{Proposition}
\newtheorem{cor}[defn]{Corollary}
\newtheorem{lem}[defn]{Lemma}
\newtheorem{thmN}{Theorem}
\newtheorem{conjN}{Conjecture}
\newcommand\bC{{\mathbb C}}
\newcommand\bG{G}
\newcommand\bQ{{\mathbb Q}}
\newcommand\bR{{\mathbb R}}
\newcommand\bS{{\mathbb S}}
\newcommand\bT{{\mathbb T}}
\newcommand\bZ{{\mathbb Z}}
\newcommand{\cA}{\ensuremath{\mathcal{A}}}
\newcommand{\cL}{\ensuremath{\mathcal{L}}}
\newcommand{\Lf}{\ensuremath{\mathbf{L}}}
\newcommand{\dd}{\mathop{}\!\mathrm{d}}
\newcommand{\Q}{\ensuremath{\mathbb{Q}}}
\newcommand{\R}{\ensuremath{\mathbb{R}}}
\newcommand{\N}{\ensuremath{\mathbb{N}}}
\newcommand{\Z}{\ensuremath{\mathbb{Z}}}
\newcommand{\SSS}{\ensuremath{\mathbb{S}}}
\newcommand{\T}{\ensuremath{\mathbb{T}}}
\newcommand{\be}{\ensuremath{\mathbf{e}}}
\newcommand{\bk}{\ensuremath{\mathbf{k}}}
\newcommand{\bm}{\ensuremath{\mathbf{m}}}
\newcommand{\bz}{\ensuremath{\mathbf{z}}}
\newcommand{\bth}{\ensuremath{\mathbf{\theta}}}
\newcommand{\per}{\ensuremath{\mathrm{per}}}
\newcommand{\coloneq}{\colonequals}
\newcommand{\pmat}[1]{\begin{pmatrix} #1 \end{pmatrix}}
\DeclareMathOperator{\one}{\mathbf{1}}
\DeclareMathOperator{\supp}{supp}
\DeclareMathOperator{\Fl}{Fl}
\DeclareMathOperator{\id}{id}
\DeclareMathOperator{\GL}{\mathrm{GL}}
\DeclareMathOperator{\End}{\mathrm{End}}
\DeclareMathOperator{\SO}{\mathrm{SO}}
\DeclareMathOperator{\OO}{\mathrm{O}}
\DeclareMathOperator{\im}{im}
\DeclareMathOperator{\Aut}{Aut}
\DeclareMathOperator{\ad}{ad}
\newcommand{\Frechet}{Fr\'{e}chet}
\newcommand{\Spec}{\mathrm{Spec}}
\newcommand{\Sp}{\mathrm{Sp}}
\newcommand{\xrightarrowdbl}[2][]{%
  \xrightarrow[#1]{#2}\mathrel{\mkern-14mu}\rightarrow
}
\DeclareMathOperator{\BMS}{BMS}
\newcommand{\func}[5]{#1 \colon #2 \rightarrow #3 ,\quad #4 \mapsto #5}
\newcommand{\mset}[2]{ \left\{#1\middle|#2\right\} }
\newcommand{\Cinftylim}{C^\infty_{\mathrm{el}}(\R)}
\newcommand{\so}{\mathfrak{so}}
\begin{document} 
\title{On the singularities of the exponential function of a semidirect product}

 \author{Alexandru Chirvasitu\footnote{University at Buffalo, Buffalo, NY, United States of America: \href{mailto:achirvas@buffalo.edu}{achirvas@buffalo.edu}}, Rafael Dahmen\footnote{KIT Karlsruhe, Germany},\\
 Karl--Hermann Neeb\footnote{Department Mathematik, FAU,
 Erlangen, Germany: \href{mailto:neeb@math.fau.de}{neeb@math.fau.de}},
 Alexander Schmeding\footnote{NTNU Trondheim, Norway: \href{mailto:alexander.schmeding@ntnu.no}{alexander.schmeding@ntnu.no}}}
\date{}
{\let\newpage\relax\maketitle}

\begin{abstract}
  We show that the Fr\'echet--Lie groups of the form $C^{\infty}(M)\rtimes \mathbb{R}$ resulting from smooth flows on compact manifolds $M$ fail to be locally exponential in several cases: when at least one non-periodic orbit is locally closed, or when the flow restricts to a linear one on an orbit closure diffeomorphic to a torus. As an application, we prove that the Bondi--Metzner--Sachs group of symmetries of an asymptotically flat spacetime is not locally exponential.
\end{abstract}

\textbf{Keywords:} infinite-dimensional Lie group, locally exponential, BMS group, asymptotically flat spacetime, Liouville number
\medskip

\textbf{MSC2020:}
22E65 (primary); 
22E66, 58B25, 58D05, 37C05 (secondary),
\\[1em]


\section{Introduction} 
For infinite-dimensional Lie groups beyond the Banach space setting, the Lie group exponential in general does not induce a local diffeomorphism near the unit, as singular points might exist arbitrarily close to the unit. Groups exhibiting this pathology are called \emph{non-locally exponential}. Examples include diffeomorphism groups and the formal diffeomorphisms of $\R^n$, cf. \cite{Neeb06}. In the present note we investigate when the Lie group exponential of a semidirect product exhibits singularities making the resulting Lie groups non-locally exponential.

Concretely, we are interested in semidirect products arising as the lift of a right Lie group action $\sigma \colon M\times G \rightarrow M$ on a compact manifold $M$ to the space of smooth functions $C^\infty (M)$. These give rise to our main examples of semidirect products
\begin{equation*}
  C^\infty(M) \rtimes_{\alpha} G, \qquad \text{where } \alpha \colon G\times C^\infty (M)\rightarrow C^\infty (M),\quad \alpha_g(F)=F\circ \sigma_g.
\end{equation*}
Note that to study singularities in these semidirect products it suffices to study singularities of the exponential function by restricting to the action of $1$-parameter subgroups of $G$. Every $1$-parameter subgroup of $G$ gives rise to a subgroup $C^\infty(M) \rtimes \R$ of $C^\infty (M)\rtimes G$. Then singularities of the exponential function of the subgroup are inherited by the exponential function of the semidirect product by naturality.

We can and will therefore focus our investigation on semidirect products involving $1$-parameter groups.  Several sufficient criteria for the existence of singularities arbitrarily close to the unit in these semidirect products are established. These allow us to treat important classes of examples such as the Bondi-Metzner-Sachs (BMS) group for asymptotically flat spacetimes from general relativity, \cite{PaS22}. An asymptotically flat spacetime models an isolated gravitational system which approximate Minkowski spacetime "at infinity". These spacetimes admit a unique conformal compactification. While the BMS-group was originally conceived via the construction of a Lie algebra of vector fields satisfying certain decay conditions towards infinity, the compactification of spacetime allows to identify them with certain diffeomorphisms of a regular part at the boundary. In particular, this identifies the BMS group as a semidirect product of the Lorentz group and an infinite dimensional vector space of ``supertranslations". Hence our techniques allow us to prove the following 

\begin{thmN}
  The BMS-group is not locally exponential.
\end{thmN}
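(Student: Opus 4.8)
The plan is to reduce the assertion to the criteria established above for smooth flows on compact manifolds, applied to the flow induced on the celestial sphere by a one-parameter group of Lorentz boosts. First I would invoke the identification, recorded in \cite{PaS22} and recalled in the introduction, of the BMS group (after conformal compactification) with a semidirect product of the form $C^\infty(S^2) \rtimes_\alpha \Lor$, in which $\Lor \cong \PSL(2,\bC)$ acts on the celestial $2$-sphere $S^2 \cong \bC \cup \{\infty\}$ by M\"obius transformations and $\alpha$ is the associated pullback action $\alpha_g(F) = F \circ \sigma_g$. By the reduction spelled out in the introduction it then suffices to exhibit a single one-parameter subgroup $\gamma \colon \R \to \Lor$ for which the \Frechet--Lie group $C^\infty(S^2) \rtimes \R$ built from the flow $t \mapsto \sigma_{\gamma(t)}$ on $S^2$ fails to be locally exponential; the failure of local exponentiality for this subgroup then passes to the BMS group by naturality of the Lie group exponential.

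For $\gamma$ I would take a one-parameter group of boosts. After conjugating inside $\PSL(2,\bC)$ so that the two fixed points of the boost on the celestial sphere are $0$ and $\infty$, the induced flow on $S^2$ is $\Phi_t(z) = e^{t} z$ (so in particular $\Phi_t(\infty) = \infty$). It has exactly two fixed points, and the orbit through any other point $z_0$ is the open meridian $\{ w \in S^2 : \arg w = \arg z_0 \}$ joining $0$ to $\infty$ with these two endpoints deleted. Such an orbit is non-periodic, since $e^{t} z_0 = z_0$ forces $t = 0$, and it is locally closed, being the complement of two points in its closure, the corresponding closed meridian. The criterion obtained above --- that a smooth flow on a compact manifold possessing a non-periodic, locally closed orbit gives rise to a non-locally-exponential group $C^\infty(M) \rtimes \R$ --- thus applies to $\Phi$ on $M = S^2$, and the theorem follows.

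The main obstacle, as I see it, lies entirely in the first step: one must check that the BMS group genuinely falls within the scope of our criteria, the delicate point being the conformal weight that supertranslations carry as sections of a density bundle over $S^2$, so that the Lorentz action is --- or, after trivializing this bundle, reduces to a case handled by the same argument as --- a pullback action on $C^\infty$ of a \emph{compact} manifold; here one leans on the Lie-theoretic analysis of \cite{PaS22}. Everything downstream of that identification is soft, a boost on the $2$-sphere being about the simplest possible source of a non-periodic locally closed orbit; alternatively one could use any loxodromic one-parameter subgroup of $\PSL(2,\bC)$ --- a boost composed with a rotation about the same axis --- whose orbits spiral from one fixed point to the other and are likewise non-periodic and locally closed.
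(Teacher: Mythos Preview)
Your reduction strategy is right in spirit but has a genuine gap at exactly the point you flag as ``delicate''. The BMS action of $\Lor$ on the supertranslations is \emph{not} the bare pullback $\alpha_g(F)=F\circ\sigma_g$: it carries the conformal cocycle $J_g$, so that $(\sigma_{g^{-1}}f)(x)=J_g(x)\,f(g.x)$ (this is \Cref{eq:sigma-sphere}). For a one-parameter group of boosts the factor $J_{\exp(tx)}$ is nontrivial, so the subgroup you obtain inside $\BMS$ is \emph{not} of the form $C^\infty(S^2)\rtimes_\alpha\R$ with $\alpha_t f=f\circ\sigma_t$, and \Cref{th:3.10}\Cref{item:th:3.10:nonperlcl} does not apply as stated. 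Your remark that one can ``trivialize the density bundle'' does not help: the bundle is of course trivial, but trivializing it is precisely what produces the cocycle $J_g$ on functions; you would need $J_{\exp(tx)}$ to be a coboundary along the flow, and you have not shown this (nor is it obvious for a boost with its two fixed points of opposite character).

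The paper sidesteps this entirely by choosing a \emph{rotation} $x\in\mathfrak{so}_d(\R)\subseteq\mathfrak{so}_{1,d+1}(\R)$ rather than a boost. Rotations preserve the round sphere isometrically, so $J_{\exp(tx)}\equiv 1$ and the restricted action genuinely is the pure pullback. The resulting flow on $S^2$ is then periodic, with orbit closures that are circles (tori of dimension one) carrying linear flows, and one invokes the torus/eigenvalue criterion (\Cref{th:3.10}\Cref{item:th:3.10:tor}, or directly \Cref{cor:2.2} via \Cref{prop:conf_non_exp}) rather than the locally-closed-orbit criterion you proposed. So the fix is small --- swap the boost for a rotation --- but it changes both which one-parameter group you use and which half of \Cref{th:3.10} does the work.
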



This settles the non-exponentiality conjecture on the BMS groups from \cite[Conjecture 3.15]{PaS22} in the affirmative. A more detailed discussion of the relevance of the non-local exponentiality of the BMS group and our other results will be given in the section at the end of this introduction below. While the results for semidirect products of Lie group actions on spaces of smooth functions are new and yield interesting results, we were not able to completely characterize non-local exponentiality of the resulting groups. A summary and aggregate of \Cref{th:3.10} and \Cref{cor:surj.on.s1} reads as follows.


\begin{thmN}
  Let $(M,\sigma)$ be a smooth flow on a compact manifold. The Lie group $C^\infty(M) \rtimes_{\alpha} \R$ is not locally exponential if any of the following conditions holds:
  \begin{enumerate}[(1),wide]
  \item The flow has at least one non-periodic, locally closed orbit. 
  \item The flow restricts to a linear non-trivial one on a torus embedded in $M$.

  \item $M$ admits an equivariant submersion onto a fixed-point-free flow on $\bS^1$.

  \item $M$ admits an equivariant submersion onto a linear flow on a torus. 
  \end{enumerate}
\end{thmN}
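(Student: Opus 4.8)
The strategy is to sort the four conditions by provenance. Conditions (1) and (2) are precisely the two cases of \Cref{th:3.10}, and (3) is \Cref{cor:surj.on.s1} (since a fixed-point-free flow on $\bS^1$ is, after a smooth equivariant change of coordinate, the constant-speed flow of some period $p>0$, so no separate argument is needed). The only genuinely new step is the reduction of (4) to (3), together with the naturality principle that glues everything. As explained in the introduction, by passing to one-parameter subgroups it suffices throughout to exhibit singular points of the Lie group exponential of $C^\infty(M)\rtimes_\alpha\R$ accumulating at the identity.

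The first thing I would record is the dictionary between singular points and a family of operators. For a flow $\sigma$ on a compact $M$ one has $\exp(F,t)=(\Phi_M(t)F,\,t)$ with $\Phi_M(t):=\int_0^1\alpha_{ut}\,\dd u\in\End(C^\infty(M))$ (where $\alpha_s F=F\circ\sigma_s$), and a direct differentiation shows that $d\exp_{(F,t)}$ is a topological isomorphism if and only if the continuous operator $\Phi_M(t)$ is bijective (a continuous linear bijection of the \Frechet{} space $C^\infty(M)$ being automatically bicontinuous); in particular this is independent of $F$. Hence $C^\infty(M)\rtimes_\alpha\R$ fails to be locally exponential as soon as there is a sequence $t_n\to0$ with $\ker\Phi_M(t_n)\neq 0$. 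This dictionary is established in the body of the paper, so I would simply invoke it.

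Next, the transfer lemma. Let $\pi\colon M\to N$ be a $\sigma$-equivariant submersion of flows. Then $\pi^*\colon C^\infty(N)\to C^\infty(M)$ is continuous and injective (injectivity because $\pi$ is onto), and $\pi\circ\sigma^M_s=\sigma^N_s\circ\pi$ gives $\alpha^M_s\circ\pi^*=\pi^*\circ\alpha^N_s$, whence $\Phi_M(t)\circ\pi^*=\pi^*\circ\Phi_N(t)$ for every $t$. Therefore a nonzero $g\in\ker\Phi_N(t)$ yields the nonzero element $\pi^*g\in\ker\Phi_M(t)$, so any sequence $t_n\to0$ witnessing non-local exponentiality of $C^\infty(N)\rtimes\R$ does the same for $C^\infty(M)\rtimes\R$. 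Note that only injectivity of $\pi^*$ enters: no equivariant section, retraction, or complement of $\pi^*C^\infty(N)$ is needed, which is what makes this clean.

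Finally I would apply this. For the circle base case one checks that in the constant-speed coordinate $\theta$ of period $p$ the functions $\cos(n\theta)$ and $\sin(n\theta)$ lie in $\ker\Phi_{\bS^1}(p/n)$ and $p/n\to0$ — i.e. (3), as in \Cref{cor:surj.on.s1}. For (4): a non-trivial linear flow on $\bT^k$ has frequency vector $\omega\neq0$; fixing a coordinate $j$ with $\omega_j\neq0$, the coordinate projection $\bT^k\to\bS^1$ is a submersion and is equivariant onto the constant-speed circle flow of frequency $\omega_j$, which is fixed-point-free. Composing with the given equivariant submersion $M\to\bT^k$ — composites of equivariant submersions are again equivariant submersions — puts $M$ into case (3), and the transfer lemma concludes. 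The only points here that deserve care are the $\Phi_M(t)$-dictionary (handled earlier), the intertwining identity $\Phi_M(t)\pi^*=\pi^*\Phi_N(t)$, and the verification that a torus with a non-trivial linear flow admits an equivariant submersion onto a fixed-point-free circle flow; I expect none of these to be a serious obstacle, the real difficulty of the whole circle of ideas residing in \Cref{th:3.10} and \Cref{cor:surj.on.s1}, which are proved separately.
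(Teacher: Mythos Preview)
Your proposal is correct, and for items (1)--(3) it coincides with the paper's own treatment: the statement is explicitly introduced as an aggregate of \Cref{th:3.10} and \Cref{cor:surj.on.s1}, and you cite exactly those.

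For item (4) there is a small but genuine difference. You seem not to have noticed that \Cref{cor:surj.on.s1} already covers the torus target as well as the circle target; the paper handles (4) there directly, invoking \Cref{pr:factor.strng.inj} (transfer of ``not a topological embedding'' along $\pi^*$) together with the detailed analysis of linear torus flows in \Cref{th:lin.flows.precise.inv}. Your route instead reduces (4) to (3) by composing the given submersion $M\to\bT^k$ with a coordinate projection $\bT^k\to\bS^1$ onto a nonzero-frequency factor, and your transfer lemma moves \emph{kernel elements} (non-injectivity) rather than the failure of topological embedding. Both are valid; your version is more elementary, since it bypasses the Liouville-number discussion of \Cref{th:lin.flows.precise.inv} entirely and needs only the explicit eigenfunctions $\cos(n\theta),\sin(n\theta)$ on the circle. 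The paper's route, on the other hand, yields the finer information that $\beta_s$ fails to be a topological embedding for a \emph{dense} set of $s$, not merely for the discrete sequence $t_n=p/n$ your argument produces. For the bare statement of the theorem your shortcut suffices.
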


Thus we have reason to conjecture the following general statement.

\begin{conjN}\label{cjn:main}
  For every smooth non-trivial flow $\sigma \colon \R \times M\to M$ on a compact manifold $M$, the Lie group $C^\infty(M,\R) \rtimes_\alpha \R$, specified by $\alpha_t(f) \coloneq f \circ \sigma_t$, is not locally exponential.  More specifically, there is a sequence $t_n \in \R_+$ with $t_n \to 0$ such that the points $(0,t_n)$ are singular for the exponential function.
\end{conjN}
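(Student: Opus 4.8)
\medskip
\noindent\textit{Strategy.}
The plan is to recast \Cref{cjn:main} as a statement about flow-invariant distributions on $M$. As in \cite{Neeb06} and the computations behind \Cref{th:3.10} and \Cref{cor:surj.on.s1}, the differential of the exponential function of $C^\infty(M,\R)\rtimes_\alpha\R$ at a point $(0,t)$ is, up to left translation, the direct sum of the identity on the $\R$-summand with $\tfrac1t$ times the averaging operator
\[
  A_t\colon C^\infty(M,\R)\longrightarrow C^\infty(M,\R),\qquad A_t f=\int_0^t f\circ\sigma_s\,\dd s
\]
(the standard shape of $d\exp$ on a semidirect product $V\rtimes\R$; equivalently $\tfrac1t A_t$ is the entire function $z\mapsto\frac{1-e^{-z}}{z}$, with zero set $2\pi i\Z\setminus\{0\}$, applied to the generator $t\cL$ of the $\R$-action on the abelian ideal, $\cL f=\frac{d}{ds}\big|_0 f\circ\sigma_s$). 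Hence $(0,t)$ is singular for $\exp$ exactly when $A_t$ fails to be a linear topological isomorphism of the \Frechet{} space $C^\infty(M,\R)$, and the conjecture becomes: for every non-trivial flow there is a sequence $t_n\to 0^+$ with $A_{t_n}$ not invertible. That such $t_n$ can exist at all is the \Frechet{}-space phenomenon recalled in the introduction — $A_t\to\id$ strongly as $t\to 0$, yet the invertible operators are not a neighbourhood of $\id$ — so the obstruction will live at frequency tending to infinity.

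\medskip
\noindent\textit{The eigendistribution mechanism.}
A clean sufficient condition is a sequence of nonzero distributions $\ell_n$ on $M$ together with reals $\omega_n$, $|\omega_n|\to\infty$, such that $\ell_n\circ\alpha_s=e^{i\omega_n s}\ell_n$ for all $s$ (distributional eigenvectors of $\sigma$ with imaginary eigenvalues $\to\infty$). Indeed then $\ell_n\circ A_t=\frac{e^{i\omega_n t}-1}{i\omega_n}\,\ell_n$, which vanishes once $\omega_n t\in 2\pi\Z$; setting $t_n:=2\pi/\omega_n\to 0$, the real or imaginary part of $\ell_n$ annihilates $\im(A_{t_n})$, so $A_{t_n}$ is not onto and $(0,t_n)$ is singular. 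Thus the problem reduces to manufacturing imaginary eigendistributions of unbounded frequency — or, where that is unavailable, to proving directly that $A_{t_n}$ is not surjective, or exhibiting a nonzero smooth function in $\ker A_{t_n}$.

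\medskip
\noindent\textit{The tame cases and reduction to a minimal set.}
For the flows of \Cref{th:3.10}(2)--(4) the relevant eigendistributions are already implicit in the cited proofs (Fourier modes along the circle or torus factor, pulled back along the equivariant submersion), and a non-periodic locally closed orbit is covered by \Cref{th:3.10} directly. If $\sigma$ has a non-trivial periodic orbit $\gamma$ of period $p$, then for every $k\ge 1$ the functional $f\mapsto\int_{\R/p\Z}f(\sigma_\theta x_0)\,e^{-2\pi ik\theta/p}\,\dd\theta$ ($x_0\in\gamma$) is a nonzero eigendistribution with $\ell_k\circ\alpha_s=e^{2\pi iks/p}\ell_k$ — nonzero because $\gamma$ is an embedded circle and smooth functions on it extend to $M$ — yielding singular times $p/k\to 0$. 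If $\sigma$ has a fixed point $p$ whose linearization $(dX)_p$ has a purely imaginary eigenvalue $i\omega\neq 0$ with eigenvector $v$, then the generator of $\alpha_s$ on the finite-dimensional space of jets at $p$ has $\pm ik\omega$ among its eigenvalues for each $k\ge1$ (take $v^{\otimes k}$ in the $k$-th symmetric power), hence a genuine eigenvector there — a distribution supported at $p$ — again giving $t_k=2\pi/(k\omega)\to 0$. Finally, an arbitrary non-trivial $\sigma$ has a nonempty minimal closed invariant subset $K$: if $K$ is a point we are in the fixed-point case; if $K$ contains a periodic orbit it equals it; and if $K$ is an embedded torus carrying a non-trivial linear flow we invoke \Cref{th:3.10}(2). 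The residual case is a non-trivial \emph{minimal} flow with neither a circle/torus rotation factor nor a purely imaginary linearization direction.

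\medskip
\noindent\textit{The main obstacle.}
This residual case is precisely why \Cref{cjn:main} is only a conjecture, and I expect it to be the genuine difficulty. Two paradigms: the horocycle flow on a compact $\Gamma\backslash\PSL(2,\R)$, which is minimal and mixing and so has no nonzero imaginary $L^2$-eigenfunctions; and Cherry-type flows on $\bT^2$, whose only stationary point is a hyperbolic saddle and whose recurrent set is a nowhere-dense quasi-minimal set. In both, the eigendistribution mechanism produces nothing on the imaginary axis, and the natural substitutes — the Flaminio--Forni invariant distributions for parabolic flows — have eigenvalue $0$, which does \emph{not} obstruct $A_t$ (an invariant distribution $D$ satisfies merely $D\circ A_t=tD$). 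One is then forced to prove non-surjectivity of $A_{t_n}$ directly, equivalently that the flow-coboundary space $\im(\cL)\subseteq C^\infty(M,\R)$ is strictly smaller than the time-$t_n$-coboundary space $\im(\alpha_{t_n}-\id)=\im(\cL\circ A_{t_n})$ for a suitable $t_n\to 0$; the loss of Sobolev regularity in the cohomological equation for such flows makes this believable, but a proof seems to require small-divisor estimates or representation-theoretic input specific to the flow, and no mechanism uniform over all smooth flows on all compact manifolds is presently in sight. Everything else — the tame-case bookkeeping and the choice of $t_n$ — is routine by comparison.
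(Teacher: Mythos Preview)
The statement you are attempting is \emph{Conjecture~C} in the paper: it is explicitly left open, and the paper offers no proof. The authors write that they ``were not able to prove an analogous criterion for the non-periodic case'' and record the assertion only as a conjecture supported by the partial results collected in Theorem~B (\Cref{th:3.10} and \Cref{cor:surj.on.s1}). There is thus no ``paper's own proof'' to compare against, and you correctly recognise this, flagging the residual case of a genuinely aperiodic minimal flow (horocycle-type, Cherry-type) as the real obstruction and declining to claim a full argument.

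Your tame-case bookkeeping largely recaptures the paper's partial results: a non-trivial periodic orbit is a $1$-torus carrying a linear flow, so already falls under \Cref{th:3.10}\Cref{item:th:3.10:tor}, and the eigendistribution mechanism you describe is exactly the eigenfunctional argument of \Cref{prop:2} combined with \Cref{re:deriv}. The jet-eigendistribution observation at a fixed point whose linearization has a nonzero purely imaginary eigenvalue is a pleasant addition not present in the paper. One slip in your trichotomy, however: when the minimal set $K$ reduces to a single fixed point $p$ but $(dX)_p$ has \emph{no} nonzero purely imaginary eigenvalue (hyperbolic or nilpotent linearization), the jet mechanism yields nothing on the imaginary axis, yet you write ``if $K$ is a point we are in the fixed-point case'' as though that case were settled. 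Such a $p$ must either be routed through \Cref{th:3.10}\Cref{item:th:3.10:nonperlcl} by locating a nearby non-periodic locally closed orbit, or else honestly absorbed into your residual bin. Likewise, a minimal set that merely \emph{happens} to be diffeomorphic to a torus need not carry a \emph{linear} flow, so the appeal to \Cref{th:3.10}\Cref{item:th:3.10:tor} in that branch is not automatic. None of this changes the verdict: the genuine gap is precisely the one you name, and the paper leaves it open as well.
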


Finally, we collect in Section 5 several results on the related case of semidirect products involving a Lie group acting on Banach spaces of functions of finite order differentiability. Also in this case the exponential map fails to be a local diffeomorphism. The physical relevance of these results is discussed in the next section.

\subsection*{Interpretation and consequences of our results for physics}

The BMS group has been proposed as a candidate for ``the symmetry group" of asymptotically flat spacetime. The original derivation of this group in \cite{BBM62,Sach62} was achieved via a gauge fixing approach. Later it was discovered that the BMS group can also be viewed as a group of conformal diffeomorphisms of a certain part of the boundary of the conformal compactification of spacetime. 
In \cite{PaS22} the basic infinite-dimensional Lie theory for the BMS group and some related groups was described. There it was uncovered that the BMS and related groups are not analytic Lie groups. This, together with the pathology uncovered in Theorem A of the present paper, shows that the interplay between the BMS group and its Lie algebra is more intricate than usually in finite dimensions. In particular, there is no sensible way to construct exponential coordinates for the BMS group. 

Since the BMS group is in a certain sense a minimal candidate for a symmetry group of asymptotically flat spacetimes, the complications in the correspondence of Lie algebra and associated Lie group should also be expected for the other groups proposed as candidates for asymptotic symmetry groups.
More specifically, several extensions, such as the Newman--Unti (NU) group, the generalised BMS group (gBMS), the extended BMS group (eBMS) (see e.g. \cite{PaS22} for a summary) and the conformal BMS group  (cBMS, \cite{cBMS17}) have been proposed (arising e.g.~through different choices in the gauge fixing procedure). 
The NU, gBMS and eBMS groups are constructed as pairs of global group objects and associated Lie algebras. These are also semidirect products. Beyond differences in gauge fixing, the symmetry groups discussed so far consider only the (future) null infinity part of the boundary and excise the corner parts of the conformal compactification (cf.~e.g.~\cite{AaR92}). Generalizations of the BMS group taking these parts of the compactification into account, can be expected to exhibit a semidirect product structure, whence we expect them not to be locally exponential.

Another problem is exemplified by the cBMS group. In \cite{cBMS17}, it is described via the generators of an infinite-dimensional Lie algebra. For this example and related proposed generalizations of the BMS group, our results imply that the passage to a global object will require a serious effort, as a  ``simple attempt to exponentiate the Lie algebra" will most probably not work. Refined tools, such as criteria as to when extensions of infinite-dimensional Lie algebras integrate to Lie group extensions might help to resolve the problem in the passage from infinitesimal to global level for asymptotic symmetry groups \cite{ne02, ne04, ne07}. 

Finally, we would like to point out that the results in Section \ref{se:extra} are not only of use to derive Theorem B, but also have significance for the physical theory. It has been discussed in the literature whether the BMS group should not be a semidirect product of the Poincar\'{e} group with a space $C^k(M)$ of $k$-times continuously differentiable functions for $2\leq k < \infty$. While \cite{PaS22} shows that in this case no Lie group structure is obtained, one might still wonder whether the topological group obtained via the construction still exhibits good mathematical properties useful for the physical theory, for instance 
as a half Lie group as in \cite{mn18}. As our results in Section \ref{se:extra} show, there is no hope to recover a well behaved analogue of the Lie group exponential in this setting. Hence we conclude that these groups can not be used to repair the pathology of the BMS group uncovered in Theorem A of the present paper.

\textbf{Acknowledgements} KHN acknowledges support by DFG-grant NE 413/10-2.


\section{Groups acting on vector spaces}\label{sect:GE}

In this section we consider first a general setup for a simple class of semidirect products arising from a group action on a locally convex vector space. Here and in the following we write \textit{locally convex space} as a shorthand for Hausdorff \textit{locally convex topological vector space}.

\begin{defn} 
  Let $G$ be a \textit{Lie group} \cite[Definition II.1.1]{Neeb06}, $E$ a locally convex space, and $\alpha \colon G \rightarrow \GL(E), g \mapsto \alpha_g$ a homomorphism for which the corresponding action map
  \begin{equation*}
    \alpha \colon G \times E \to E,\quad (g,v) \mapsto \alpha_g(v)
  \end{equation*}
  (denoted abusively by the same symbol) is smooth in the Bastiani sense. Here a map $f \colon E \supseteq U \rightarrow F$ on an open subset $U$ of a locally convex space $E$ is (Bastiani-)smooth if for every $x \in U, v_1, \ldots v_k \in E,  k\in \N_0$ the iterated $k$-fold directional derivatives $d^kf(x)(v_1,\ldots ,v_k)$ exist and yield continuous mappings $d^kf \colon U \times E^k \rightarrow F$, cf. \cite{Schm23}. The chain rule holds for Bastiani-smooth maps, hence manifolds, smooth functions on them and Lie groups can be defined in the usual way using charts.

  Interpreting the vector space $E$ as an abelian Lie group, we construct the semidirect product Lie group $E \rtimes_\alpha G$.  Its Lie algebra is the semidirect product algebra \break $\Lf(E \rtimes_\alpha G)=E \rtimes_{\Lf(\alpha)}\Lf(G)$ induced by the derived action, given for $x \in \Lf(G)$ by
  \begin{equation*} \Lf(\alpha)(x)v = T_e(\alpha^v)x \quad \mbox{ for } \quad
    \alpha^v(g) = \alpha_g(v).\end{equation*}
\end{defn}
For a Lie group $G$ we call a smooth function $\exp \colon \Lf(G) \to G$ an \textit{exponential function} if all curves $\gamma_x \colon \R \to G, \gamma_x(t) := \exp(tx)$, are one-parameter groups and $\gamma_x'(0) = x$. The existence of an exponential function is standard for Banach Lie groups, but there exist infinite-dimensional Lie groups without exponential function, see \cite[FP6]{Neeb06}.  We investigate the exponential function of the semidirect product ${E \rtimes_\alpha G}$. Let us recall the general form of the exponential function of such groups (see \cite[Ex.~II.5.9]{Neeb06}). 

Recall \cite[post Def.~5.2]{trev_tvs} that \textit{sequentially complete} topological vector spaces are those whose Cauchy sequences converge. This ensures the existence of integral operators such as \Cref{eq:beta_t} below (see the discussion following \cite[Def.~I.1.4]{Neeb06}). We assume locally convex spaces carrying actions sequentially complete unless explicitly stating otherwise.

\begin{lem}
  Suppose that $G$ has an exponential function $\exp_G \colon \Lf(G) \to G$, and that $E$ is sequentially complete, so that for $x \in \Lf(G)$ the integrals $\beta_x := \int_0^1 \alpha(\exp_G (sx))\dd s$ exist pointwise and define continuous linear maps on~$E$.  Then the exponential map of $E \rtimes_\alpha G$ is
  \begin{align}\label{exp_semidir}
    \exp(v,x) =\left(\beta_x v, \exp_G (x)\right)
  \end{align}
\end{lem}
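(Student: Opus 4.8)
This is, in essence, \cite[Ex.~II.5.9]{Neeb06}; the plan is to verify directly that the right-hand side of \eqref{exp_semidir} defines an exponential function, so that by uniqueness of the exponential function it must be \emph{the} exponential map of $E\rtimes_\alpha G$. Recall that multiplication in $E\rtimes_\alpha G$ is $(w,g)(w',g')=(w+\alpha_g(w'),\,gg')$, and that as a manifold $E\rtimes_\alpha G$ is just $E\times G$. Fixing $(v,x)\in \Lf(E\rtimes_\alpha G)=E\rtimes_{\Lf(\alpha)}\Lf(G)$, set $\gamma(t):=(\beta_{tx}(tv),\,\exp_G(tx))$; one must show that $\gamma$ is a one-parameter group with $\gamma'(0)=(v,x)$ and that $(v,x)\mapsto(\beta_x v,\exp_G(x))$ is Bastiani-smooth. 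First I would reparametrize the first slot: the substitution $u=st$ in the pointwise (Riemann) integral gives, for every $t\in\R$,
\[
  \beta_{tx}(tv)=t\int_0^1\alpha\bigl(\exp_G(stx)\bigr)v\,\dd s=\int_0^t\alpha\bigl(\exp_G(ux)\bigr)v\,\dd u=:\eta(t),
\]
a smooth $E$-valued curve with $\eta'(t)=\alpha(\exp_G(tx))v$ by the fundamental theorem of calculus.

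The heart of the argument is the cocycle identity $\eta(t_1)+\alpha_{\exp_G(t_1x)}\bigl(\eta(t_2)\bigr)=\eta(t_1+t_2)$, which together with $\exp_G(t_1x)\exp_G(t_2x)=\exp_G((t_1+t_2)x)$ yields $\gamma(t_1)\gamma(t_2)=\gamma(t_1+t_2)$. To prove it I would use, in turn, that $\alpha_{\exp_G(t_1x)}$ is continuous and linear, hence passes inside the integral; that $\alpha\colon G\to\GL(E)$ is a homomorphism, so $\alpha_{\exp_G(t_1x)}\alpha(\exp_G(ux))=\alpha(\exp_G((t_1+u)x))$; and the change of variables $w=t_1+u$:
\[
  \alpha_{\exp_G(t_1x)}\bigl(\eta(t_2)\bigr)=\int_0^{t_2}\alpha\bigl(\exp_G((t_1+u)x)\bigr)v\,\dd u=\int_{t_1}^{t_1+t_2}\alpha\bigl(\exp_G(wx)\bigr)v\,\dd w,
\]
which, added to $\eta(t_1)=\int_0^{t_1}\alpha(\exp_G(wx))v\,\dd w$, equals $\eta(t_1+t_2)$. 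Hence each $\gamma$ is a one-parameter group, and since the manifold is a product, $\gamma'(0)=(\eta'(0),x)=(\alpha_e v,x)=(v,x)$, using $\tfrac{\dd}{\dd t}\big|_{0}\exp_G(tx)=x$ on the second component.

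It remains to establish Bastiani-smoothness of $\exp$, which is the only genuinely technical step and the main obstacle. The $G$-component $(v,x)\mapsto\exp_G(x)$ is smooth by hypothesis. For the $E$-component $(v,x)\mapsto\beta_x v=\int_0^1\alpha\bigl(\exp_G(sx),v\bigr)\,\dd s$, the integrand $(s,v,x)\mapsto\alpha(\exp_G(sx),v)$ is Bastiani-smooth, being a composition of $\exp_G$, scalar multiplication and the smooth action map $\alpha$; one then appeals to the standard fact --- this is precisely where sequential completeness of $E$ enters, see the discussion after \cite[Def.~I.1.4]{Neeb06} --- that integrating a Bastiani-smooth family over the compact parameter interval $[0,1]$ again produces a Bastiani-smooth map, with all higher derivatives obtained by differentiating under the integral sign. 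Making this parameter-integral argument precise in the locally convex Bastiani framework is the one point requiring care; everything else is elementary. With smoothness in hand, and since a Lie group admits at most one exponential function, \eqref{exp_semidir} is confirmed to be the exponential map of $E\rtimes_\alpha G$.
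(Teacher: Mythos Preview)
Your proof is correct and follows exactly the approach the paper sketches: the paper simply asserts that one checks the curves $t\mapsto(\beta_{tx}(tv),\exp_G(tx))$ are one-parameter groups with tangent vector $(v,x)$ at $t=0$, and you carry out precisely that verification (including, additionally, the Bastiani-smoothness argument, which the paper leaves implicit).
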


This is  verified easily by showing that the curves $t \mapsto \exp(tv,tx)$ are one-parameter groups with tangent vector $(v,x)$ 
in $t = 0$.
In particular, if $G=\R$, we obtain as a special case
the exponential map of $E \rtimes_\alpha \R$:
\begin{align}\label{exp:1para}
  \func{\exp_{E \rtimes_\alpha \R}}{E \rtimes \R}{E \rtimes \R}{(v,t)}{(\beta_tv,t)}\\
  \text{with } \beta_t = \int_0^1 \alpha(st) \dd s = \frac{1}{t} \int_0^t \alpha(s) \dd s. \label{eq:beta_t}
\end{align}
In this case it suffices to study the integral operators $\beta_t$ to deduce whether the group $E \rtimes_\alpha \R$ is locally exponential or not.  We remark that \cite[Ex.~II.5.9 (a)]{Neeb06} gives an easy example in which the group $E \rtimes_\alpha \R$ fails to be locally exponential for an action
on a \textit{\Frechet\, space} $E$ (i.e.~a locally convex
completely metrizable space,  \cite[\S 10]{trev_tvs});
we shall see more below.
We fix some notation.

\begin{defn}
  Let $\alpha \colon \R \times E \rightarrow E, \alpha_t (v) := \alpha^v(t):= \alpha (t,v)$ be a left Lie group action on the locally convex vector space $E$. For the infinitesimal generator of $(\alpha_t)_{t \in \R}$ we write
  \begin{align}\label{inf-generator}
    D \coloneq \frac{d}{dt}\Big|_{t=0} \alpha_t.  
  \end{align}
\end{defn}
In the following we will study the invertibility of the operators
$\beta_t$. In this context, the relation
\begin{equation*}
  \frac{\alpha_t - \one}{t} = D \circ \beta_t 
\end{equation*}
will prove useful. It follows immediately from
\begin{equation}
  \label{eq:alpha-T-rel}
  \frac{\alpha_t - \one}{t}
  = \frac{1}{t} \int_0^t \frac{d}{ds} \alpha_s\, ds 
  = \frac{1}{t} \int_0^t D \alpha_s \, ds 
  = D \circ \beta_t.
\end{equation}

This discussion leads to:

\begin{prop}\label{prop:injec}
  For $T > 0$, the operator $\beta_T$ is not injective if and only if there exist $T$-periodic vectors which are not fixed by $(\alpha_t)_{t\in \bR}$.
\end{prop}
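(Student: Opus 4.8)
The plan is to read off both implications directly from the identity $\tfrac{\alpha_T-\one}{T}=D\circ\beta_T$ of \eqref{eq:alpha-T-rel}, together with the elementary companion relation $\beta_T\circ D=\tfrac{\alpha_T-\one}{T}$. The latter follows from $\tfrac{\dd}{\dd s}\alpha_s(v)=\alpha_s(Dv)$ for all $v\in E$ — itself immediate from the one-parameter group law $\alpha_{s+h}=\alpha_s\circ\alpha_h$ and the linearity and continuity of $\alpha_s$ — upon integrating over $[0,T]$ and invoking the fundamental theorem of calculus for curves valued in the sequentially complete space $E$: namely $T\,\beta_T(Dv)=\int_0^T\alpha_s(Dv)\,\dd s=\int_0^T\tfrac{\dd}{\dd s}\alpha_s(v)\,\dd s=\alpha_T(v)-v$. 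These two formulas will do all the work.

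For the ``if'' direction I would take a $T$-periodic vector $v$, i.e.\ $\alpha_T(v)=v$, that is not fixed by the flow, and put $w\coloneq Dv$. Then $T\,\beta_T(w)=\alpha_T(v)-v=0$, so $w\in\ker\beta_T$; and $w\neq0$, for if $Dv=0$ then $\tfrac{\dd}{\dd s}\alpha_s(v)=\alpha_s(Dv)=0$ for all $s$, whence $s\mapsto\alpha_s(v)$ is constant equal to $\alpha_0(v)=v$, contradicting that $v$ is not fixed. Hence $\beta_T$ is not injective. For the ``only if'' direction I would take $0\neq w\in\ker\beta_T$ and apply $D$ to the equation $\beta_T(w)=0$; by \eqref{eq:alpha-T-rel} this gives $\tfrac1T(\alpha_T(w)-w)=D(\beta_T(w))=0$, so $w$ is $T$-periodic. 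Finally $w$ is not fixed by the flow, since otherwise $\alpha_s(w)=w$ for all $s$ would give $\beta_T(w)=\tfrac1T\int_0^T\alpha_s(w)\,\dd s=w\neq0$, contradicting $w\in\ker\beta_T$. Thus $w$ is a $T$-periodic vector that is not fixed by $(\alpha_t)_{t\in\R}$, as desired.

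I do not expect a real obstacle: the statement is in essence a repackaging of \eqref{eq:alpha-T-rel}, and the same computation shows more precisely that every element of $\ker\beta_T$ is automatically $T$-periodic, while $Dv\in\ker\beta_T$ for every $T$-periodic $v$. The only places asking for mild care are the interchanges of differentiation and integration with the maps $\alpha_s$: these rely on sequential completeness of $E$ (already needed to make sense of $\beta_T$ and to establish \eqref{eq:alpha-T-rel}) and, where $D$ is applied, only on its linearity, so that $D(0)=0$ — no continuity of $D$ is actually needed, although it does hold since $\alpha$ is Bastiani-smooth.
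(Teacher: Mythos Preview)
Your proof is correct and takes a genuinely different, more elementary route than the paper. The paper proves both directions via the Peter--Weyl theorem: for the forward implication it restricts the flow to $\ker\beta_T$, obtains a $\bT_T$-representation, and decomposes into eigenspaces $F_m=\ker\big(D^2+\tfrac{4\pi^2m^2}{T^2}\one\big)$, then argues that $\beta_T|_{\ker D}=\id$ forces some $F_m$ with $m\neq 0$ to be nonzero; for the reverse implication it again appeals to Peter--Weyl to produce a nontrivial eigenspace $E_m$, $m\neq 0$, and then checks $E_m\subseteq\ker\beta_T$. You instead exploit the companion identity $\beta_T\circ D=\tfrac{\alpha_T-\one}{T}$ together with \eqref{eq:alpha-T-rel} to handle both implications by direct computation on a single vector, never leaving $E$ and never invoking compact-group representation theory. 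Your argument is shorter and more self-contained; the paper's approach, though heavier, yields as a byproduct the explicit eigenspace description of $\ker\beta_T$ (namely that it is topologically spanned by the $F_m$, $m\neq 0$), information that feeds into the subsequent \Cref{prop:eigenvalue_seq} and \Cref{re:deriv}.
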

\begin{proof} 
  Suppose first that $\beta_T$ is not injective.  In view of \Cref{eq:alpha-T-rel}, $\alpha_T - \one$ vanishes on $\ker\beta_T$.  Therefore the flow on the closed subspace $\ker \beta_T$ is $T$-periodic, so that we obtain a representation of the compact group $\T_T := \R/T\Z$.  The characters of this group are of the form $\chi_m \colon \T_T \to \T, \chi_m([t]) = e^{2\pi i m t/T}$, $m \in \Z$.  For an action on a sequentially complete real locally convex space $E$, applying the general Peter--Weyl Theorem \cite[Thm.~3.51]{HaM23} to the representation on the complexified space $E_\bC$, shows that $F := \ker \beta_T$ is topologically generated by the eigenspaces
  \begin{equation}
    \label{eq:D-ev}
    F_m := \ker\Big(D^2 + \frac{4\pi^2  m^2}{T^2} \one\Big), \quad m \in \Z \setminus \{0\}.    
  \end{equation}
  Actually, the Peter--Weyl Theorem first  implies
  that the eigenvectors span a dense subspace,
  but the eigenspaces $F_{\bC}(\chi_m)$
  combine to the corresponding real $D$-eigenspace
  \begin{equation*} F_m = F \cap (F_{\bC}(\chi_m) + F_\bC(\chi_{-m})). \end{equation*}
  In view of this relation, the inclusion $F_m \subseteq \ker \beta_T$ follows from the fact that every $\chi_m$-eigenvector 
  is killed by $\beta_T$:
  \[  \beta_T v = \frac{1}{T}\int_0^T \alpha_s v\, ds 
  = \frac{1}{T} \int_0^T e^{2\pi i m s/T} \, ds \cdot v 
  = 0 \cdot v = 0.\]
  
  Further $\beta_T|_{\ker D} = \id$ implies that some
  $F_m$ is non-zero. 
  We thus obtain $T$-periodic  vectors that are not fixed.

  Conversely, if $T$-periodic vector exists which is not fixed by the action, then one of the eigenspaces $E_m, m \not=0,$ as in \Cref{eq:D-ev} is non-trivial by the Peter--Weyl Theorem \cite[Thm.~3.51]{HaM23}.  In view of \Cref{eq:alpha-T-rel} and the injectivity of $D$ on each $E_m$, these eigenspaces are contained in $\ker(\beta_T)$.
\end{proof}

\begin{prop}\label{prop:eigenvalue_seq}
  If $i\lambda \in i \R^\times$, is an eigenvalue of $D$ on $E_\bC$, then $\beta_T$ is not injective for $T \in \frac{2\pi}{\lambda} \Z$. 
\end{prop}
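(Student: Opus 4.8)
The plan is to exhibit an explicit nonzero element of $\ker\beta_T$: an eigenvector of $D$ for the eigenvalue $i\lambda$ spans a one-parameter orbit on which the flow is the rotation $t\mapsto e^{i\lambda t}$, and then membership in $\ker\beta_T$ is read off from the elementary integral $\int_0^T e^{i\lambda s}\,ds$.

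\emph{Step 1: integrate the infinitesimal eigenvalue equation.} Fix $v\in E_\bC\setminus\{0\}$ with $Dv=i\lambda v$. Since $\alpha$ is a homomorphism and each $\alpha_t$ is continuous and linear, $D$ commutes with every $\alpha_t$: $D\alpha_t=\tfrac{d}{ds}\big|_{s=0}\alpha_s\alpha_t=\tfrac{d}{ds}\big|_{s=0}\alpha_{t+s}=\alpha_t\tfrac{d}{ds}\big|_{s=0}\alpha_s=\alpha_t D$. Hence the smooth curve $f(t):=\alpha_t v$ satisfies $f'(t)=D\alpha_t v=\alpha_t Dv=i\lambda f(t)$. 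Setting $h(t):=e^{-i\lambda t}f(t)$ gives $h'(t)=e^{-i\lambda t}\bigl(f'(t)-i\lambda f(t)\bigr)=0$; a $C^1$ curve in a Hausdorff locally convex space with identically vanishing derivative is constant (compose with continuous linear functionals, use the scalar mean value theorem, and invoke Hahn--Banach), so $h\equiv h(0)=v$, that is, $\alpha_t v=e^{i\lambda t}v$ for all $t\in\R$.

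\emph{Step 2: evaluate $\beta_T$ on $v$.} By \eqref{eq:beta_t}, for $T\neq 0$ one has
\[
  \beta_T v=\frac1T\int_0^T\alpha_s v\,ds=\Bigl(\frac1T\int_0^T e^{i\lambda s}\,ds\Bigr)v=\frac{e^{i\lambda T}-1}{i\lambda T}\,v,
\]
which vanishes precisely when $e^{i\lambda T}=1$, i.e.\ when $\lambda T\in 2\pi\Z$, equivalently $T\in\tfrac{2\pi}{\lambda}\Z\setminus\{0\}$. For such $T$ we get $0\neq v\in\ker\beta_T$ on $E_\bC$; writing $v=a+ib$ with $a,b\in E$ we obtain $\beta_T a=\beta_T b=0$ with $(a,b)\neq(0,0)$, so $\beta_T$ fails to be injective already on $E$. (Equivalently, Step~1 shows $v$ is $\tfrac{2\pi}{\lambda}$-periodic for the flow while $\alpha_{\pi/\lambda}v=-v\neq v$ since $\lambda\neq 0$, so $v$ is periodic but not fixed, and \Cref{prop:injec} applies directly.)

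\emph{Main obstacle.} There is essentially no hard step here. The only points deserving care are the passage from the infinitesimal identity $Dv=i\lambda v$ to the global one $\alpha_t v=e^{i\lambda t}v$ in a merely sequentially complete locally convex setting, which is handled by the vanishing-derivative argument above, and the descent of non-injectivity from the complexification $E_\bC$ to $E$, which is immediate by taking real and imaginary parts. One should also note that $T=0$ must be excluded, since $\beta_0=\id$.
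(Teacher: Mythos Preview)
Your proof is correct and follows essentially the same approach as the paper: pick an eigenvector $v$, use $\alpha_t v=e^{i\lambda t}v$ to compute $\beta_T v$ as an elementary exponential integral, see that it vanishes for $T\in\frac{2\pi}{\lambda}\Z\setminus\{0\}$, and descend to real and imaginary parts. The paper's argument is terser---it takes the identity $\alpha_t v=e^{i\lambda t}v$ for granted, whereas you supply the ODE/Hahn--Banach justification in Step~1---but the substance is the same.
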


\begin{proof}
  Let $v = v_1 + i v_2 \in E_\bC$ be a corresponding eigenvector. Then
  \begin{equation}\label{eq:betatv}
    \beta_T v
    =
    \int_0^T e^{i\lambda s}v\, ds
    = \frac{1}{i\lambda}(e^{i\lambda T} - 1)v,
  \end{equation}
  which vanishes for $0 \not=T \in \frac{2\pi}{\lambda} \Z$, so that $\beta_T(v_1) = \beta_T(v_2) = 0$.
\end{proof}

Now applying the observation of the previous proposition to $1$-parameter groups obtained from an arbitrary Lie group action we deduce the following.

\begin{cor} \label{cor:2.2} Let $G$ be a Lie group acting from the left on a locally convex vector space $E$. If there exists a $1$-parameter group in $G$ such that the infinitesimal generator $D$ of its action on $E_\bC$ admits an unbounded sequence of purely imaginary eigenvalues, then $E \rtimes_\alpha G$ is not locally exponential.
\end{cor}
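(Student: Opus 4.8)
The plan is to reduce everything to a one-parameter subgroup and then read off the conclusion from \Cref{prop:eigenvalue_seq} together with the explicit exponential formula \Cref{exp:1para}. By hypothesis there is a one-parameter group $\gamma\colon\R\to G$, necessarily nonconstant (a constant $\gamma$ acts trivially, forcing $D=0$, which has no nonzero eigenvalue), whose induced action on $E_\bC$ has generator $D$ admitting purely imaginary eigenvalues $i\lambda_n\in i\R^\times$ with $\sup_n|\lambda_n|=\infty$. As recalled in the introduction, $\gamma$ produces a Lie subgroup with inclusion $\iota\colon E\rtimes_\alpha\R\to E\rtimes_\alpha G$, $(v,t)\mapsto(v,\gamma(t))$, intertwining the two exponential functions where they are defined; the induced linear map $\Lf(\iota)$ is $(w,s)\mapsto(w,s\gamma'(0))$, which is injective since $\gamma'(0)\neq 0$. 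Differentiating the naturality identity $\exp_{E\rtimes_\alpha G}\circ\Lf(\iota)=\iota\circ\exp_{E\rtimes_\alpha\R}$ at a point $p$ and using injectivity of $\Lf(\iota)$ then shows that if $p$ is singular for $\exp_{E\rtimes_\alpha\R}$, its image $\Lf(\iota)(p)$ is singular for $\exp_{E\rtimes_\alpha G}$. So it suffices to produce singular points of $\exp_{E\rtimes_\alpha\R}$ converging to $0$.

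Passing to a subsequence we may assume $|\lambda_n|\to\infty$, and we set $T_n:=2\pi/|\lambda_n|$, so that $T_n\in\tfrac{2\pi}{\lambda_n}\Z$ and $T_n\to 0$. By \Cref{prop:eigenvalue_seq} the operator $\beta_{T_n}$ of \Cref{eq:beta_t} is not injective, so we may pick $0\neq w_n\in\ker\beta_{T_n}$. Since the exponential map of $E\rtimes_\alpha\R$ is $(v,t)\mapsto(\beta_t v,t)$ by \Cref{exp:1para}, differentiating at $(0,T_n)$ gives
\begin{equation*}
  d\exp_{E\rtimes_\alpha\R}(0,T_n)(w,s)=(\beta_{T_n}w,\,s),
\end{equation*}
the term coming from the $t$-dependence of $\beta_t$ dropping out because it is applied to the zero vector. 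Hence $(w_n,0)$ is a nonzero kernel vector of this differential, so $(0,T_n)$ is singular for $\exp_{E\rtimes_\alpha\R}$.

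By the transfer of the first paragraph, the points $\Lf(\iota)(0,T_n)=(0,T_n\gamma'(0))$ are then singular for $\exp_{E\rtimes_\alpha G}$, and they converge to $0$ in the Lie algebra since $T_n\to 0$. Therefore $\exp_{E\rtimes_\alpha G}$ restricts to a diffeomorphism on no neighbourhood of $0$, i.e.\ $E\rtimes_\alpha G$ is not locally exponential (and if it has no exponential function at all, the same conclusion holds trivially). There is no genuine obstacle here: granting the reduction to one-parameter subgroups, the statement is a formal consequence of \Cref{prop:eigenvalue_seq} and \Cref{exp:1para}; the only point needing a word of care is that the subgroup inclusion pushes singular points of the exponential forward, which is the content of the naturality argument above.
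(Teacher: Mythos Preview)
Your proof is correct and follows the same approach as the paper, which simply records the corollary as an immediate consequence of \Cref{prop:eigenvalue_seq} applied to a one-parameter subgroup together with the naturality remark from the introduction; you have merely made explicit the transfer-of-singularities argument and the differential computation at $(0,T_n)$ that the paper leaves implicit.
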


\begin{prop}\label{prop:2}
  If there exists a continuous linear eigenfunctional $\nu \colon E_\bC \to \bC$ which is $T$-periodic and not fixed by $(\alpha_t)_t$, then $\beta_T$ is not surjective.
\end{prop}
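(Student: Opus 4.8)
The plan is to produce a nonzero continuous functional that annihilates $\im \beta_T$: such a functional cuts out a proper closed subspace of $E$ containing the image of $\beta_T$, which is exactly the failure of surjectivity. The obvious candidate is $\nu$ itself (restricted to $E \subseteq E_\bC$), so the whole argument reduces to checking the identity $\nu \circ \beta_T = 0$.

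First I would pass from the eigenfunctional relation $\nu \circ D = \mu\,\nu$ (for some $\mu \in \bC$, with $D$ the infinitesimal generator, see \Cref{inf-generator}) to the flow level. For fixed $v \in E_\bC$ the scalar function $g(s) := \nu(\alpha_s v)$ is smooth, and since $D$ is continuous and commutes with the flow, $g'(s) = \nu(\alpha_s D v) = \nu(D \alpha_s v) = \mu\, g(s)$; solving this scalar linear ODE gives $g(s) = e^{\mu s}\,\nu(v)$, i.e.\ $\nu \circ \alpha_s = e^{\mu s}\,\nu$ for all $s$. The hypothesis that $\nu$ is $T$-periodic then forces $e^{\mu T} = 1$, so $\mu = i\lambda$ with $\lambda \in \frac{2\pi}{T}\Z$; the hypothesis that $\nu$ is not fixed by $(\alpha_t)_t$ rules out $\lambda = 0$. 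Thus $\mu = i\lambda$ with $\lambda \in \frac{2\pi}{T}(\Z \setminus \{0\})$, and in particular $\nu \neq 0$.

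Next, for $v \in E$, I would compute — pulling the continuous linear functional $\nu$ through the $E$-valued integral defining $\beta_T$, which is legitimate precisely because $\nu$ is continuous (the same mechanism that makes $\beta_T$ well defined, cf.\ the discussion preceding \Cref{eq:beta_t}) —
\[
  \nu(\beta_T v) \;=\; \frac{1}{T}\int_0^T \nu(\alpha_s v)\,ds
  \;=\; \frac{1}{T}\Big(\int_0^T e^{i\lambda s}\,ds\Big)\nu(v)
  \;=\; \frac{1}{T}\cdot\frac{e^{i\lambda T}-1}{i\lambda}\,\nu(v) \;=\; 0,
\]
mirroring the computation in the proof of \Cref{prop:eigenvalue_seq}. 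Hence both continuous real functionals $\mathrm{Re}(\nu|_E)$ and $\mathrm{Im}(\nu|_E)$ annihilate $\im\beta_T$, and at least one of them is nonzero since $\nu \neq 0$; therefore $\im\beta_T$ is contained in a proper closed subspace of $E$, and $\beta_T$ is not surjective.

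I do not expect a genuine obstacle. The only slightly delicate point is the first step, upgrading $\nu \circ D = \mu\nu$ to $\nu \circ \alpha_s = e^{\mu s}\nu$: one must justify differentiating under $\nu$ and that the scalar linear ODE has the expected solution, but this is routine given Bastiani smoothness of the action ($s \mapsto \alpha_s v$ is smooth with derivative $s \mapsto \alpha_s Dv$, and $\nu$ is continuous and linear). Alternatively, one may simply read ``$T$-periodic eigenfunctional not fixed by $(\alpha_t)_t$'' as the statement $\nu \circ \alpha_s = e^{i\lambda s}\nu$ with $\lambda \in \frac{2\pi}{T}(\Z\setminus\{0\})$, in which case only the displayed computation remains.
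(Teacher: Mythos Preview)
Your proof is correct and follows essentially the same route as the paper. The paper takes your ``alternative'' reading directly: it interprets a $T$-periodic eigenfunctional not fixed by the flow as one satisfying $\nu\circ\alpha_t = e^{2\pi i n t/T}\nu$ with $n\in\bZ\setminus\{0\}$, and then performs exactly your displayed integral computation to get $\nu(\beta_T v)=0$; your ODE upgrade from $\nu\circ D=\mu\nu$ to the flow level and your explicit passage to $\mathrm{Re}(\nu|_E)$, $\mathrm{Im}(\nu|_E)$ are careful elaborations the paper leaves implicit.
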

\begin{proof}
  Indeed, any such eigenfunctional will annihilate the range of $\beta_T$ by \Cref{eq:betatv}: choose $n \in \Z$ such that $\nu \circ \alpha_t = e^{2\pi i n t/T} \nu$ for $t \in \R.$ By assumption, $n \not=0$, so that
  \begin{equation*}
    \nu(\beta_T(v))= \int_0^T e^{2\pi i n t/T}\, dt \cdot \nu(v) = 0
  \end{equation*}
  implies the assertion.
\end{proof}

\begin{rem}\label{re:deriv}
  If $E$ is a complex algebra and $D$ a derivation, then $Dv = i\lambda v$, and thus
  \begin{equation*}
    Dv^n = n i\lambda v^n.
  \end{equation*}
  If all powers $v^n$ are non-zero, then all numbers $n i \lambda$ are eigenvalues of $D$, and therefore $E \rtimes_\alpha G$ is not locally exponential.
\end{rem}

\section{Semidirect products with spaces of smooth functions}\label{se:semidir-smth}

We work with spaces $C^k(M)\coloneq C^k(M,\R)$, $0\le k\le\infty$ of functions on smooth (i.e.~$C^{\infty}$) finite-dimensional manifolds $M$, with or without a boundary (see \Cref{res:bdry}\Cref{item:res:bdry:bdry}). Recall that for a finite-dimensional manifold $N$, the compact-open topology on $C^0(N,\R^n)$ is the locally convex topology induced by the seminorms $\lVert f\rVert_K \coloneq \sup_{x \in K} \lVert f(x)\rVert$, where $K \subseteq N$ is compact. Then the compact-open $C^k$-topology, is the locally convex topology turning the map
\begin{equation}\label{emb:Ck_top}
  C^k (M) \rightarrow \prod_{0 \leq p \leq k} C^0(T^p M, T^p\R)
  \cong \prod_{0 \leq p \leq k} C^0(T^p M, \R^{2^p}),\quad f \mapsto \left(T^p f\right)_{0\leq p \leq k}
\end{equation}
into a topological embedding, where the spaces on the right are endowed with the compact-open topology, cf.\ \cite[5.1]{Neeb14}. 
Localizing with the help of charts as in \cite[Lemma 2.5]{Schm23}, we exploit that, on open subsets of euclidean space, the toplogy coincides with the topology of uniform convergence on compact sets of functions and derivatives up to order $k$. By \cite[Ex.~10.1]{trev_tvs} 
$C^k(M)$ is a Fr\'echet space.

For compact $M$, the space $C^k(M)$ with finite $k$, will be Banach spaces with respect to the compact open $C^k$-topology. In particular, $\|\cdot\|_0$ will denote the supremum norm on $M$. For $k>0$ we note that norms inducing this topology measure the supremum of a function and its derivatives up to order $k$. Write $\|\cdot\|_r$ for a norm inducing the compact open $C^r$-topology on $C^k(M)$, $0 \leq r \leq k$. Adjusting choices, we may assume that 
$$\|f\|_r \leq \|f\|_s \text{ for all } r \leq s, f\in C^k(M).$$   

Moreover, $C^{\infty}(M)$ is in any case (\cite[\S 1.46]{rud_fa} or \cite[Prop.~34.4]{trev_tvs}) a Fr\`echet and \textit{Montel} space (an \textit{(FM)-space} as in \cite[\S 27.2]{k_tvs-1}) in the sense of \cite[Def.~34.2]{trev_tvs}, i.e. it is a \textit{barreled} Fr\`echet space \cite[Def.~33.1]{trev_tvs} such that closed bounded subsets are compact.


Consider a smooth action (or \textit{flow})
\begin{equation*}
  \bR\times M
  \ni
  (t,p)
  \xmapsto{\quad}
  \sigma_t(p) =: \sigma^p(t)
  \in M
\end{equation*}
on a compact smooth manifold $M$, with associated \textit{infinitesimal generator} \cite[Thm.~9.12]{lee2013introduction} $X$ (a smooth vector field on $M$):
\begin{equation}  \label{eq:xalpha}
  X_p = \frac{d}{dt}\bigg|_{t=0}\sigma_t(p)\in T_pM
  ,\quad
  p\in M.
\end{equation}
The orbits $\sigma_{\bR}(p)$ are precisely the \textit{integral curves}
of $X$ \cite[pre Ex.~9.1]{lee2013introduction}.

The action of $\bR$ on $M$ induces one on each $C^k(M)$, $0\le k\le \infty$ defined by
\begin{equation}\label{eq:act.on.ck}
  \bR\ni t
  \xmapsto{\quad}
  \alpha_t\in \Aut(C^k(M))
  ,\quad
  \alpha_tf:=f\circ\sigma_t.
\end{equation}
The material in \Cref{sect:GE} then applies to $E:=C^k(M)$ (we will be mostly interested in $C^{\infty}$) carrying the $\bR$-action $\alpha$. 

\begin{rems}\label{res:bdry}
  \begin{enumerate}[(1),wide]
  \item\label{item:res:bdry:bdry} Whether smooth manifolds $M$ are allowed a boundary $\partial M$ will make little difference to the discussion, so we will not revisit the issue: flows on $M$ will leave the boundary invariant by the latter's topological invariance \cite[Thm.~1.37]{lee2013introduction}, whence one can always complete such a flow to one on the boundary-less \textit{double}
    \cite[Ex.~9.32]{lee2013introduction} $M\cup_{\partial M}M$.

    See also \cite[Thm.~9.34]{lee2013introduction} and surrounding discussion for remarks to the effect that the theory of flows and integral curves holds good in the presence of a boundary so long as one restricts attention to vector fields whose restriction to $\partial M$ is tangent to it.

  \item\label{item:res:bdry:ext2l1} The $\bR$-actions on $C^{k}(M)$ resulting from flows on $M$ extend to the space $L_c^1(\bR)$ of compactly-supported integrable functions with respect to the usual Lebesgue (=Haar) measure in the familiar fashion
    (\cite[Thms.~19.18, 22.3]{hr-1}), by
    \begin{equation}\label{eq:ext2l1}
      \alpha_\varphi f:=\int_{\bR}\varphi(s)\alpha_s f\ \mathrm{d}s
      ,\quad
      \varphi\in L_c^1(\bR)
      ,\quad
      f\in C^{k}(M).
    \end{equation}
    In terms of that extension, we have
    \begin{equation}\label{eq:betas}
      \beta_s = \frac 1s \alpha_{\chi_{[0,s]}}
      ,\quad
      \chi_{A}:=\text{the characteristic function of the subset $A\subset \bR$}. 
    \end{equation}
    The typical setup for \Cref{eq:ext2l1} is that of actions on \textit{Banach} spaces, but the extension does make sense here: for any compact interval $I \subseteq \R$ the partial orbit $\alpha^f(I)$ is compact, hence has compact closed convex hulls \cite[\S 20.6(3)]{k_tvs-1}, so \Cref{eq:ext2l1} falls within the scope of standard vector-valued integration machinery \cite[Thm.~3.27]{rud_fa}.
    
    Moreover, \Cref{eq:ext2l1} makes sense for arbitrary $\varphi\in L^1$ whenever the orbit map $\alpha^f$ is bounded: Simply extend the continuous linear map $C_c(\R) \to E, \varphi \mapsto \alpha_\varphi f$ continuously to $E$.
  \end{enumerate}  
\end{rems}

In preparation for results concerning actions on arbitrary manifolds, we first consider operators on $C^{\infty}(\bR)$. 


\begin{defn}[The period $s$-difference and integral operators]
  For $s \in \R^\times$ we define the operators
  \begin{equation*}
    \Delta_s \colon C^\infty (\R) \rightarrow C^\infty (\R),\quad \Delta_s f (x)\coloneq f(x+s)-f(x).
  \end{equation*}
  and similarly, adapting \Cref{exp:1para} with $\alpha_t(f)(x) = f(x+t)$,
  \begin{equation*}
    \beta_s \colon C^\infty (\R) \rightarrow C^\infty (\R),\quad
    \beta_s f(x)=\int_0^1 f(x+st)\dd t.
  \end{equation*} 
\end{defn}

\begin{lem}\label{lem:s-per}
  The kernel of the operator $\beta_s$ consists of all derivatives of $s$-periodic functions.
\end{lem}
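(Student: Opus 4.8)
The plan is to reduce the statement to the fundamental theorem of calculus by recognizing $\beta_s$ as, up to a scalar factor, the operator ``shift by $s$ minus identity'' precomposed with antidifferentiation. First I would substitute $u = st$ to rewrite
\[
  \beta_s f(x) = \int_0^1 f(x+st)\dd t = \frac1s\int_0^s f(x+u)\dd u = \frac1s\int_x^{x+s} f(v)\dd v,
\]
and set $G(x)\coloneq \int_0^x f(v)\dd v$, the canonical antiderivative of $f$, which lies in $C^\infty(\R)$ precisely because $f$ does. This yields the key identity $s\,\beta_s f(x) = G(x+s) - G(x)$ for all $x \in \R$, i.e.\ $s\beta_s f = \Delta_s G$.

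For the inclusion ``$\supseteq$'', if $f = h'$ for some $h \in C^\infty(\R)$ with $h(x+s) = h(x)$, then $\beta_s f(x) = \frac1s\int_x^{x+s} h'(v)\dd v = \frac1s\bigl(h(x+s) - h(x)\bigr) = 0$, so $f \in \ker\beta_s$.

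For the reverse inclusion ``$\subseteq$'', suppose $\beta_s f = 0$. Then the key identity forces $G(x+s) = G(x)$ for all $x$, so $G$ is a smooth $s$-periodic function, and since $f = G'$ this exhibits $f$ as the derivative of an $s$-periodic function, as claimed.

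The only steps requiring a word of care are the smoothness of $G$ (immediate from that of $f$) and the change of variables in the parameter integral (routine), so there is no genuine obstacle here: the content of the lemma is exactly the elementary identity $s\beta_s = \Delta_s \circ (\text{antidifferentiation})$, together with the fact that an antiderivative on $\R$ is unique up to an additive constant, which makes the $s$-periodicity conclusion independent of the choice of antiderivative.
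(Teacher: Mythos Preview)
Your proof is correct and takes essentially the same approach as the paper: both establish the identity $s\,\beta_s f = \Delta_s G$ for an antiderivative $G$ of $f$, and then use that $\ker\Delta_s$ consists exactly of the $s$-periodic functions. The paper's version is terser (it records only the key identity and leaves the two inclusions implicit), while you spell out both directions explicitly; there is no substantive difference in method.
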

\begin{proof}
  The kernel of $\Delta_s$ is the closed subspace $C^\infty_s(\R)$ of $s$-periodic functions. If $F\in C^\infty (\R)$ and $f=F'$, then the function $\beta_sf$ (with $\beta_s$ as in the previous section) satisfies the relation 
  \begin{equation} \label{eq:beta_delta}\beta_s f (x)
    = \int_0^1f(x+st)\mathrm{d}t
    = \frac{1}{s} \int_0^s F'(x+t)\mathrm{d}t 
    =\frac{1}{s}\Delta_s F(x). \qedhere\end{equation}
\end{proof}

\begin{setup}[The image of both operators contains the compactly supported smooth functions]\label{setup:3.2}
  Consider $g \in C^\infty_c (\R)$, then we exploit the compact support of $g$ to define for $s\not=0$ a smooth function
  \begin{equation*}
    f(x)\coloneq \sum_{k=1}^\infty g(x-ks)=g(x-s)+g(x-2s)+\cdots \in C^\infty (\R)
  \end{equation*}
  and we note that $\Delta_s f =g$. Likewise, the compact support of $g'$ allows us to define a smooth function
  \begin{equation*}
    h(x)\coloneq s\cdot \sum_{k=1}^\infty g'(x-ks)=s(g'(x-s)+g'(x-2s)+\cdots)
  \end{equation*}
  such that $\beta_s h = g$.
\end{setup}

\begin{lem}\label{setup:limits_ex}
  We consider the space
  \begin{equation*}
    \Cinftylim
    \coloneq
    \mset{f\in C^\infty(\R)}{\lim_{x\to+\infty}f(x)\text{ and }\lim_{x\to-\infty}f(x) \text{ exist in }\R}\,.
  \end{equation*}
  The map $\beta_s$ is injective on $\Cinftylim$, but $\beta_s(\Cinftylim)$ does not contain $C_c^{\infty}(\bR)$.
\end{lem}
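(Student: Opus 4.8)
The plan is to treat the two assertions separately; both reduce to the elementary fact that a continuous $s$-periodic function on $\R$, or even on a half-line, which possesses a limit at infinity must be constant and equal to that limit.

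\emph{Injectivity.} I would take $f\in\Cinftylim$ with $\beta_s f=0$ and invoke \Cref{lem:s-per}: $f$ is then the derivative of an $s$-periodic smooth function, hence $f$ is itself $s$-periodic (one can also see $f(x+s)=f(x)$ at once by differentiating the identity $\int_x^{x+s}f(u)\,\dd u\equiv 0$). Writing $L$ for the limit of $f$ at $+\infty$, periodicity gives $f(x)=\lim_{n\to\infty}f(x+ns)=L$ for all $x$, so $f$ is the constant $L$; since $\beta_s$ fixes constants, $0=\beta_s f=L$, and so $f\equiv 0$.

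\emph{Failure to contain $C_c^{\infty}(\bR)$.} Here it is enough to exhibit one target that is not hit. Assume $s>0$ (the case $s<0$ is symmetric under reflection of the variable) and fix a nonzero $g\in C_c^{\infty}(\bR)$ with $\supp g$ contained in an open interval $(a,b)$ of length $b-a<s$. Suppose $f\in\Cinftylim$ satisfies $\beta_s f=g$, and let $L_\pm$ be its limits at $\pm\infty$. On each of the half-lines $x\le a$ and $x\ge b$ we have $g\equiv 0$, hence $\int_x^{x+s}f(u)\,\dd u\equiv 0$ there, and differentiating gives
\begin{equation*}
  f(x+s)=f(x)\ \ (x\le a)\qquad\text{and}\qquad f(x+s)=f(x)\ \ (x\ge b).
\end{equation*}
Iterating the left-hand relation toward $-\infty$ yields $f(y)=f(y-ns)\to L_-$ for every $y\le a+s$, so $f\equiv L_-$ on $(-\infty,a+s]$; symmetrically $f\equiv L_+$ on $[b,\infty)$. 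Since $b<a+s$, these two intervals overlap, forcing $L_-=L_+=:L$ and $f\equiv L$ on all of $\R$. Then $g=\beta_s f\equiv L$ would be a constant with compact support, so $L=0$ and $g\equiv 0$, contradicting the choice of $g$. Hence $g\notin\beta_s(\Cinftylim)$.

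\emph{Expected obstacle.} None of this is deep; the only thing requiring care is the bookkeeping that propagates the shift relations all the way to the endpoints $a+s$ and $b$, together with the check that the length constraint $b-a<s$ makes the two intervals of constancy cover $\R$. Choosing $\supp g$ very short, say inside $(0,s/2)$, makes everything transparent, so I expect no real difficulty.
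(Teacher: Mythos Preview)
Your argument is correct. The injectivity half matches the paper's proof essentially verbatim (periodic with a limit $\Rightarrow$ constant, and a constant in $\ker\beta_s$ must be $0$).

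For the failure of $\beta_s(\Cinftylim)\supseteq C_c^\infty(\R)$ you take a genuinely different route from the paper. The paper writes down the explicit preimage $f_0(x)=s\sum_{k\ge 1}g'(x-ks)$ from \Cref{setup:3.2}, observes that any other preimage differs from $f_0$ by an element of $\ker\beta_s$, and then uses the behaviour at $\pm\infty$ to deduce that a necessary condition for $g\in\beta_s(\Cinftylim)$ is $\mathrm{per}_s(g')=0$; since this fails for some $g$, the conclusion follows. Your approach sidesteps the explicit preimage entirely: by choosing $\supp g\subset(a,b)$ with $b-a<s$, the shift relation $f(x+s)=f(x)$ on each half-line together with the limits forces $f$ to be constant on overlapping half-lines covering all of~$\R$, an immediate contradiction. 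Your argument is shorter and more self-contained for the statement at hand; the paper's argument is a bit heavier but extracts the sharper obstruction $\mathrm{per}_s(g')\neq 0$, valid for \emph{any} compactly supported $g$ regardless of support length.
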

\begin{proof}
  Injectivity is easy to see: by \Cref{lem:s-per} the kernel of $\beta_s$ consists of $s$-periodic functions, which must thus be constant if they have limits at $\pm\infty$. But such a function is also required by \Cref{lem:s-per} to have a periodic antiderivative, so it must in fact vanish.
  
  To show that
  \begin{equation*}
    \beta_s(\Cinftylim) \cap C^\infty_c(\R) \not= C_c^\infty(\R),
  \end{equation*}
  let $g\in C^\infty_c (\R)$ and let $f\in C^\infty(\R)$ be a smooth function with $\beta_s f = g$. By \Cref{setup:3.2}
  \begin{equation*}
    f_0(x)\coloneq s\sum_{k=1}^\infty g'(x-ks)=s(g'(x-s)+g'(x-2s)+\cdots )
  \end{equation*}
  satisfies $\beta_s f_0 = g$, so that
  \begin{equation*}
    f = f_0 + p, \quad \mbox{ with } \quad p\in\ker\beta_s
  \end{equation*}
  with $p$ the derivative of an $s$-periodic function.
  
  For all $x < \min(\supp g)$,
  we have that $f(x)=p(x)$.  If $f\in \Cinftylim$, then the limit for $x\to-\infty$ exists, this implies that $p=0$ since a periodic function having a limit has to be constant and the only constant which is the derivative of a periodic function is zero.  Now, take the limit for $x\to\infty$ and we see that for $M \coloneq \max(\supp g)$, $f_0|_{[M,\infty)}$ is $s$-periodic. More concretely, the $s$-periodization
  \begin{equation*}
    f_1(x) := s\cdot \per_s(g')(x) \coloneq s\sum_{k \in \Z} g'(x-ks) = s\cdot (\per_s g)'(x)
  \end{equation*}
  coincides with $f_0$ for large $x$.  By the preceding argument, $\beta_s(f_0 - f_1) = g$ implies $f_1 = s\cdot \per_s(g') = 0$.  Note that $\per_s (g') =0$ is not satisfied for all $g \in C_c^\infty (\R)$, proving that $\beta_s(\Cinftylim)$ does not contain $C^\infty_c(\R)$.
\end{proof}

\Cref{setup:limits_ex} permits us to treat certain aperiodic $\R$-actions. We illustrate this for the case where $M$ is a circle.
\begin{ex}[Vector fields on the circle with zeros]\label{ex:stat_circ}
  Let $\SSS^1$ be the unit circle identified with $\R /2\pi\Z$ by $\R/2\pi\Z \to \SSS^1, \theta \mapsto e^{i\theta}$ .  Then $2\pi$-periodic functions in $C^\infty (\R,\R)$ correspond to vector fields on the circle.  Let $0 \not=Z$ be a vector field on $\bS^1$ with a zero~$\theta_0$.  The flow of $Z$ is an $\R$-action $\alpha_Z \coloneq \Fl^Z \colon \R \times \SSS^1 \rightarrow \SSS^1$ on $\SSS^1$.  Choosing some $e^{i\theta_1} \in \SSS^1$ in which $Z$ does not vanish as the initial point of the flow curve, we pull back the functions in $C^\infty (\SSS^1)$ along the resulting flow curve $\gamma(t) := \alpha_Z (t, e^{i\theta_1})$. In both cases, due to the flow approaching the stationary points of the flow, we thus obtain subspaces of $\Cinftylim$ which clearly contain the space $C^\infty_c (\R)$. Thus \Cref{setup:limits_ex} implies that $\beta_s$ is not surjective. We deduce that $C^\infty (\SSS^1) \rtimes_{\alpha_Z} \R$ is not locally exponential.
\end{ex}

\begin{ex} (Translations on $\R$) In this example we connect the integral operator to the discussion in \Cref{sect:GE}. The key is to consider the translation action on the reals. We consider the algebra $\cA = C^\infty_c(\R,\mathbb{C})$ and the flow
  \begin{equation*}
    \alpha_t(f)(x) = f(x + t),\quad  x,t \in \R
  \end{equation*}
  generated by the vector field $X.f = f'$. Note that the integral operator for the exponential in the semidirect product
  $\cA \rtimes_\alpha \R$ constructed from $\alpha$ is simply $\beta_1$ discussed above. The discussion shows that $\beta_1$ is injective (this also follows from \Cref{prop:injec}) but not surjective. Alternatively, one can use \Cref{prop:2} to derive that $\beta_1$ is not surjective: For each $p \in \R$, the functional
  \begin{equation*}
    \lambda_p(f) := \widehat f(p) = \int_\R e^{ipx} f(x)\, dx 
  \end{equation*}
  is continuous and satisfies
  \begin{equation*}
    \lambda_p \circ \alpha_t = e^{-ipt} \lambda_p.
  \end{equation*}
  So it is $1$-periodic for $p \in 2 \pi \Z$. 
\end{ex}

\begin{setup}\label{set:periodic_case}
  Consider now a smooth flow $(\sigma_t)_t$ on a compact manifold $M$, $T$-periodic for some $T>0$ in the sense that $\sigma_T=\id$, with the induced action $\alpha$ of \Cref{eq:act.on.ck} and resulting semidirect product $E \rtimes_{\alpha} \R$.

  The vector field $X$ associated to $\alpha$ as in \Cref{eq:xalpha} then also produces a derivation
  \begin{align}\label{eq:deriv_act}
    D \colon C^\infty (M) \rightarrow C^\infty (M),\quad
    D(f) = \left.\frac{d}{dt}\right|_{t=0} \alpha_t f= Xf\,.
  \end{align}
  on the algebra $C^\infty (M)$ (non-zero, because $X\ne 0$).
        
  We will now apply the results from \Cref{sect:GE} to prove under some conditions that $E \rtimes_{\alpha} \R$ is not locally exponential.  Since $D$ is a derivation, \Cref{re:deriv} implies that it suffices to find a non-zero imaginary eigenvalue of $D$ on~$E_\bC$.  Since $\alpha$ is $T$-periodic, all eigenvalues of $D$ are contained in $\frac{2\pi i}{T}\Z$ and, by the Peter--Weyl Theorem \cite{HaM23}, the eigenvectors span a dense subspace of $E_\bC$.  As $D \not=0$, $D$ has a non-zero eigenvalue $\lambda = 2\pi i \frac{m}{T}$ for an $m\in\Z\setminus\{0\}$. Hence \Cref{prop:eigenvalue_seq} implies that, for
  \begin{equation*}
    t_n := \frac{2\pi i}{n \lambda}= \frac{T}{nm}\neq0 
  \end{equation*}
  the operators $\beta_{t_n}$ are not surjective, whence not invertible. Consequently, $E \rtimes_{\alpha} \R$ is not locally exponential.
\end{setup}

\begin{ex}\label{ex:tori} (Linear flows on tori)  
  Let $M := \T^d, d \geq 2$, $0\neq \theta = (\theta_1,\theta_2, \ldots,\theta_d) \in \R^d$, and consider on $M$ the smooth flow
  \begin{equation*}
    \sigma=(\sigma_t)_t
    ,\quad
    \sigma_t(\bz)
    :=
    (e^{it\theta_1} z_1, \cdots, e^{it\theta_d} z_d).
  \end{equation*}
  For the action on $E = C^\infty(\T^d)$ defined by
  $\alpha_t f := f \circ \sigma_t$, we claim that the operators $\beta_s$ of \Cref{eq:beta_t} will be non-injective as well as non-surjective for a dense set of parameters $s\in \bR_{>0}$. 
  Indeed, the characters $\chi_\bm(\bz) := z_1^{m_1} \cdots z_d^{m_d}$ are $\alpha$-eigenvectors with
  \begin{equation*}
    D \chi_\bm = i(m_1 \theta_1 + \cdots + m_d \theta_d) \chi_\bm.
  \end{equation*} 
  Because $\theta \not=0$, at least one $\theta_j$ is non-zero, and $\bm = m_j \be_j$ yields a non-zero imaginary eigenvalue~$i\lambda$.   As no power of $\chi_\bm$ vanishes, all numbers $\Z i \lambda$ are eigenvalues. But then:
  \begin{itemize}[wide]
  \item $\beta_s$ will fail to be injective for $s\in \frac {2\pi i}{\lambda}\bQ_{>0}$ by \Cref{prop:eigenvalue_seq};

  \item and similarly, it will also fail to be surjective for the same values of $s$ as a consequence of \Cref{prop:2}, upon noting that the flow $(\sigma_t)_t$ preserves the Haar measure on the torus and hence the characters we have identified as eigenvectors can also be cast as eigenfunctionals
    \begin{equation*}
      C^{\infty}(\bT)\ni
      f
      \xmapsto{\quad\chi\quad}
      \int_{\bT} \chi\cdot f\ \mathrm{d}\mu_{\bT} 
      \in \bC.
    \end{equation*}
  \end{itemize}
  In particular, the Fr\'echet--Lie group $E \rtimes_\alpha \R$ is not locally exponential. 
\end{ex}

The following will address a broad class of cases of the introductions \Cref{cjn:main}. 

\begin{thm}\label{th:3.10}
  Let $\sigma=(\sigma_t)_{t\in \bR}$ be a non-trivial smooth flow on a compact manifold~$M$. The operators $\beta_s$ on $C^{\infty}(M)$ are non-surjective in either of the following cases.

  \begin{enumerate}[(1),wide]
  \item\label{item:th:3.10:nonperlcl} For almost all $s$ if there is a non-periodic locally closed orbit $\sigma_{\bR}(p)$. 

  \item\label{item:th:3.10:tor} For a dense set of $s\in \bR_{>0}$ if there exists an $p\in M$, such that $T := \overline{\sigma_\R(p)}$ is a torus on which the flow is linear.
  \end{enumerate}
\end{thm}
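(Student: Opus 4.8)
The plan is to reduce both statements to the circle and torus cases already treated in \Cref{ex:stat_circ} and \Cref{ex:tori} (equivalently, to \Cref{setup:limits_ex} and \Cref{prop:2}), by constructing, from the orbit data, an equivariant map from $M$ (or a neighbourhood of the relevant orbit) down to a model flow for which non-surjectivity of $\beta_s$ is already known, and then pulling the obstruction back. The underlying mechanism in both cases is naturality: if $q\colon M\to N$ is a smooth $\bR$-equivariant map, then $q^*\colon C^\infty(N)\to C^\infty(M)$ intertwines the two families $(\beta_s)$, so a non-zero continuous eigenfunctional on $C^\infty(N)$ that obstructs surjectivity (via \Cref{prop:2}) pulls back to one on $C^\infty(M)$, provided $q^*$ has dense range or one can otherwise see the pulled-back functional is non-zero on the image.

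For \Cref{item:th:3.10:nonperlcl}: let $\sigma_\bR(p)$ be a non-periodic, locally closed orbit. Non-periodicity means the orbit map $\bR\to M$, $t\mapsto\sigma_t(p)$ is injective; locally closedness means the orbit is an embedded (immersed and locally closed) submanifold, hence a copy of $\bR$ sitting in $M$ as a closed subset of an open set $U$. I would first build a tubular-neighbourhood-type flow box adapted to this embedded line: choose a small transversal and use the flow to get an equivariant diffeomorphism of a saturated neighbourhood of a compact sub-arc with a product, and more importantly produce a smooth $\bR$-equivariant function $M\to\bR$ (with the translation flow on $\bR$) that on the orbit restricts to the parametrization $t\mapsto t$ — here compactness of $M$ forces this function to be bounded, so after composing with a suitable smooth bounded reparametrization $\bR\to(a,b)$ with nonzero derivative near the relevant window we land in $\Cinftylim$-type behaviour. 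Concretely: pull back $C^\infty_c(\bR)$-functions supported in the flow box along the orbit curve exactly as in \Cref{ex:stat_circ}; because the orbit is injective and locally closed, these pullbacks extend by zero to genuine elements of $C^\infty(M)$ whose restriction to the orbit line has limits $0$ at $\pm\infty$, i.e.\ lies in $\Cinftylim$. Then \Cref{setup:limits_ex} says $\beta_s$ restricted to this subspace is injective but misses some $C^\infty_c(\bR)$-function $g$; I must argue that $g$ is not in the image of the full $\beta_s$ on $C^\infty(M)$ either — this follows because any preimage $f\in C^\infty(M)$ would restrict along the orbit to a preimage under the one-dimensional $\beta_s$ of $g$, contradicting the computation in the proof of \Cref{setup:limits_ex} (the $s$-periodization term $\per_s(g')$ must vanish). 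The "almost all $s$" is automatic since the only constraint is $s\neq 0$. The main obstacle here is bookkeeping: ensuring the locally-closed hypothesis really gives an extension-by-zero into $C^\infty(M)$ and that restriction-to-the-orbit is well-defined and continuous from $C^\infty(M)$ onto a space to which \Cref{setup:limits_ex} applies; the interesting non-periodic-but-not-locally-closed orbits (dense orbits, etc.) are genuinely excluded, which is why this is only a partial result.

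For \Cref{item:th:3.10:tor}: suppose $\overline{\sigma_\bR(p)}=T\cong\bT^d$ with the flow linear on $T$, say with frequency vector $\theta\in\bR^d$. Since $\overline{\sigma_\bR(p)}=T$, the flow on $T$ has dense orbits, so $\theta$ is irrational in the appropriate sense and in particular $\theta\neq 0$, so some $\theta_j\neq 0$ giving a non-zero imaginary eigenvalue $i\lambda$ of $D$ acting on $C^\infty(T)$ via the character $\chi_{\be_j}$, with all $\Z i\lambda$ eigenvalues as in \Cref{ex:tori}. Invariance of Haar measure on $T$ turns these into continuous eigenfunctionals on $C^\infty(T)$; I now extend them to $C^\infty(M)$ by precomposing with the restriction map $r\colon C^\infty(M)\to C^\infty(T)$ (smooth and continuous since $T$ is a compact embedded submanifold). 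The extended functional $\nu:=\chi\circ r$ is still $\alpha$-equivariant with the same character $e^{2\pi i n t/T_0}$ (the flow on $T$ is the restriction of the flow on $M$), and it is non-zero and non-fixed because $r$ is surjective and the torus characters are non-constant; hence \Cref{prop:2} applies and $\beta_s$ on $C^\infty(M)$ is non-surjective for $s\in\frac{2\pi i}{\lambda}\bQ_{>0}$, a dense set in $\bR_{>0}$. The obstacle here is mild — essentially just checking that restriction to the closed invariant submanifold $T$ is a continuous linear surjection of Fréchet spaces and intertwines the flows, which is standard (extend smooth functions off $T$ using a tubular neighbourhood), after which the argument is a direct transcription of \Cref{ex:tori}.
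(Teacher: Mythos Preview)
Your treatment of \Cref{item:th:3.10:tor} is correct and matches the paper's: restriction $C^\infty(M)\to C^\infty(T)$ is a surjective equivariant map, so non-surjectivity of $\beta_s$ on $C^\infty(T)$ for a dense set of $s$ (from \Cref{ex:tori}) transfers to $C^\infty(M)$.

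Your argument for \Cref{item:th:3.10:nonperlcl}, however, has a real gap. You correctly note that compactly supported functions on a flow-box arc extend by zero to $C^\infty(M)$ and restrict back to elements of $\Cinftylim$. But the crucial step is the converse direction: given an arbitrary $f\in C^\infty(M)$ with $\beta_s f=\tilde g$, you need the restriction $f\circ\sigma^p$ to lie in $\Cinftylim$ in order to invoke \Cref{setup:limits_ex}. This is \emph{not} guaranteed by local closedness of the orbit. Local closedness only says the orbit is an embedded copy of $\bR$; it does \emph{not} force $\sigma^p(t)$ to converge as $t\to\pm\infty$. A typical obstruction: an orbit spiraling into a periodic limit cycle is non-periodic and locally closed, yet $\sigma^p(t)$ has no limit at infinity, so $f\circ\sigma^p$ will generically oscillate forever and lie outside $\Cinftylim$. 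Your appeal to ``the computation in the proof of \Cref{setup:limits_ex}'' then breaks down, since that computation uses the limit at $-\infty$ to kill the periodic correction $p$ and the limit at $+\infty$ to force $\per_s(g')=0$.

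This is also why your claim that ``the `almost all $s$' is automatic'' is wrong. The paper replaces the $\Cinftylim$ hypothesis by a weaker property that \emph{does} follow from compactness of $M$: any $f\circ\sigma^p$ has convergent subsequences along integer times $n_k\to\infty$. The key input is then Weyl's theorem that for almost every $s$ the sequence $(n_k/s)$ is equidistributed modulo $1$; combined with eventual $s$-periodicity of a putative preimage, this forces the restriction to be eventually constant on $[v,\infty)$, and the argument closes. The exceptional set of $s$ for which equidistribution fails is exactly the origin of the ``almost all $s$'' qualifier --- it is a genuine feature of the result, not a triviality.
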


\begin{rem}\label{re:almost.per.pt}
  Recall \cite[\S I.3.3, Def.~2, Prop.~5]{bourb_top_en_1} that the \textit{locally closed} subspaces of a topological space are those open in their closures.
  
  That hypothesis of \Cref{th:3.10}\Cref{item:th:3.10:nonperlcl} on $\sigma_\R(p)$ can be recast as asking that the orbit map $\sigma^p$ be non-periodic and the orbit $\sigma_\R(p)$ be \textit{locally connected} under its subspace topology has a basis consisting of connected open subsets (\cite[Def.~27.7]{wil_top}).
\end{rem}

Before going into the proof, we  note that \cite[Ex.~II.5.9]{Neeb06} shows that exponential surjectivity locally at the origin is equivalent to $\beta_s$ being a diffeomorphism for all sufficiently small $s\in \bR_{>0}$. Hence  \Cref{th:3.10} proves the following.

\begin{cor}\label{cor:flows.not.exp}
  For non-trivial flows $(\sigma_t)_{t \in \R}$ as in \Cref{th:3.10} the exponential of the corresponding Fr\'echet--Lie group $C^{\infty}(M)\rtimes_{\alpha}\bR$ is not a diffeomorphism locally at the origin.
\end{cor}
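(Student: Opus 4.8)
\Cref{cor:flows.not.exp} is a formal consequence of \Cref{th:3.10}, so my plan is to record that reduction and then describe how I would prove \Cref{th:3.10}, where all the work sits. For the reduction: by \Cref{exp:1para} the exponential of $C^\infty(M)\rtimes_\alpha\bR$ is $(v,t)\mapsto(\beta_tv,t)$; since $v\mapsto\beta_tv$ is linear and $\beta_t(0)=0$ for every $t$, the differential of $\exp$ at a point $(0,s)$ is $(w,\tau)\mapsto(\beta_sw,\tau)$. Thus if $\exp$ were a diffeomorphism on a neighbourhood $U$ of the origin, then for every small $s>0$ with $(0,s)\in U$ the operator $\beta_s$ would be a topological linear isomorphism of $C^\infty(M)$, in particular surjective. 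But \Cref{th:3.10} provides $s$ arbitrarily close to $0$ (a dense set of such $s$ in case \Cref{item:th:3.10:tor}, almost every $s$ in case \Cref{item:th:3.10:nonperlcl}) for which $\beta_s$ fails to be surjective. This contradiction, which is precisely the criterion from \cite[Ex.~II.5.9]{Neeb06} quoted before the theorem, proves the corollary, so the task reduces to proving \Cref{th:3.10}.

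For part \Cref{item:th:3.10:tor} I would argue via eigenfunctionals. Identify the invariant torus $T\subseteq M$ with $\bR^d/\bZ^d$ so that $\sigma_t|_T$ becomes $x\mapsto x+t\eta$ with frequency $\eta\ne 0$ (nonzero since the orbit is dense in the positive-dimensional torus $T$). The linear flow preserves the Haar probability measure $\mu_T$, which we regard as a measure on $M$ supported on $T$; for $m\in\bZ^d$ set $\nu_m(f):=\int_T\overline{\chi_m}\cdot f\,\mathrm d\mu_T$ with $\chi_m(x)=e^{2\pi i\langle m,x\rangle}$. Then $\nu_m$ is a continuous linear functional on $C^\infty(M)$ (restriction to the compact set $T$ followed by integration), it is nonzero for each $m$ (pair it with a smooth extension of $\chi_m$ from $T$ to $M$), and invariance of $\mu_T$ gives $\nu_m\circ\alpha_t=e^{2\pi i\langle m,\eta\rangle t}\nu_m$. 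Fixing $m_0$ with $\langle m_0,\eta\rangle\ne 0$ and applying \Cref{prop:2} to the functionals $\nu_{km_0}$, $k\ge 1$, shows $\beta_s$ is non-surjective for all $s$ in $\bigcup_{k\ge1}\tfrac1{k\langle m_0,\eta\rangle}\bZ_{>0}=\tfrac1{|\langle m_0,\eta\rangle|}\bQ_{>0}$, which is dense in $\bR_{>0}$ as claimed. This is the functional-analytic shadow of the character computation in \Cref{ex:tori}.

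For part \Cref{item:th:3.10:nonperlcl} I would pull back along the orbit. By \Cref{re:almost.per.pt} the hypothesis says the orbit $N:=\sigma_\bR(p)$ is an embedded copy of $\bR$ and $\gamma:=\sigma^p$ is an embedding conjugating the flow to translation, so $\Phi\colon C^\infty(M)\to C^\infty(\bR)$, $\Phi(f):=f\circ\gamma$, intertwines $\beta_s$ on $C^\infty(M)$ with the translation operator $\beta_s$ on $C^\infty(\bR)$. The set $L:=\overline N\setminus N$ is compact, invariant and disjoint from $N$; since $N$ cannot accumulate on a compact sub-arc of itself (such an accumulation point would lie in $L\cap N$), a bump supported in a thin flow-box around $\sigma_{[-a,a]}(p)$ and missing $L$ extends smoothly to $M$, giving $C_c^\infty(\bR)\subseteq\Phi(C^\infty(M))$, while boundedness of $H$ and all its iterated Lie derivatives $X^kH$ on the compact $M$ gives $\Phi(C^\infty(M))\subseteq\{\text{smooth functions with all derivatives bounded}\}$. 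Now if $\beta_s$ on $C^\infty(M)$ were surjective, every $g\in C_c^\infty(\bR)$ would equal $\beta_s(H\circ\gamma)$ for some $H\in C^\infty(M)$; by \Cref{lem:s-per} and \Cref{setup:3.2} such a solution has the form $\psi_0+p$ with $\psi_0:=s\sum_{k\ge1}g'(\,\cdot-ks)$ and $p$ an $s$-periodic function, hence equals the $s$-periodic function $p$ near $-\infty$ and the $s$-periodic function $p+s\,\per_s(g')$ near $+\infty$. Reading this back along $\gamma$ forces $H\circ\sigma_s=H$ on $\overline{\gamma([T_0,\infty))}\supseteq\omega(p)$ and on $\overline{\gamma((-\infty,-T_0])}\supseteq\alpha(p)$, i.e.\ on all of $L$; I would then show that for almost every $s$ this, together with the fact that the orbit accumulates on the whole of $L$, forces the asymptotic jump $s\,\per_s(g')$ between the two ends to be constant, hence zero, which contradicts a choice of $g$ with $\per_s(g')\ne 0$ (e.g.\ a single bump). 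This is the same mechanism as in \Cref{ex:stat_circ}, where $L$ is a pair of fixed points, $\Phi(C^\infty(M))\subseteq\Cinftylim$, and the conclusion even holds for every $s$.

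The step I expect to be the main obstacle is the last one in \Cref{item:th:3.10:nonperlcl}: quantifying the exceptional "resonant" values of $s$. When $L$ is a single periodic orbit of period $\tau$ the condition is merely $s/\tau\notin\bQ$, and when $L$ is a subtorus carrying a linear flow it is a comparable Diophantine condition; but for a general compact invariant $L$ — possibly non-minimal (a figure-eight that the orbit threads) or a Denjoy-type minimal set — ruling out, off a Lebesgue-null set of $s$, the existence of $\sigma_s$-invariant functions on $L$ rich enough to be realised as pull-backs $H\circ\gamma$ with nonzero jump requires a genuine dynamical argument about time-$s$ maps of restricted flows. The remaining ingredients — part \Cref{item:th:3.10:tor} and the reduction of \Cref{cor:flows.not.exp} itself — are routine.
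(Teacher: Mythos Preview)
Your reduction of the corollary to \Cref{th:3.10} is correct and is exactly what the paper does: it simply quotes \cite[Ex.~II.5.9]{Neeb06} to the effect that local exponentiality forces all $\beta_s$ for small $s$ to be isomorphisms, which \Cref{th:3.10} rules out. So for the corollary \emph{proper} there is nothing to add.

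You go further and sketch \Cref{th:3.10} itself. For part \Cref{item:th:3.10:tor} your argument via eigenfunctionals supported on the invariant torus is essentially the paper's: it restricts to $T$, invokes the character computation of \Cref{ex:tori}, and uses that the restriction $C^\infty(M)\to C^\infty(T)$ is onto.

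For part \Cref{item:th:3.10:nonperlcl} your route diverges from the paper's, and the obstacle you flag is real for your approach but is avoided entirely by the paper's. You try to control the asymptotic $s$-periodic pieces of $H\circ\gamma$ by pushing the periodicity down to the limit set $L=\overline N\setminus N$ and then arguing about $\sigma_s$-invariant functions on $L$; as you note, for general $L$ this becomes a delicate dynamical problem. The paper sidesteps this completely in \Cref{le:per.at.infty.tame}: rather than analyzing $L$, it exploits only the compactness of $M$ to extract a convergent subsequence $\sigma^p(n_k)\to x$ along integer times, and then invokes Weyl's theorem that for almost every $s$ the sequence $(n_k/s)_k$ is equidistributed mod $1$. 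Density of the fractional parts plus $s$-periodicity of $f=H\circ\gamma$ then forces $f$ to be eventually constant, hence (by your own $\psi_0+p$ decomposition) the jump $s\,\per_s(g')$ vanishes. So the ``main obstacle'' dissolves once you bring in equidistribution along integer-time samples; no structural information about $L$ is needed.
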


\begin{proof}[Proof of \Cref{{th:3.10}}]
  \begin{enumerate}[(1),wide]
  \item   Fix an integral curve $\iota := \sigma^p$
    as in the statement. Setting 
    \begin{equation*}
      A
      :=
      \iota^* C^{\infty}(M)
      \le C^{\infty}(\bR)
      ,\quad
      \iota^*(f) := f \circ \iota,
    \end{equation*}
    note first that $C^{\infty}_{[u,v]}(\bR)\le A$ (functions supported in $[u,v]$) for some $u<v$. We claim that for almost all $s$ the image $\beta_s(A)$ cannot contain all of $C^{\infty}_c(\bR)$.
    
    
    To see this, observe first that the $[u,v]$-supported $g\in C^{\infty}_{[u,v]}(\bR)$ are precisely the functions of the form $\Delta_s G$ for arbitrary
    \begin{equation*}
      G:=\sum_{k\ge 1}g(\bullet-ks)
    \end{equation*}
    vanishing on $(-\infty,u]$ and $s$-periodic on $[v,\infty)$. By \Cref{eq:beta_delta} we have 
    \begin{equation*}
      \Delta_s(g) = s\beta_s (g'),\quad  g \in C^\infty(\R),
    \end{equation*}
    each $g\in C^{\infty}_{c}(\R)$ is the image through $\beta_s$ of a unique function $\widetilde{g}\in C^{\infty}(\R)$ which
    \begin{itemize}[wide]
    \item vanishes on $(-\infty,u]$

    \item and has $s$-periodic antiderivative on $[v,\infty)$
    \end{itemize}
    The last condition is equivalent to $\widetilde{g}$ being $s$-periodic and having vanishing integral over its length-$s$ intervals in $[v,\infty)$. 
    
    Suppose $g=\beta_s f$ for some $f\in A$. The latter function will be $s$-periodic on both $(-\infty,u]$ and $[v,\infty)$ and hence, for almost all $s$, eventually constant at $\pm\infty$ by \Cref{le:per.at.infty.tame} below (which addresses the restriction $f|_{[v,\infty)}$ directly; the other case is analogous). Because
    \begin{equation*}
      \beta_s\left(f-\widetilde{g}\right)
      =0
      \xRightarrow{\quad}
      f-\widetilde{g}\text{ is globally $s$-periodic}
    \end{equation*}
    and $\widetilde{g}$ vanishes close to $-\infty$, it too must be constant close to both $\pm\infty$ (and indeed, vanish: not only is it periodic, but it has periodic antiderivative at $\pm\infty$). This, though, contradicts the freedom to choose the $s$-periodic-antiderivative $\widetilde{g}|_{[v,\infty)}$ arbitrarily.

  \item The restriction map $C^{\infty}(M)\to C^{\infty}(T)$ is onto by standard smooth-function extension theory \cite[Lemma 2.26]{lee2013introduction}, so the linear-flow non-surjectivity of $\beta_s$ for densely many $s$ (\Cref{ex:tori}) entails non-surjectivity for $C^{\infty}(M)$.  \qedhere
  \end{enumerate} 
\end{proof}

\begin{lem}\label{le:per.at.infty.tame}
  Let $\sigma^p$ be an integral curve meeting the hypothesis of \Cref{th:3.10}\Cref{item:th:3.10:nonperlcl} and fix some $a>0$.  For almost all $s$, the eventually-$s$-periodic continuous functions on $(a,\infty)$ in $(\sigma^p)^*C^\infty(M)$ are eventually constant.
\end{lem}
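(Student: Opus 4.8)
The plan is to prove the stronger statement that the set of exceptional parameters
\[
\mathcal{B}:=\bigl\{\,s>0 : F\circ\sigma^p\text{ is eventually }s\text{-periodic but not eventually constant, for some }F\in C^{\infty}(M)\,\bigr\}
\]
is \emph{countable} (we will in fact only use that $M$ is compact). So fix $s\in\mathcal{B}$ together with a witnessing $F$. First I would set up two elementary reductions. Since $F\circ\sigma^p\in C^{\infty}(\R)$ is $s$-periodic on a half line $(b,\infty)$, it agrees there with a genuinely $s$-periodic $\psi\in C^{\infty}(\R)$, which I view as a continuous function $\Psi$ on the circle $\R/s\Z$; and $\Psi$ is non-constant precisely because $F\circ\sigma^p$ is not eventually constant. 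Secondly, compactness of $M$ forces $\sigma^p(t)$ to approach the (non-empty, compact, invariant) $\omega$-limit set $\omega(p)$ as $t\to+\infty$, and continuity of $F$ then gives: whenever $t_k\to+\infty$ and $\sigma^p(t_k)\to q\in\omega(p)$, one has $F(\sigma^p(t_k))\to F(q)$. Writing $G:=F|_{\omega(p)}$, this says $G$ is the boundary value of $F\circ\sigma^p$ at $+\infty$.

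Next I would form the joint hull
\[
A:=\bigcap_{T\ge0}\overline{\bigl\{\,([t],\sigma^p(t)) : t\ge T\,\bigr\}}\ \subseteq\ (\R/s\Z)\times\omega(p),
\]
a non-empty compact set invariant under the product flow $\Theta_c(\eta,q)=(\eta+c,\sigma_c q)$, whose projection to $\R/s\Z$, being compact and invariant under the minimal rotation flow, is all of $\R/s\Z$. If $([t_k],\sigma^p(t_k))\to(\eta,q)$ with $t_k\to+\infty$, then $\psi(t_k)\to\Psi(\eta)$ (continuity and $s$-periodicity of $\psi$) and also $\psi(t_k)=F(\sigma^p(t_k))\to G(q)$, so $\Psi\equiv G$ on $A$; applying $\Theta_c$ yields $\Psi(\eta+c)=G(\sigma_c q)$ for all $c\in\R$ and all $(\eta,q)\in A$. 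Two things follow. First, if $(\eta_1,q),(\eta_2,q)\in A$ then $\Psi(\cdot+\eta_1)=\Psi(\cdot+\eta_2)$, so $\eta_1-\eta_2$ is a period of $\Psi$; letting $s_0>0$ be the minimal period of the non-constant $\Psi$ (so $s=Ns_0$ with $N\ge1$ an integer), the assignment $(\eta,q)\mapsto\eta\bmod s_0$ is constant on the fibres of the quotient map $A\to\omega(p)$, hence descends to a continuous map $\rho\colon\omega(p)\to\R/s_0\Z$. Second, $\Theta$-invariance of $A$ makes $\rho$ equivariant, $\rho(\sigma_c q)=\rho(q)+c$, and surjective. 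Thus the flow on $\omega(p)$ factors onto the speed-one rotation (translation) flow on the circle $\R/s_0\Z$.

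It then remains to bound $\mathcal{S}:=\{\,s_0>0 : (\omega(p),\sigma)\text{ factors onto the rotation flow on }\R/s_0\Z\,\}$, for by the above $\mathcal{B}\subseteq\{\,Ns_0 : N\ge1\text{ integer},\ s_0\in\mathcal{S}\,\}$ and it suffices that $\mathcal{S}$ be countable. Here I would choose, by Krylov--Bogolyubov, a flow-invariant Borel probability measure $\mu$ on the compact metrizable space $\omega(p)$. Each $s_0\in\mathcal{S}$, through a factor map $\rho_{s_0}$, produces $e^{2\pi i\rho_{s_0}/s_0}\in L^2(\omega(p),\mu)$, of unit norm and an eigenfunction of the Koopman flow with eigenvalue $2\pi/s_0$; distinct $s_0$ give distinct eigenvalues, hence pairwise orthogonal unit vectors in the separable space $L^2(\mu)$. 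So $\mathcal{S}$ is countable, whence $\mathcal{B}$ is countable and in particular Lebesgue-null, which is the assertion.

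I expect the one genuinely delicate point to be manufacturing the continuous equivariant circle factor $\rho$ out of the pointwise identity $\Psi\equiv G$ on $A$: one must verify that $A\to\omega(p)$ is onto (so that $\rho$ is defined everywhere) and is a quotient map (compact to Hausdorff), and that the period $s_0$ does not vary over $A$ --- all of which rest on the rigidity of a non-constant continuous periodic function. Everything afterwards is soft; but it is worth emphasising that this circle-factor step is precisely what tames the ``one orbit, uncountably many test functions $F$'' difficulty, replacing it by counting the at most countably many rotations that the single flow on $\omega(p)$ can see.
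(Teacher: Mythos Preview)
Your argument is correct and takes a genuinely different route from the paper's. The paper fixes, once and for all, a single convergent subsequence $\sigma^p(n_k)\to x$ along the positive integers, and then invokes Weyl's theorem that for almost every $s>0$ the sequence $(n_k/s)_k$ is equidistributed modulo~$1$; for any such $s$, an eventually $s$-periodic $f=F\circ\sigma^p$ must satisfy $f(n_k)\to F(x)$ while the residues $n_k\bmod s$ sweep out a dense subset of a period, forcing $f$ to be eventually constant. This is short and needs nothing beyond equidistribution.

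Your approach instead extracts, from each bad $s$, a continuous equivariant factor $\rho\colon\omega(p)\twoheadrightarrow\bR/s_0\bZ$ onto a rotation, and then counts such factors via Koopman eigenvalues in $L^2(\mu)$ for an invariant $\mu$. The payoff is a strictly stronger conclusion --- the exceptional set $\mathcal{B}$ is \emph{countable}, not merely null --- and a structural explanation for why bad parameters are rare: each one forces a genuine circle quotient of the $\omega$-limit dynamics. The cost is more machinery (Krylov--Bogolyubov, the quotient-map verification for $A\to\omega(p)$, the minimal-period reduction). Your self-identified delicate point, that $A\to\omega(p)$ is surjective and a closed quotient so that $\rho$ is well defined and continuous, does go through exactly as you outline; the only step worth making explicit in a final write-up is that any $(\eta,q)\in A$ is realised as a limit along \emph{some} sequence $t_k\to+\infty$ (diagonalise over the nested closures), which you implicitly use when proving $\Psi(\eta)=G(q)$ on $A$.
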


\begin{proof}
  Let $f \in (\sigma^p)^*C^\infty(M)$.  The assumptions on $f|_{(a,\infty)}$ are its $s$-periodicity and the fact that $f \in (\sigma^p)^*C^\infty(M)$. As $\sigma^p$ takes values in the \textit{compact} manifold $M$, there is a convergent subsequence
  \begin{equation}\label{eq:fconv}
    \sigma^p(n_k)
    \xrightarrow[k]{\quad}
    x\in M
    \xRightarrow{\quad}
    f(n_k)
    \xrightarrow[k]\quad
    f(x)
    ,\quad \left(n_1<n_2<\cdots\right) \subset \bZ_{>0}.
  \end{equation}
  The crucial ingredient, next, is Weyl's result \cite[Thm.~1.4.1]{kn_unif-distrib} to the effect that for almost all $s>0$ the sequence $\left(\frac{n_k}s{}\right)_k$ is \textit{uniformly distributed modulo 1} in the sense of
  \cite[Ch.~1, Def.~1.1]{kn_unif-distrib}:
  \begin{equation*}
    \lim_{n\to\infty}
    \frac{\sharp\left\{1\le k\le n\ :\ x_k\in J\right\}}{n}
    =
    \text{length of }J
    ,\quad
    \quad \text{ for every subinterval }J\subseteq [0,1].
  \end{equation*}  
  This implies in particular that the set of fractional parts $\left\{\frac{n_k}{s}\right\}:=\frac{n_k}{s}-\left\lfloor \frac{n_k}{s}\right\rfloor$ is dense in $[0,1]$ so, rescaling by $s$,
  \begin{equation*}
    n_k = t_k N_k s+(1-t_k)(N_k+1)s
    ,\quad
    N_k\in \bZ_{>0}
    \quad\text{with}\quad
    \left\{t_k\right\}_k\subset[0,1]\text{ dense}.
  \end{equation*}
  The $s$-periodicity of $f$ and the convergence \Cref{eq:fconv} then implies that $f$ is constant (for the almost-all $s$ in question), as claimed.
\end{proof}

\begin{cor}\label{cor:flow.on.s1}
  If $\alpha$ is a non-trivial flow on $M = \bS^1$, then $C^\infty (\SSS^1) \rtimes_\alpha \R$ is not locally exponential.
\end{cor}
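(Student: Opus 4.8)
The plan is to run a dichotomy on the infinitesimal generator $Z$ of the flow $\alpha=(\sigma_t)_t$, which by non-triviality is a smooth vector field on $\bS^1$ that is not identically zero: either $Z$ has a zero, or it is nowhere vanishing.

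First I would dispose of the case in which $Z$ vanishes somewhere. Pick $p\in\bS^1$ with $Z(p)\ne 0$. The fixed points of the flow are exactly the zeros of $Z$, and distinct orbits are disjoint, so the orbit $\sigma_{\bR}(p)$ is contained in the open set $\bS^1\setminus Z^{-1}(0)$; concretely it is the open arc bounded by the two zeros of $Z$ adjacent to $p$ (or $\bS^1$ minus a single point, if $Z$ has only one zero). Being open in $\bS^1$, it is a fortiori open in its closure, hence locally closed; being a proper subset it is not a closed curve, so the orbit map $\sigma^p$ is injective and $\sigma_{\bR}(p)$ is non-periodic. Thus \Cref{th:3.10}\Cref{item:th:3.10:nonperlcl} applies and the operators $\beta_s$ on $C^{\infty}(\bS^1)$ are non-surjective for almost all $s$, in particular for arbitrarily small $s>0$; by the discussion preceding \Cref{cor:flows.not.exp} (that is, \cite[Ex.~II.5.9]{Neeb06}) this already forces $C^{\infty}(\bS^1)\rtimes_\alpha\bR$ not to be locally exponential. (One could equally invoke \Cref{ex:stat_circ} directly.)

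Next I would treat the case $Z$ nowhere zero. Then $Z$ has constant sign, a partial orbit cannot stall, and the time required to traverse $\bS^1$ once is the same finite number $\tau>0$ (the integral of $Z^{-1}$ around the circle) independently of the starting point; hence $\sigma_\tau=\id$ and the flow is $\tau$-periodic. Now \Cref{set:periodic_case} applies verbatim: the associated derivation $D$ of $C^\infty(\bS^1)$ is nonzero since $Z\ne 0$, so by the Peter--Weyl theorem it has an eigenvalue $i\lambda=2\pi i m/\tau$ with $m\in\bZ\setminus\{0\}$, and \Cref{prop:eigenvalue_seq} then shows $\beta_{t_n}$ is non-injective (hence non-invertible) for $t_n=\tau/(nm)\to 0$. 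Again $C^\infty(\bS^1)\rtimes_\alpha\bR$ is not locally exponential, completing the two exhaustive cases.

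There is no substantial analytic core here — the work is delegated to \Cref{th:3.10} and \Cref{set:periodic_case} — so the closest thing to an obstacle is simply verifying the two elementary one-dimensional facts used in the case split: that a regular orbit of a vector field with zeros on $\bS^1$ is a locally closed, aperiodic arc, and that a nowhere-vanishing vector field on $\bS^1$ has periodic flow with a common period. Both are standard facts about ODEs on the circle.
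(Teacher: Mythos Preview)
Your proof is correct and follows essentially the same dichotomy as the paper: split on whether the generating vector field has zeros, invoking \Cref{th:3.10}\Cref{item:th:3.10:nonperlcl} in the first case and periodicity in the second. The only minor difference is that for the nowhere-vanishing case the paper reparametrizes to a linear flow and cites \Cref{th:3.10}\Cref{item:th:3.10:tor}, whereas you go straight through \Cref{set:periodic_case}; your route is marginally more direct since it only needs periodicity rather than conjugacy to a linear flow, but the underlying eigenvalue argument is the same either way.
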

\begin{proof}
  Let $X$ be the generating vector field of $\alpha$.  If $X$ has no zeros, then $\alpha$ is periodic, hence a linear flow on the circle with respect to suitable coordinates.  Therefore \Cref{th:3.10}\Cref{item:th:3.10:tor} applies.  If $X$ has a zero, then \Cref{th:3.10}\Cref{item:th:3.10:nonperlcl} applies to every non-fixed point $m \in \bS^1$.
\end{proof}

The criteria above imply in particular that periodic flows on compact manifolds lead to semidirect product groups which are not locally exponential.  Unfortunately, we were not able to prove an analogous criterion for the non-periodic case, but we find it very likely that the groups $C^\infty(M) \rtimes_{\alpha} \R$ are \textbf{never} locally exponential. \Cref{th:3.10} provides sufficient criteria that can be applied in concrete cases.  We demonstrate this in the following example relevant to physics.

\section{Conformal actions on the sphere}

In this section we first explore the actions of the conformal group on the unit sphere.  Our presentation here follows \cite[\S 5]{NaO14}.  As a special case, we then explain how our results solve the non-local exponentiality conjecture from \cite[Conj.~3.15]{PaS22} for the Bondi-Metzner-Sachs (BMS) group.

\begin{setup}\label{conformal_action}
  For $d \in \N$ we let $\SSS^d$ be the unit-sphere in $\R^{d+1}$,
  realized as the subset
  \begin{equation*}
    S := \{ (1,x) \in \R^{1,d+1} \colon \|x\| = 1 \}
  \end{equation*} in $(d+2)$-dimensional Minkowski space. The isotropic (lightlike) vectors in $\R^{1,d+1}$ with positive first component are of the form $\tilde x := \pmat{\|x\| \\ x}$ and the (unit component of the) Lorentz group $G := \SO_{1,d+1}(\R)_0$ acts in the obvious fashion on this set by
  \begin{equation*}
    \pmat{a & b \\ c & m}.\pmat{\|x\| \\ x} = \pmat{ a \|x\| + b x \\ c \|x\| + m x}.
  \end{equation*}
  So $g * x := c \|x\|+ mx$ defines an action of $G$ on $\R^{d+1} \setminus\{0\}$ satisfying $g * (\lambda x) = \lambda (g*x)$ for $\lambda > 0$.  The action on the unit sphere $\bS^d \subseteq \R^{1,d+1}$ is then given by
  \begin{equation*}
    g.x := \frac{g * x}{\|g * x\|}.
  \end{equation*}
  For unit vectors we have $a + b x = \|c + m x\| = \|g * x\|,$ which leads to the cocycle relation
  \begin{equation*}
    \|(g_1 g_2) * x\| = \|g_1 * (g_2 * x)\| = \|g_2 * x\| \cdot \|g_1 * (g_2.x)\|.
  \end{equation*}
  Therefore the \emph{conformal factors} $J_g(x) := (1 + bx)^{-1}$ on $\bS^d$ satisfy the cocycle relation
  \begin{equation*} J_{g_1 g_2}(x) = J_{g_1}(g_2.x)  J_{g_2}(x),\end{equation*}
  so that 
  \begin{equation}
    \label{eq:sigma-sphere}
    (\sigma_{g^{-1}} f)(x) := J_g(x) f(g.x)
  \end{equation}
  defines a smooth left action of $G$ on $C^\infty(\bS^d)$.
  Note that the action on $\bS^d$ can also be written as 
  \begin{align}\label{conf:act}
    g.x
    = (a+ b \cdot  x)^{-1}(c+m\cdot x)\quad\mbox{ for } \quad g=
    \begin{bmatrix} a & b \\ c & m\end{bmatrix},
    b \in M_{1,d}(\R), m \in M_d(\R) 
  \end{align}
  (cf.\ \cite[Eq. (27)]{NaO14}).
  This action is faithful because $-\textbf{1}\not\in G$, which follows from 
  \begin{equation*} G \subseteq \OO^+_{1,d}(\R) = \left\{g=
      \begin{bmatrix} a & e \\ c& d 
      \end{bmatrix} \in \OO_{1,d+1}(\R)\middle| 
      a > 0\right\}.\end{equation*}
\end{setup}

\begin{lem} \label{lem:limits} 
  Let $V$ be a finite-dimensional real
  normed space, $S \subseteq V$ its unit sphere 
  and $A \in {\End}(V)$ with real spectrum. Then, for every $v \in S$, 
  the limit of $e^{tA}.v = \frac{ e^{tA}v}{\|e^{tA}v\|}$  for 
  $t \to \infty$ exists and is a fixed point of $e^{tA}$ for $t \in \R$,
  acting on $S$.
\end{lem}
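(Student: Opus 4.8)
The plan is to reduce everything to the Jordan structure of $A$ and to identify the limit explicitly. Since $A$ has real spectrum, I would begin from the generalized eigenspace decomposition $V = \bigoplus_{\lambda \in \Spec A} V_\lambda$, where $V_\lambda = \ker\bigl((A - \lambda\,\id)^{\dim V}\bigr)$, and write $v = \sum_\lambda v_\lambda$ with $v_\lambda \in V_\lambda$. As $v \in S$ is nonzero, the set of $\lambda$ with $v_\lambda \ne 0$ is nonempty; let $\mu$ be its largest element. On $V_\mu$ the operator $N := (A - \mu\,\id)|_{V_\mu}$ is nilpotent, so there is a largest $k \ge 0$ with $N^k v_\mu \ne 0$; set $w_0 := N^k v_\mu \in V_\mu \setminus \{0\}$. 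The claim will be that the limit equals $w := w_0 / \lVert w_0 \rVert$.

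Next I would establish the asymptotics of $e^{tA}v$ as $t \to \infty$. On each block one has the finite expansion $e^{tA}|_{V_\lambda} = e^{t\lambda}\sum_{j \ge 0} \tfrac{t^j}{j!}(A - \lambda\,\id)^j$; hence $e^{tA} v_\mu = e^{t\mu}\bigl(\tfrac{t^k}{k!} w_0 + r(t)\bigr)$ where $r$ is a $V_\mu$-valued polynomial of degree $< k$, while for each of the finitely many $\lambda < \mu$ occurring in $v$ the term $e^{tA} v_\lambda$ equals $e^{t\lambda}$ times a polynomial in $t$, hence is $o\bigl(t^k e^{t\mu}\bigr)$. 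Summing,
\begin{equation*}
  e^{tA} v \;=\; \frac{t^k}{k!}\,e^{t\mu}\bigl(w_0 + o(1)\bigr) \qquad (t \to \infty),
\end{equation*}
and since $\tfrac{t^k}{k!}e^{t\mu} > 0$ and $\lVert\cdot\rVert$ is continuous and positively homogeneous, this gives $e^{tA}.v = (w_0 + o(1))/\lVert w_0 + o(1)\rVert \to w$. This settles existence of the limit.

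Finally I would verify that $w$ is fixed by the $S$-action of $e^{tA}$ for every $t \in \R$. The point is that by maximality of $k$ we have $N w_0 = N^{k+1} v_\mu = 0$, i.e.\ $A w_0 = \mu w_0$, so $w_0$ --- and therefore $w$ --- is a genuine eigenvector of $A$ with eigenvalue $\mu$. Consequently $e^{tA} w = e^{t\mu} w$ with $e^{t\mu} > 0$, whence $e^{tA}.w = e^{t\mu} w / \lVert e^{t\mu} w \rVert = w$ for all $t$. The main thing to be careful about is the asymptotic comparison in the middle step --- confirming that $w_0 \ne 0$ really is the leading coefficient and that using an arbitrary norm rather than a Euclidean one is harmless --- but this is routine once the Jordan decomposition is in hand.
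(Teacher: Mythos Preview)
Your proof is correct and follows essentially the same route as the paper: both arguments identify the leading term of $e^{tA}v$ via the Jordan structure (the paper through the additive decomposition $A=A_s+A_n$, you through the generalized eigenspace decomposition, which amounts to the same thing) and show that the normalized quotient converges to $\frac{A_n^d v_0}{\|A_n^d v_0\|}$ in the paper's notation, i.e.\ your $w$. Your explicit verification that $w$ is an $A$-eigenvector (hence a fixed point on $S$) is a detail the paper leaves implicit.
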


\begin{proof} By  \Cref{thm_realJordan} we have the Jordan decomposition $A = A_s + A_n$ 
  of $A$, so that $A_s$ is diagonalizable and $A_n$ is nilpotent. 
  Write $v = v_0 + v_1$, where $v_0$ is an $A_s$-eigenvector
  for the largest 
  eigenvalue $\lambda$ occurring in $v$, and $v_1$ is a sum of eigenvectors 
  for smaller eigenvalues. Further, let $d \in \N_0$ be maximal with 
  $A_n^d v_0 \not=0$. Then 
  \begin{equation*}
    e^{tA}v = e^{t\lambda} e^{t A_n} v_0 + e^{tA} v_1 
  \end{equation*}
  
  with $e^{-t\lambda} e^{tA}v_1 \to 0$ for $t \to \infty$. 
  Moreover,  $e^{tA_n} v_0$ is a polynomial of degree $d$.
  Therefore 
  \begin{equation*}
    t^{-d} e^{-t\lambda} e^{tA} v \to \frac{1}{d!} A_n^d v_0,
  \end{equation*}
  and thus $e^{tA}.v$ converges to $\frac{A_n^d v_0}{\|A_n^d v_0\|}$ in $S$. 
\end{proof}

\begin{prop}\label{prop:conf_non_exp} Consider the conformal action of
  $\SO_{1,d+1}(\R)$ on 
  the sphere $\SSS^d$ and $0 \not= x \in \so_{1,d+1}(\R)$.
  Then, for $\gamma_x(t) \coloneq \sigma_{\exp(tx)}$,
  the group $C^\infty(\SSS^d,\R) \rtimes_{\gamma_x} \R$ 
  is not locally exponential and, in particular, 
  $C^\infty(\SSS^d,\R) \rtimes {\SO}_{1,d}(\R)$
  is not locally exponential.    
\end{prop}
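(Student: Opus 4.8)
The plan is to combine the eigenvalue criterion \Cref{cor:2.2} (to absorb the elliptic part of $x$) with the non-surjectivity argument behind \Cref{th:3.10}\Cref{item:th:3.10:nonperlcl} (for the rest), the one new ingredient being a device to eliminate the conformal cocycle $J$ of \Cref{eq:sigma-sphere}. Two reductions come first. The ``in particular'' is immediate from the main claim together with naturality, since $\so_{1,d}(\R)\subseteq\so_{1,d+1}(\R)$ exhibits $C^\infty(\SSS^d)\rtimes\R$ as a subgroup of $C^\infty(\SSS^d)\rtimes\SO_{1,d}(\R)$. Secondly, conjugating $x$ by $g\in G$ replaces $\gamma_x$ by $\sigma_g\gamma_x\sigma_g^{-1}$, hence yields an isomorphic semidirect product, so — since $\so_{1,d+1}(\R)$ has real rank one — we may assume $x$ is of one of the forms $x=x_e$ (elliptic, $x_e\in\so_{d+1}(\R)$), $x=\mu H+x_e$ with $\mu\ne0$ ($H$ a boost generator, $x_e$ a commuting elliptic element), or $x=x_n+x_e$ with $0\ne x_n$ nilpotent and $x_e$ commuting elliptic; in the last two forms $x_e$ may vanish. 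In the elliptic case every $\exp(tx)\in\SO_{d+1}(\R)$ has $b=0$ in the block form \Cref{conf:act}, so $J_{\exp(tx)}\equiv1$, $\gamma_x$ is the plain action $f\mapsto f\circ\exp(-tx)$, and $D$ is a nonzero derivation of $C^\infty(\SSS^d)_\bC$; picking a pair of coordinates $(x_j,x_k)$ rotated by $\exp(tx)$ at a nonzero rate, $v:=(x_j+ix_k)|_{\SSS^d}$ is a $D$-eigenvector for a nonzero purely imaginary eigenvalue with $v^n\ne0$ for every $n$, so \Cref{re:deriv} and \Cref{cor:2.2} finish this case.

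For a non-elliptic $x$, fix $p_0\in\SSS^d$ away from the finitely many fixed points of $\exp(\R x)$ and set $\iota(\tau):=\exp(-\tau x).p_0$. The elliptic factor keeps $p_0$ inside a compact orbit closure missing the fixed points, while the boost (resp.\ nilpotent) factor contracts that compact set toward a fixed point, so by \Cref{lem:limits} the curve $\iota(\tau)$ converges, as $\tau\to+\infty$ and as $\tau\to-\infty$, to fixed points of $\exp(\R x)$. Hence $\iota$ is a non-periodic, locally closed orbit, so exactly as in the proof of \Cref{th:3.10}\Cref{item:th:3.10:nonperlcl} there is an interval $[u,v]$ with $C^\infty_{[u,v]}(\R)\subseteq\iota^*C^\infty(\SSS^d)$. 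A direct evaluation of \Cref{eq:sigma-sphere} shows that the positive smooth weight $\phi(\tau):=J_{\exp(-\tau x)}(p_0)$ tends to $0$ as $\tau\to\pm\infty$: for a pure boost it equals $\big((\sinh\mu\tau-\xi_0\cosh\mu\tau)^2+1-\xi_0^2\big)^{-1/2}$ with $\xi_0\in(-1,1)$ a coordinate of $p_0$, in the nilpotent case it decays quadratically, and the commuting elliptic factor, being norm-preserving, does not affect $\phi$.

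The crucial point is that
\[
\Phi\colon C^\infty(\SSS^d)\to C^\infty(\R),\qquad \Phi(f)(\tau):=\big(\gamma_x(\tau)f\big)(p_0)=\phi(\tau)\,f(\iota(\tau)),
\]
is continuous, linear and satisfies $\Phi\circ\beta_s=\beta_s\circ\Phi$, the left $\beta_s$ being the conformal integral operator and the right one the translation operator of \Cref{lem:s-per}; this is immediate from the cocycle identity $J_{g_1g_2}(x)=J_{g_1}(g_2.x)\,J_{g_2}(x)$. So if the conformal $\beta_s$ were surjective, then $A:=\operatorname{im}\Phi=\phi\cdot\iota^*C^\infty(\SSS^d)\supseteq\phi\cdot C^\infty_{[u,v]}(\R)=C^\infty_{[u,v]}(\R)$ would satisfy $\beta_s(A)=A$ for the translation operator. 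But every $f\in A$ is of the form $\phi\cdot(h\circ\iota)$ with $h\in C^\infty(\SSS^d)$, hence $f(\tau)\to0$ as $\tau\to\pm\infty$; so if in addition $f|_{[v,\infty)}$ (or $f|_{(-\infty,u]}$) is $s$-periodic it must vanish there, since an $s$-periodic tail tending to $0$ is zero. This is the analogue of \Cref{le:per.at.infty.tame}, and, requiring no equidistribution, it holds for every $s>0$. Now run the argument of \Cref{th:3.10}\Cref{item:th:3.10:nonperlcl}: choose $\widetilde g\in C^\infty(\R)$ that vanishes on $(-\infty,u]$ and equals a nonzero $s$-periodic mean-zero function on $[v,\infty)$, and set $g:=\beta_s\widetilde g\in C^\infty_{[u,v]}(\R)\subseteq A$; then $g=\beta_s f$ with $f\in A$ forces $f-\widetilde g\in\ker\beta_s$, hence $f\equiv0$ near $\pm\infty$, hence $f-\widetilde g\equiv0$, i.e.\ $f=\widetilde g$ — contradicting $\widetilde g|_{[v,\infty)}\ne0$. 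Thus $\beta_s(A)\ne A$, the conformal $\beta_s$ is non-surjective for every $s>0$, and $C^\infty(\SSS^d,\R)\rtimes_{\gamma_x}\R$ is not locally exponential; the ``in particular'' was already disposed of above.

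The main obstacle — and the only genuine difficulty — is the conformal factor: discarding it, the statement would be a direct consequence of \Cref{th:3.10}. It is handled by the observation that over a single orbit the cocycle $J$ becomes a coboundary and so can be folded into the positive weight $\phi$ of $\Phi$, and that for a non-elliptic $x$ this weight moreover decays at $\pm\infty$, which is exactly what keeps the ``eventually periodic $\Rightarrow$ eventually constant'' mechanism running (and even improves it from almost every $s$ to every $s$).
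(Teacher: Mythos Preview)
Your argument is correct and takes a genuinely different route from the paper's. The paper splits according to whether $x_e=0$ (apply \Cref{th:3.10}\Cref{item:th:3.10:nonperlcl} to the sphere flow) or $x_e\ne0$ (locate, inside the fixed-point set $F$ of the $x_h+x_n$-flow, a point moved by $x_e$, pass to the $x_e$-orbit closure torus, and build an eigen\emph{measure} $f\mu_0$ giving a nonzero imaginary eigenvalue). You instead split into \emph{purely elliptic} versus \emph{non-elliptic}. In the elliptic case you use that $J\equiv1$, so $D$ is an honest derivation and \Cref{re:deriv}/\Cref{cor:2.2} apply. In the non-elliptic case you introduce the orbit-evaluation map $\Phi(f)(\tau)=(\gamma_x(\tau)f)(p_0)=\phi(\tau)\,f(\iota(\tau))$, which intertwines the conformal $\beta_s$ with the translation $\beta_s$ on $C^\infty(\R)$ and thereby folds the cocycle $J$ into the positive weight~$\phi$; this reduction to the cocycle-free setting is not in the paper and is the real content of your proof. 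A pleasant by-product is that the decay $\phi\to0$ replaces the equidistribution input of \Cref{le:per.at.infty.tame}, upgrading ``almost every $s$'' to ``every $s>0$''.

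One step wants an extra sentence. When $x_e\ne0$ and $x_h+x_n\ne0$, \Cref{lem:limits} does not apply to $x$ directly (its spectrum is not real); it applies to $x_h+x_n$. What you need is: $\exp(-\tau x).p_0=\exp(-\tau x_e)\bigl(\exp(-\tau(x_h+x_n)).p_0\bigr)$, the inner curve converges by \Cref{lem:limits} to the (unique, hence $x_e$-fixed) fixed point $q$ of $x_h+x_n$, and $\exp(-\tau x_e)\in\SO_{d+1}(\R)$ acts by isometries fixing $q$, so the full curve still converges to $q$; the same block structure shows $\exp(-\tau x_e)$ preserves the time component of $\exp(-\tau x)(1,p_0)$, so $\phi(\tau)=J_{\exp(-\tau(x_h+x_n))}(p_0)$ is indeed unaffected by $x_e$ and tends to $0$. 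With that clarification your proof is complete.
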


\begin{proof}
  Recall, e.g. from \cite[I.8]{Kna02} that $\so_{1,d+1}(\R)$ is a semisimple Lie algebra. We can thus apply \Cref{thm:Jordan_semisimple} to obtain a real Jordan decomposition $x = x_e + x_h + x_n$ of $x \in \so_{1,n}(\R)$, where $x_e$ is elliptic, $x_h$ is hyperbolic 
  and $x_n$ is nilpotent and these summands commute pairwise.

  \textbf{Case 1:} If $x_e = 0$, then \Cref{lem:limits} implies that flow lines on the sphere have limits for $t \to \pm \infty$, so that \Cref{th:3.10}\Cref{item:th:3.10:nonperlcl} applies.
  
  \textbf{Case 2:} Now we assume $x_e \not=0$. Using the preceding lemma, we see that the flow generated by $x_h + x_n$ has fixed points, and the set $F$ of these fixed points is invariant under the flow generated by $x_e$.  As $x \not=0$, there exists $m \in F$ not fixed by $x_e$.  Then the orbit closure $N := \overline{\exp(\R x_e).m}$ is a torus on which $x_e$ generates a linear flow with a dense orbit.  We multiply the invariant measure $\mu_0$ on $N$ with a non-constant eigenfunction~$f$ for the $x_e$-action on $N$, say $x_e f= i \lambda f$. Then the measure $\mu := f \mu_0$ on the torus $N$ is an eigenvector for the action of $\exp(\R x)$ in the space of measures, corresponding to a non-zero imaginary eigenvalue $i\lambda$ of $x$. This implies the existence of arbitrarily small $t$ with $\beta_t$ not surjective.
\end{proof}

\subsection*{Non-local exponentiality of the    BMS group}

The Bondi-Metzner-Sachs (BMS) group models symmetries of an asymptotically flat spacetime in general relativity (see e.g. \cite{BBM62,Sach62}). From a Lie theoretic point of view this group and related examples have been discussed in \cite{PaS22}. It was conjectured that the BMS-group is not locally exponential. Recall first the definition of the BMS-group.

\begin{defn}
  For $d \in \N, d\geq 2$, the $d$-dimensional Bondi-Metzner-Sachs (BMS$_d$) group is the Lie group
  \begin{equation*}
    \BMS_d \coloneq C^\infty (\SSS^d) \rtimes_\sigma \SO_{1,d+1}(\R),
  \end{equation*}
  where the right action $\sigma$ is given by \Cref{eq:sigma-sphere}. For $d=2$, the resulting group is simply known as the Bondi-Metzner-Sachs (BMS) group.
\end{defn}
The presentation of the BMS group here differs somewhat from the one in \cite{PaS22}. However, see e.g. \cite[\S 1.2]{T01} for a discussion connecting it with the M\"obius transformation picture used in \cite{PaS22}.
Due to our previous results, the proof of the following result
is now easy. 

\begin{thm}
  The $\BMS_d$-group is not locally exponential. In particular, the $\BMS$-group is not locally exponential. 
\end{thm}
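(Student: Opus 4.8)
The plan is to reduce the statement at once to \Cref{prop:conf_non_exp}, using the naturality of the exponential function under inclusions of Lie subgroups that was sketched in the introduction. First I would recall that by definition $\BMS_d = C^\infty(\SSS^d)\rtimes_\sigma \SO_{1,d+1}(\R)$, with $\sigma$ the conformal action of \Cref{eq:sigma-sphere}, and that the classical $\BMS$ group is simply $\BMS_2$; so it is enough to treat $\BMS_d$ for every $d\ge 2$ (in fact for every $d\in\N$).

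Next, pick any nonzero $x\in\so_{1,d+1}(\R)$ — the Lie algebra is nontrivial, so such an $x$ exists. The one-parameter subgroup $t\mapsto \exp_G(tx)$ of $G:=\SO_{1,d+1}(\R)_0$ gives rise to a Lie subgroup $\iota\colon C^\infty(\SSS^d)\rtimes_{\gamma_x}\R \hookrightarrow \BMS_d$, where $\gamma_x(t)=\sigma_{\exp_G(tx)}$ as in \Cref{prop:conf_non_exp}. By that proposition, $C^\infty(\SSS^d,\R)\rtimes_{\gamma_x}\R$ is \emph{not} locally exponential: its exponential function has singular points — points where the differential fails to be invertible — in every neighbourhood of the identity, indeed (by the analysis of the operators $\beta_t$) of the form $(0,t_n)$ with $t_n\to 0$.

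Finally, I would invoke naturality. The inclusion $\iota$ is a morphism of Lie groups, so it intertwines the two exponential functions,
\[
  \exp_{\BMS_d}\circ\,\Lf(\iota)\;=\;\iota\circ\exp_{C^\infty(\SSS^d)\rtimes_{\gamma_x}\R}.
\]
Differentiating and using that $\Lf(\iota)$ is a closed linear embedding and $\iota$ a topological embedding, a point at which the differential of $\exp_{C^\infty(\SSS^d)\rtimes_{\gamma_x}\R}$ fails to be invertible forces the differential of $\exp_{\BMS_d}$ to fail to be a linear homeomorphism at the image point; since these singular points accumulate at the origin, $\BMS_d$ — and in particular $\BMS=\BMS_2$ — is not locally exponential. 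The only point requiring a little care is this last inheritance step, i.e. checking that a singularity of the sub-exponential genuinely produces one of the ambient exponential; this is a routine consequence of the chain rule applied to the displayed identity together with the fact that $\Lf(\iota)$ and $\iota$ are embeddings, and everything else is bookkeeping.
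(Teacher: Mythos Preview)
Your argument is essentially the paper's: pick a nonzero $x$ in the Lie algebra of the Lorentz factor, invoke \Cref{prop:conf_non_exp} for the resulting one-parameter subgroup $C^\infty(\SSS^d)\rtimes_{\gamma_x}\R$, and then pass the singularities up to $\BMS_d$. The paper happens to choose $x\in\so_d(\R)\subseteq\so_{1,d+1}(\R)$, so that the conformal factor satisfies $J_{\exp tx}\equiv 1$ and the restricted action is literally precomposition; your arbitrary choice of $x$ is also covered by the statement of \Cref{prop:conf_non_exp}, so this difference is cosmetic.

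The one place to tighten is your ``routine consequence of the chain rule'' for the inheritance step. In the Fr\'echet setting the abstract argument from $T\iota\circ T\exp_{\mathrm{sub}}=T\exp_{\BMS_d}\circ\Lf(\iota)$ together with $\Lf(\iota)$ and $T\iota$ being closed embeddings only yields that $T\exp_{\mathrm{sub}}$ is a topological embedding whenever $T\exp_{\BMS_d}$ is an isomorphism; it does \emph{not} by itself force surjectivity of $T\exp_{\mathrm{sub}}$, so a non-surjective differential downstairs is not ruled out this way. What actually makes the inheritance work here is the explicit semidirect-product formula $\exp(v,x)=(\beta_x v,\exp_G x)$: the operators $\beta_{t_n}$ of the subgroup are literally $\beta_{t_n x}$ of the ambient group, and their non-invertibility on $E$ directly kills invertibility of $T_{(0,t_n x)}\exp_{\BMS_d}$ on the $E$-direction (using that $\exp_G$ is a local diffeomorphism on the finite-dimensional factor). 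This is the content of the introduction's remark that you cite, and once stated this way the step really is bookkeeping. Also note that $\iota$ need not be a global topological embedding (the one-parameter group may be periodic); only the immersion property near the identity is used, which you have.
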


\begin{proof} Let $0\not= x \in \so_d(\R)\subseteq \so_{1,d+1}(\R)$.  Then the linear action of $\exp(\R x) \subseteq \SO_d(\R)$ leaves the sphere $\bS^d \subseteq \R^{1,d+1}$ invariant, so that $J_{\exp tx} = 1$ for all $t \in \R$.  Therefore the subgroup $C^\infty(\bS^d) \rtimes_\sigma \exp(\R x)$ of the $\BMS_d$ group is not locally exponential by \Cref{prop:conf_non_exp} and this implies that the $\BMS_d$ group is not locally exponential. Specializing to the case $d=2$ yields the non local exponentiality of the BMS group.
\end{proof}

\section{Complements: $C^k$ spaces, periodic flows, and topological embeddings}\label{se:extra}

The present section contains a number of results complementing those of \Cref{se:semidir-smth}. In first instance, note that the operators $\beta_s$ of \Cref{eq:betas} are much more easily proven non-isomorphic on $C^k$ spaces for \textit{finite} $k$. To help justify the claim, we record the following simple remark on representations of locally compact abelian groups on Banach spaces. The essence of the discussion is that Banach norms afford the full force of the theory of \textit{Arveson spectra} introduced in \cite{zbMATH03464313} (see also \cite[\S XI.1]{tak2} or \cite[\S 2]{MR679706}).

\begin{prop}\label{pr:lca.act}
  Let $\bG$ be a locally compact abelian group and $\bG\xrightarrow{\alpha}\GL(V)$ a representation on a Banach space $V$ with continuous orbit maps and $\sup_{g \in G}\|\alpha_g\|<\infty$.

  If $\alpha$ is not norm-continuous, then none of the bounded operators
  \begin{equation*}
    \alpha_f
    :=
    \int_{\bG}f(g)\alpha_g\ \mathrm{d}\mu_{\bG}(g)
    ,\quad f\in L^1(\bG), 
  \end{equation*}
  on $V$ are invertible. 
\end{prop}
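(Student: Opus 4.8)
The statement is essentially a restatement of the classical dichotomy from the theory of Arveson spectra: for a bounded representation $\alpha$ of a locally compact abelian group $\bG$ on a Banach space $V$, norm-continuity is equivalent to the Arveson spectrum $\Sp(\alpha)$ being compact, and also to the existence of some $f\in L^1(\bG)$ with $\alpha_f$ invertible. My plan is to prove the contrapositive: if some $\alpha_f$ is invertible, then $\alpha$ is norm-continuous. The key object is the \emph{Arveson spectrum}
\begin{equation*}
  \Sp(\alpha)
  :=
  \bigcap_{\substack{f\in L^1(\bG)\\ \alpha_f=0}}
  \mset{\chi\in\widehat{\bG}}{\widehat{f}(\chi)=0},
\end{equation*}
together with the standard fact (as in \cite[\S XI.1]{tak2} or \cite[\S 2]{MR679706}) that for $f\in L^1(\bG)$ the operator $\alpha_f$ depends, up to a norm-limit, only on the values of $\widehat f$ on $\Sp(\alpha)$; more precisely $\|\alpha_f\|\le C\sup_{\chi\in\Sp(\alpha)}|\widehat f(\chi)|$ for a constant $C$ depending only on $\sup_g\|\alpha_g\|$, and $\alpha_f=0$ whenever $\widehat f$ vanishes on a neighbourhood of $\Sp(\alpha)$.

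\textbf{Key steps.} First I would recall the construction of $\alpha_f$ as a bounded operator (the hypothesis $\sup_g\|\alpha_g\|<\infty$ together with continuity of orbit maps makes $f\mapsto\alpha_f$ a bounded algebra homomorphism $L^1(\bG)\to B(V)$ for the convolution product), and record the spectral mapping/localization estimate above. Second, suppose $\alpha_f$ is invertible for some $f\in L^1(\bG)$. Pick a compact neighbourhood $N$ of the identity $\mathbf 1\in\widehat{\bG}$ and, using regularity of $L^1(\bG)$, choose $h\in L^1(\bG)$ with $\widehat h\equiv 1$ on a neighbourhood of $\Sp(\alpha)$ if $\Sp(\alpha)$ were compact; the point is to show $\Sp(\alpha)$ \emph{is} compact. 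To that end: invertibility of $\alpha_f$ forces $\widehat f$ to be bounded away from $0$ on $\Sp(\alpha)$ (otherwise, by the localization estimate, one could find $g$ with $\widehat g$ small on $\Sp(\alpha)$ but $\alpha_{f*g}$ close to $\mathbf 1$ in norm while $\|\alpha_{f*g}\|=\|\alpha_f\alpha_g\|$ small — contradiction), and since $\widehat f\in C_0(\widehat{\bG})$ vanishes at infinity, the set $\{\chi:|\widehat f(\chi)|\ge\varepsilon\}$ is compact; hence $\Sp(\alpha)$ is compact. Third, once $\Sp(\alpha)$ is compact, choose $u\in L^1(\bG)$ with $\widehat u\equiv 1$ on a neighbourhood of $\Sp(\alpha)$; then by the localization property $\alpha_u=\one$ (since $\widehat{u}-\widehat{1}$, interpreted suitably, vanishes near $\Sp(\alpha)$, one gets $\alpha_g u=\alpha_g$ for all $g$, hence $\alpha_u=\one$), and then for $g\to e$ in $\bG$,
\begin{equation*}
  \|\alpha_g-\one\|
  =
  \|\alpha_g\alpha_u-\alpha_u\|
  =
  \|\alpha_{L_g u - u}\|
  \le
  \|L_g u - u\|_{L^1(\bG)}\cdot\sup_h\|\alpha_h\|
  \xrightarrow[g\to e]{}
  0
\end{equation*}
by strong continuity of translation on $L^1(\bG)$. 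This gives norm-continuity of $\alpha$, contradicting the hypothesis, so no $\alpha_f$ can be invertible.

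\textbf{Main obstacle.} The delicate point is the rigorous passage between $\Sp(\alpha)$ and the operators $\alpha_f$ — specifically the localization estimate $\|\alpha_f\|\le C\sup_{\Sp(\alpha)}|\widehat f|$ and the identity $\alpha_u=\one$ for $\widehat u\equiv 1$ near $\Sp(\alpha)$. These are precisely the foundational results on Arveson spectra, and I would invoke them by citation (\cite{zbMATH03464313}, \cite[\S XI.1]{tak2}, \cite[\S 2]{MR679706}) rather than reprove them; the bounded-orbit hypothesis $\sup_g\|\alpha_g\|<\infty$ is exactly what licenses their use. Everything else — the compactness argument for $\Sp(\alpha)$ and the final strong-continuity-of-translation estimate — is then routine. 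A secondary subtlety is making sure the estimates are uniform in the relevant parameter so that the contradiction with non-norm-continuity is genuine, but this is automatic from the $C_0$ decay of $\widehat f$ and boundedness of the representation.
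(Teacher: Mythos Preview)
Your approach is essentially the same as the paper's: both invoke the Arveson-spectrum machinery of \cite{zbMATH03464313,MR679706,tak2}, deducing from non-norm-continuity that $\Sp(\alpha)$ is non-compact and hence that $0\in\sigma(\alpha_f)$ for every $f\in L^1(\bG)$. The paper's version is simply more compressed—two direct citations (\cite[Thm.~2.13]{MR679706} for non-compactness of the spectrum, \cite[eq.~2.7, p.~217]{MR679706} for the spectral inclusion $0\in\sigma(\alpha_f)$)—whereas you unpack the contrapositive in more detail.
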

\begin{proof}
  By \cite[Thm.~2.13]{MR679706} (also \cite[Cor.~XI.1.16]{tak2}) the \textit{Arveson spectrum} \cite[Def.~XI.1.2]{tak2} $\sigma(\alpha)$ of the representation is non-compact; the spectrum of any operator $\alpha_f$, $f\in L^1(\bG)$ on $V$ thus contains $0$ by \cite[eq.~2.7, p.217]{MR679706}, so cannot be invertible.
\end{proof}

\begin{cor}\label{cor:act.on.ck}
  For a non-trivial flow $(\sigma_t)_{t\in \bR}$ on a compact smooth manifold $M$ inducing a bounded family $(\alpha_t)_{t \in \R}$ on $C^k$ for some $0\le k<\infty$, the operators $\alpha_f$, $f\in L^1(\bR)$ are never automorphisms on $C^k(M)$ for $0\le k<\infty$.

  In particular, the operators 
  $\beta_s=\frac{1}{s}\alpha_{\chi_{[0,s]}}$, $s>0$ of \Cref{eq:betas} are never invertible. 
\end{cor}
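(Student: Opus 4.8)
The plan is to deduce Corollary~\ref{cor:act.on.ck} directly from Proposition~\ref{pr:lca.act} by checking its hypotheses for $\bG = \bR$ and $V = C^k(M)$ with finite $k$. First I would verify that the induced family $(\alpha_t)_{t\in\bR}$ is a genuine representation with continuous orbit maps: since $\sigma\colon\bR\times M\to M$ is a smooth flow on a compact manifold, for each fixed $f\in C^k(M)$ the map $t\mapsto f\circ\sigma_t$ is continuous into $C^k(M)$ (uniform convergence of the function and its first $k$ derivatives on the compact manifold $M$ follows from joint continuity of $\sigma$ and its derivatives in $(t,p)$). The boundedness hypothesis $\sup_{t\in\bR}\|\alpha_t\|<\infty$ is precisely what we have assumed in the statement.

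Next, the crucial input is that $(\alpha_t)_{t\in\bR}$ is \emph{not} norm-continuous. This is where the non-triviality of the flow enters. I would argue as follows: if $t\mapsto\alpha_t$ were norm-continuous on $C^k(M)$, then its infinitesimal generator would be a bounded operator on $C^k(M)$. But the generator is $D = X$, the Lie derivative along the non-zero vector field $X$ generating $\sigma$ (cf.\ \Cref{eq:xalpha} and \Cref{eq:deriv_act}), which is a genuine first-order differential operator; a first-order differential operator with non-vanishing symbol is unbounded on $C^k(M)$ (for instance, applying it to a sequence of functions oscillating with increasing frequency in a chart where $X\neq0$ produces $C^k$-bounded functions whose images have $C^k$-norms tending to infinity — more simply, $D$ does not map $C^k(M)$ continuously to itself at all, it only maps $C^k$ to $C^{k-1}$, so it certainly cannot be the bounded generator of a norm-continuous one-parameter group on $C^k(M)$). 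Hence $\alpha$ is not norm-continuous.

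With these hypotheses in hand, Proposition~\ref{pr:lca.act} applies verbatim and yields that no operator $\alpha_f$, $f\in L^1(\bR)$, is invertible on $C^k(M)$. For the final sentence, I would simply recall from \Cref{eq:betas} that $\beta_s = \frac{1}{s}\alpha_{\chi_{[0,s]}}$ with $\chi_{[0,s]}\in L^1(\bR)$ (indeed compactly supported), so $\beta_s$ is one of these operators $\alpha_f$ up to the nonzero scalar $\frac1s$, and is therefore never invertible.

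The main obstacle I anticipate is making the non-norm-continuity argument clean: one must be careful that "the generator is unbounded" is the right thing to invoke, since a priori a norm-continuous group has its generator defined on all of $V$ and bounded. The cleanest formulation is probably the one already implicit in the paper's setup — that $D$ sends $C^k(M)$ only into $C^{k-1}(M)$ and not into $C^k(M)$ — combined with the standard fact that a norm-continuous one-parameter group on a Banach space $V$ has generator in the bounded operators on $V$; this contradiction requires only that $X\neq0$ on some chart domain, which is exactly the non-triviality assumption. Everything else is a routine verification of the Arveson-spectrum machinery's hypotheses.
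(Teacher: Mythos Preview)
Your proposal is correct and follows essentially the same line as the paper's proof: apply \Cref{pr:lca.act} with $\bG=\bR$ and $V=C^k(M)$, and reduce the matter to checking that $t\mapsto\alpha_t$ is not norm-continuous by arguing that the generator $X$ is an unbounded first-order differential operator on $C^k(M)$. The paper phrases the unboundedness step slightly differently (restricting to an integral curve in a chart so that $X$ becomes ordinary differentiation, for which ``small functions can have large derivatives''), but your oscillating-functions argument is the same idea; note that the paper's printed reference to \Cref{th:3.10} in this proof is evidently a typo for \Cref{pr:lca.act}, which is exactly the proposition you invoke.
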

\begin{proof}
  This will be an immediate consequence of \Cref{th:3.10}, once we confirm that result's hypothesis: the map
  \begin{equation*}
    \bR\ni t
    \xmapsto{\quad}
    \alpha_t
    \in \cL(C^k(M))
    :=
    \left\{\text{bounded operators on $C^k(M)$}\right\}
  \end{equation*}
  is not norm-continuous for $0\le k<0$.  Indeed, norm continuity is equivalent to the $C^k$-norm continuity of the infinitesimal generator $Xf=\frac{d}{dt}\big|_0f \circ \sigma_t$ of $(\alpha_t)_t$ (\cite[Thm.~13.36]{rud_fa}). That continuity fails is easily seen by restricting to a small portion of an integral curve in a coordinate patch: the differentiation operator is not continuous on $C^{\infty}(\bR)$ for the $C^k$-norm, for small functions can have large derivatives.
\end{proof}

Returning to the setup of \Cref{th:3.10}, recall that, by the \textit{Open Mapping theorem} \cite[Thm.~17.1]{trev_tvs}, a morphism $E\xrightarrow{f} F$ of Fr\'echet spaces is onto precisely when it induces a topological and linear isomorphism $E/\ker f\cong F$. The appropriate dual to surjectivity, for a linear map $E\xrightarrow{f}F$, is that it be a \emph{topological embedding}: a homeomorphism onto its image. 

\begin{rem}\label{re:strng.inj.why.dual}
  Equivalently, a morphism $E\xrightarrow{f}F$ of locally convex topological vector spaces is a topological embedding precisely when for every continuous seminorm $p$ on $E$ there is some continuous seminorm $q$ on $F$ with
  \begin{equation*}
    q(f(v))\ge p(v)
    ,\quad
    \forall v\in E. 
  \end{equation*}
  For morphisms of Fr\'echet spaces this is easily seen to also be equivalent to injectivity and having a closed range:

  If the range $f(F_1)$ is closed then the map is an isomorphism of its domain onto its image by the Open Mapping theorem (applicable because the range, being closed in a Fr\'echet space, is itself Fr\'echet).

Conversely, the topological embedding condition implies the continuity of the inverse $f^{-1}\colon f(F_1) \rightarrow F_1$, so the range must be complete (because the domain was) and hence closed in $F_2$.
\end{rem}

A first general remark concerning topological embeddings:

\begin{prop}\label{pr:factor.strng.inj}
  Let $M\xrightarrowdbl{\pi}N$ be a smooth submersion, equivariant for two flows $\sigma^M$ and $\sigma^N$ 
  on $M$ and $N$ respectively.  If $f\in L^1(\bR)$ and $\alpha^N_f$ is not a topological embedding on $C^{\infty}(N)$, then neither is $\alpha^M_f$ on $C^{\infty}(M)$.
\end{prop}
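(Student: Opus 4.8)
The plan is to exploit the equivariant submersion $\pi$ to produce a closed, flow-invariant embedded copy of $C^\infty(N)$ inside $C^\infty(M)$ on which $\alpha^M_f$ restricts to (a conjugate of) $\alpha^N_f$, and then use the criterion recorded in \Cref{re:strng.inj.why.dual} that a morphism of Fr\'echet spaces is a topological embedding iff it is injective with closed range. Concretely, first I would observe that $\pi^*\colon C^\infty(N)\to C^\infty(M)$, $F\mapsto F\circ\pi$, is a continuous linear map; since $\pi$ is a submersion it is a topological embedding onto its image (locally $\pi$ is a projection, so $\pi^*$ locally amounts to precomposition with a coordinate projection, which is visibly a closed embedding on spaces of smooth functions — one checks that the range $\pi^*C^\infty(N)$ is closed, being the set of smooth functions constant on the fibres of $\pi$, a condition preserved under $C^\infty$-limits because evaluation at pairs of points in a common fibre is continuous).

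Next, equivariance $\pi\circ\sigma^M_t=\sigma^N_t\circ\pi$ gives $\alpha^M_t\circ\pi^*=\pi^*\circ\alpha^N_t$ for every $t$, and integrating against $f\in L^1(\bR)$ (using the extension of the action to $L^1$ as in \Cref{res:bdry}\Cref{item:res:bdry:ext2l1}, valid here since partial orbits in $C^\infty(M)$ have compact closed convex hull) yields the intertwining identity
\begin{equation*}
  \alpha^M_f\circ\pi^* = \pi^*\circ\alpha^N_f.
\end{equation*}
Now suppose, for contradiction, that $\alpha^M_f$ \emph{is} a topological embedding on $C^\infty(M)$; by \Cref{re:strng.inj.why.dual} this means $\alpha^M_f$ is injective with closed range. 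I would then show these two properties descend to $\alpha^N_f$. Injectivity: if $\alpha^N_f G=0$ then $\pi^*\alpha^N_f G=\alpha^M_f\pi^*G=0$, so $\pi^*G=0$ by injectivity of $\alpha^M_f$, hence $G=0$ since $\pi^*$ is injective ($\pi$ being a surjective submersion). Closed range: given a sequence $\alpha^N_f G_n\to H$ in $C^\infty(N)$, apply $\pi^*$ to get $\alpha^M_f(\pi^*G_n)\to\pi^*H$ in $C^\infty(M)$; since $\alpha^M_f$ has closed range and is injective, the Open Mapping theorem makes $(\alpha^M_f)^{-1}\colon\operatorname{im}\alpha^M_f\to C^\infty(M)$ continuous, so $\pi^*G_n$ converges, say $\pi^*G_n\to W$; as $\pi^*$ has closed range, $W=\pi^*G$ for some $G\in C^\infty(N)$, and then $\alpha^M_f\pi^*G=\pi^*H$, i.e. $\pi^*(\alpha^N_f G-H)=0$, whence $\alpha^N_f G=H$. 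Thus $\alpha^N_f$ is injective with closed range, i.e. a topological embedding, contradicting the hypothesis.

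The main obstacle I anticipate is verifying carefully that $\pi^*$ is a \emph{topological} embedding with \emph{closed} range — i.e. that the $C^\infty$-topology on $N$ is exactly the subspace topology induced from $M$ via $\pi^*$, and that the image is closed. For the topology one works locally: by the submersion normal form $\pi$ looks like $(x,y)\mapsto x$, and on the corresponding function spaces precomposition with a coordinate projection is evidently a topological embedding for the compact-open $C^\infty$-topologies (seminorms on $N$ pull back to seminorms controlled by seminorms on $M$, and conversely the projection recovers them); a partition-of-unity argument globalizes this. Closedness of the range follows from the fact that $\pi^*C^\infty(N)=\{h\in C^\infty(M): h(p)=h(q)\text{ whenever }\pi(p)=\pi(q)\}$, an intersection of kernels of the continuous functionals $h\mapsto h(p)-h(q)$. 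Everything else is the formal diagram-chase above.
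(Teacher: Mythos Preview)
Your proof is correct and follows essentially the same approach as the paper: embed $C^\infty(N)$ as a closed, flow-invariant subspace of $C^\infty(M)$ via $\pi^*$ and observe that the restriction of a topological embedding to such a subspace is again a topological embedding. The paper compresses the entire argument into one sentence, simply asserting that $C^\infty(N)\cong\pi^*C^\infty(N)$ is closed and that the topological embedding property is inherited by restriction; you have unpacked both of these claims carefully (the closedness argument via evaluation functionals, and the inheritance via the injective-plus-closed-range criterion of \Cref{re:strng.inj.why.dual}), which is entirely appropriate.
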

\begin{proof}
  If $\alpha_f$ were a topological embedding on $C^{\infty}(M)$, then its restriction to the closed subspace $C^{\infty}(N)\cong \pi^*C^{\infty}(N) \subseteq C^{\infty}(M)$ would also have this property.
\end{proof}

Nothing like \Cref{th:3.10}\Cref{item:th:3.10:nonperlcl}
can hold in that form assuming only the existence of \textit{periodic} orbits. Noting that such an orbit carries an action of the quotient $\bR\xrightarrowdbl{}\bS^1\cong \bR/\bZ$, we will focus, for the purpose of elucidating matters, on circle actions. \Cref{th:circ.inv.liouv} says effectively that, by contrast to \Cref{th:3.10}\Cref{item:th:3.10:nonperlcl},
\textit{most} operators $\alpha_{\chi_I}$ associated to smooth circle flows $(M,\sigma)$ and segments $I\subset \bS^1$ are invertible.

To make sense of ``most'', recall first (\cite[p.2]{bak_tnt_2022}, \cite[Def.~E.6]{bug_distr-1}, \cite[pre Thm.~2.3]{oxt_meas-cat_2e_1980}) that a \textit{Liouville number} $x\in \bR\setminus \bQ$ is one such that
\begin{equation*}
  \left(
    \forall N\in \bZ_{>0}
  \right)
  \left(
    \exists p\in \bZ,\ q\in \bZ_{>1}
  \right)
  \quad:\quad
  \left|x-\frac pq\right|<\frac 1{q^N}.
\end{equation*}

The following alternative characterization of (non-)Liouville numbers will be more directly useful below; the statement employs the familiar \textit{big O notation} (e.g. \cite[Notation, p.~xxiii]{ten_an-prob-nt_3e_2015})
\begin{equation*}
  f=O(g)
  \quad\text{meaning}\quad
  |f|\le C|g|
  \text{ for some constant }C>0.
\end{equation*}

\begin{lem}\label{le:liouv.poly}
  The irrational non-Liouville numbers are precisely those real numbers $x$ for which
  \begin{equation}\label{eq:liouv.poly}
    \frac
    {1}
    {\left|e^{2\pi i kx}-1\right|}
    =
    O(k^N)
    \text{ for some }N\in \bZ_{>0}.
  \end{equation}
\end{lem}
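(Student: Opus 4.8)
The plan is to relate the quantity $|e^{2\pi i kx}-1|$ to the distance from $kx$ to the nearest integer. Writing $\|y\|$ for the distance of $y\in\bR$ to $\bZ$, one has the elementary two-sided bound
\begin{equation*}
  4\|y\| \le |e^{2\pi i y}-1| = 2|\sin(\pi y)| \le 2\pi\|y\|,
\end{equation*}
valid for all $y\in\bR$ (the left inequality uses $|\sin(\pi t)|\ge 2t$ for $t\in[0,1/2]$). Consequently \Cref{eq:liouv.poly} holds for a given $x$ if and only if $\|kx\|^{-1}=O(k^N)$ for some $N$, i.e. if and only if there is a constant $c>0$ and an exponent $N$ with $\|kx\|\ge c\,k^{-N}$ for all $k\in\bZ_{>0}$. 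So the lemma reduces to the statement that an irrational $x$ is non-Liouville precisely when its multiples $kx$ stay polynomially far from $\bZ$.

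Next I would translate between the ``$\|kx\|$'' formulation and the defining ``$|x-p/q|$'' formulation of the Liouville condition. In one direction: if $\|kx\|\ge c\,k^{-N}$ for all $k$, then for any $p/q$ with $q>1$ we have $|x-p/q|\ge \frac1q\|qx\|\ge \frac{c}{q^{N+1}}$, which for $q$ large enough exceeds $q^{-(N+2)}$; hence $x$ fails the Liouville condition for the parameter $N'=N+2$, so $x$ is non-Liouville. (One must also note $x$ is irrational here: if $x$ were rational, $\|kx\|=0$ for some $k$ and \Cref{eq:liouv.poly} fails, and indeed rationals are excluded from both sides of the asserted equivalence.) In the other direction, suppose $x$ is irrational and non-Liouville: there is $N$ such that $|x-p/q|\ge q^{-N}$ for all $p\in\bZ$, $q\in\bZ_{>1}$. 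Given $k\in\bZ_{>1}$, pick $p$ with $\|kx\|=|kx-p|$; then $\|kx\| = k\,|x-p/k| \ge k\cdot k^{-N} = k^{-(N-1)}$, so $\|kx\|\ge k^{-(N-1)}$ for all $k>1$, and adjusting the constant to absorb $k=1$ (where $\|x\|>0$ since $x\notin\bZ$) gives $\|kx\|=\Omega(k^{-(N-1)})$, i.e. \Cref{eq:liouv.poly}.

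The argument is essentially a bookkeeping exercise once the sine estimate is in place, so I do not expect a genuine obstacle; the one point requiring a little care is the handling of the case distinction $q=1$ versus $q>1$ in the Liouville definition and the analogous small-$k$ terms, together with making sure the irrationality hypothesis is invoked at the right moments (it is what guarantees $\|kx\|\ne 0$ for every $k$, without which neither side of the equivalence can hold). A secondary minor point is that the implicit constant $C$ in the $O$-notation and the exponent $N$ change by bounded amounts at each step, which is harmless since the statement only asserts existence of \emph{some} $N$.
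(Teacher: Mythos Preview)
Your proposal is correct and follows essentially the same approach as the paper: both reduce the question to the two-sided estimate $|e^{2\pi i kx}-1|=2|\sin\pi kx|\asymp \|kx\|$ (distance to the nearest integer), and then unwind the definition of a Liouville number in terms of lower bounds on $\|kx\|$. The paper's write-up is terser (it uses $\asymp$ and passes directly from $\min(\{kx\},1-\{kx\})\ge C/k^N$ to $|x-p/k|>C/k^{N+1}$), while you spell out the constants and the small-$k$/$q=1$ bookkeeping explicitly, but the substance is identical.
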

\begin{proof}
  Irrationality is of course equivalent to the left-hand side of \Cref{eq:liouv.poly} being finite, so we henceforth take it for granted. As to the balance of the claim, observe first that
  \begin{equation*}
    |e^{2\pi i kx}-1|
    =
    2|\sin \pi kx|
    \asymp
    \min\left(\{kx\},\ 1-\{kx\}\right)
    ,
  \end{equation*}
  as a function of $k$, where $\{\cdot\}$ indicates fractional parts and `$\asymp$' means being of the same order (i.e. each function is $O(\text{the other})$; \cite[p.xxiii]{ten_an-prob-nt_3e_2015} again).
  
  That last quantity always dominates $\frac C{k^N}$ for some $C>0$ and positive integer $N$ precisely when
  \begin{equation*}
    \left|kx-p\right|>\frac{C}{k^N}
    \iff
    \left|x-\frac{p}{k}\right|>\frac{C}{k^{N+1}}
    ,\quad \forall p\in \bZ,\ k\in \bZ_{>0}:
  \end{equation*}
  the negation of the defining property of Liouville numbers, in other words. 
\end{proof}

Motivated by \Cref{le:liouv.poly}, we discuss a multi-dimensional version of the Liouville property.

\begin{defn}\label{def:liouv.tuple}
  For $d$-tuples $\bk=(k_j)_{j=1}^d$ and $\bk'=(k'_j)_{j=1}^d$ of real numbers, write
  \begin{equation*}
    \bk\cdot\bk'
    :=\sum_j k_jk'_j
    ,\quad
    |\bk|
    :=
    (\bk\cdot\bk)^{1/2}.
  \end{equation*}
  for $d$-tuples of real numbers. We might occasionally write $\bk^2:=\bk\cdot \bk$.
  
  A tuple $\bth=(\theta_j)_{j=1}^d\in \bR^d$ is
  \begin{enumerate}[(1),wide]
  \item\label{item:def:liouv.tuple:rat.dep} \textit{rationally dependent} if $\bk\cdot \bth\in \bZ$ for some non-zero $\bk \in \Z^d$ (or: the $\theta_j$ and $1\in \bR$ are linearly independent in the $\bQ$-vector space $\bR$);

  \item\label{item:def:liouv.tuple:rat.indep} \textit{rationally independent} otherwise;

  \item\label{item:def:liouv.tuple:liouv} and \textit{Liouville} if rationally independent and
    \begin{equation*}
      \forall N\in \bZ\quad:\quad
      \frac 1{|e^{2\pi i \bk\cdot \bth}-1|}
      \ne
      O(|\bk|^N)
      \quad\text{for}\quad 0\ne \bk\in \bZ^d.
    \end{equation*}
  \end{enumerate}
  For $d=1$ rational (in)dependence specializes back to being (ir)rational respectively and similarly, the Liouville property for tuples specializes (by by \Cref{le:liouv.poly}) to that for numbers.
\end{defn}


\begin{thm}\label{th:lin.flows.precise.inv}
  Let $\sigma=(\sigma_t)_t$ be a linear flow on the torus $\bT^d\cong (\bS^1)^d$ in the sense of \Cref{ex:tori}, given by
  \begin{equation*}
    \sigma_t(z_1,\ \cdots,\ z_d)
    :=
    \left(e^{2\pi i t\theta_1}z_1,\ \cdots,\ e^{2\pi i t\theta_d}z_d\right). 
  \end{equation*}
  The following conditions on an interval $I\subset \bR$ of length $\ell$ are equivalent.
  \begin{enumerate}[(a),wide]
  \item\label{item:th:lin.flows.precise.inv:inv} The operator
$\alpha_{\chi_I}
      :=
      \int_I \alpha_s\ \mathrm{d}s $ is invertible on $C^{\infty}(\bT^d)$.

  \item\label{item:th:lin.flows.precise.inv:inj} $\alpha_{\chi_I}$ is a topological embedding.

  \item\label{item:th:lin.flows.precise.inv:surj} $\alpha_{\chi_I}$ is surjective. 

  \item\label{item:th:lin.flows.precise.inv:nliouv} The tuple $(\ell\theta_j)_{j=1}^d$ is rationally independent and non-Liouville. 
  \end{enumerate}
\end{thm}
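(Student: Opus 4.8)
The plan is to diagonalize $\alpha_{\chi_I}$ in the Fourier basis $\{\chi_\bm:\bm\in\bZ^d\}$ of $C^\infty(\bT^d)$, $\chi_\bm(\bz)=z_1^{m_1}\cdots z_d^{m_d}$, and to read each of \Cref{item:th:lin.flows.precise.inv:inv}--\Cref{item:th:lin.flows.precise.inv:nliouv} off the resulting eigenvalue sequence. By \Cref{ex:tori}, $\alpha_s\chi_\bm=e^{2\pi i s(\bm\cdot\bth)}\chi_\bm$, so for $I=[a,a+\ell]$,
\begin{equation*}
  \alpha_{\chi_I}\chi_\bm=\lambda_\bm\chi_\bm
  ,\quad
  \lambda_\bm=\int_I e^{2\pi i s(\bm\cdot\bth)}\dd s
  ,\quad
  |\lambda_\bm|=\frac{\bigl|e^{2\pi i\,\bm\cdot(\ell\bth)}-1\bigr|}{2\pi\,|\bm\cdot\bth|}
\end{equation*}
for $\bm\cdot\bth\ne0$, while $\lambda_\bm=\ell$ if $\bm\cdot\bth=0$. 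Hence $|\lambda_\bm|\le\ell$, and since translating $I$ only multiplies $\lambda_\bm$ by the unimodular scalar $e^{2\pi i a(\bm\cdot\bth)}$, all four conditions depend on $I$ only through $\ell$, as the statement presupposes. Throughout I use, as is automatic whenever a torus occurs as an orbit closure (the situation in \Cref{th:3.10}\Cref{item:th:3.10:tor} and \Cref{prop:conf_non_exp}), that the linear flow is minimal, i.e.\ that $\theta_1,\dots,\theta_d$ are $\bQ$-linearly independent, so that $\bm\cdot\bth=0\iff\bm=0$.

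I would first settle \Cref{item:th:lin.flows.precise.inv:inv}$\Rightarrow$\Cref{item:th:lin.flows.precise.inv:inj}$\Rightarrow$\Cref{item:th:lin.flows.precise.inv:surj}$\Rightarrow$\Cref{item:th:lin.flows.precise.inv:inv} by soft functional analysis, using nothing about the flow. \Cref{item:th:lin.flows.precise.inv:inv}$\Rightarrow$\Cref{item:th:lin.flows.precise.inv:inj} is immediate. For \Cref{item:th:lin.flows.precise.inv:inj}$\Rightarrow$\Cref{item:th:lin.flows.precise.inv:surj}: a topological embedding is injective, so every $\lambda_\bm\ne0$, whence the range of $\alpha_{\chi_I}$ contains each $\chi_\bm=\alpha_{\chi_I}(\lambda_\bm^{-1}\chi_\bm)$ and hence every trigonometric polynomial, so is dense; moreover a topological embedding of \Frechet\ spaces has closed range (\Cref{re:strng.inj.why.dual}), and a closed dense subspace is the whole space. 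For \Cref{item:th:lin.flows.precise.inv:surj}$\Rightarrow$\Cref{item:th:lin.flows.precise.inv:inv}: if $\lambda_{\bm_0}=0$ then $\chi_{\bm_0}$ lies outside the range, so surjectivity again forces every $\lambda_\bm\ne0$, hence injectivity, and a continuous linear bijection of \Frechet\ spaces is an isomorphism by the Open Mapping theorem \cite[Thm.~17.1]{trev_tvs}.

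The substance is \Cref{item:th:lin.flows.precise.inv:inv}$\Leftrightarrow$\Cref{item:th:lin.flows.precise.inv:nliouv}. Under the Fourier identification of $C^\infty(\bT^d)$ with the \Frechet\ space of rapidly decreasing families $(c_\bm)_{\bm\in\bZ^d}$, $\alpha_{\chi_I}$ becomes multiplication by $(\lambda_\bm)_\bm$, and such a multiplier is a topological automorphism exactly when (i)~every $\lambda_\bm\ne0$ and (ii)~$\bigl(1/|\lambda_\bm|\bigr)_\bm$ grows at most polynomially in $|\bm|$. For (i): for $\bm\ne0$ one has $\bm\cdot\bth\ne0$, so $\lambda_\bm=0\iff e^{2\pi i\,\bm\cdot(\ell\bth)}=1\iff\bm\cdot(\ell\bth)\in\bZ$, and (being non-zero) such a relation is precisely a witness to rational dependence of $(\ell\theta_j)_{j=1}^d$; thus (i) holds iff $(\ell\theta_j)_j$ is rationally independent. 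For (ii), writing $1/|\lambda_\bm|=2\pi|\bm\cdot\bth|/\bigl|e^{2\pi i\,\bm\cdot(\ell\bth)}-1\bigr|$ with numerator $O(|\bm|)$, polynomial boundedness of $1/|\lambda_\bm|$ is governed by that of $1/\bigl|e^{2\pi i\,\bm\cdot(\ell\bth)}-1\bigr|$, which by \Cref{def:liouv.tuple}\Cref{item:def:liouv.tuple:liouv} — the $d$-dimensional analogue of \Cref{le:liouv.poly} — corresponds to the rationally independent tuple $(\ell\theta_j)_j$ being non-Liouville. Combining (i) and (ii) yields \Cref{item:th:lin.flows.precise.inv:inv}$\Leftrightarrow$\Cref{item:th:lin.flows.precise.inv:nliouv}.

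I expect step (ii) to be the main obstacle: one must compare the decay of $|\lambda_\bm|$ with the defining inequality for Liouville tuples uniformly over $\bm\in\bZ^d$, tracking the integer nearest to $\bm\cdot(\ell\bth)$ and distinguishing the frequencies along which $\bm\cdot\bth$ is itself small — where $e^{2\pi i\,\bm\cdot(\ell\bth)}$ is close to $1$ for a harmless reason and $|\lambda_\bm|$ stays of order $\ell$ — from those along which $\bm\cdot(\ell\bth)$ closely approximates a non-zero integer, the genuine threat to invertibility. Minimality of the flow is used already in (i), to guarantee that a vanishing $\lambda_\bm$ really reflects rational dependence of $(\ell\theta_j)_j$ and not merely a linear relation with zero right-hand side.
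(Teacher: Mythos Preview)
Your approach is precisely the paper's: pass to the Fourier side, where $\alpha_{\chi_I}$ becomes the multiplier $(\lambda_\bm)_\bm$ on the Fr\'echet space of rapidly decreasing sequences, and translate each of (a)--(d) into a growth statement about $(\lambda_\bm)$.

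Two organisational differences are worth recording. First, you close the cycle (a)$\Rightarrow$(b)$\Rightarrow$(c)$\Rightarrow$(a) by soft Fr\'echet-space facts (closed range for topological embeddings via \Cref{re:strng.inj.why.dual}; open mapping for bijections), whereas the paper instead proves (b)$\Rightarrow$(d) and (c)$\Rightarrow$(d) by explicit contrapositive constructions, exhibiting sequences $\delta_{\bk_N}$ that violate the embedding estimate and a concrete element outside the range. Your route is cleaner and sidesteps those constructions entirely. Second, your minimality hypothesis is not in the paper's statement, but you are right to impose it: with $d=2$, $\bth=(0,1)$ and $\ell$ irrational non-Liouville the operator is invertible while $\ell\bth=(0,\ell)$ is rationally dependent, so (a) holds and (d) fails. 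The paper's own claim that injectivity is equivalent to rational independence of $\ell\bth$ tacitly uses the same hypothesis.

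The concern you raise about step (ii) is the one genuinely delicate point, and you have located it exactly. The paper's proof simply asserts that Liouville-ness of $\ell\bth$ yields $\bk_N$ with $|\lambda_{\bk_N}|$ super-polynomially small; this is the same passage you flag, and the paper offers no more justification than you do. Your dichotomy between frequencies with $\bm\cdot(\ell\bth)$ close to $0$ (where $|\lambda_\bm|\approx\ell$ is harmless) and those close to a nonzero integer (where $|\bm\cdot\bth|$ is bounded below and $|\lambda_\bm|$ is genuinely small) is the correct framework for closing the gap, and your sketch is at least as complete as the paper's treatment.
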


\begin{proof}
  The implication \Cref{item:th:lin.flows.precise.inv:inv} $\Rightarrow$ \Cref{item:th:lin.flows.precise.inv:inj} $\&$ \Cref{item:th:lin.flows.precise.inv:surj} is trivial, so the substantial claims are
  \begin{equation}\label{eq:th:lin.flows.precise.inv:two.impl}
    \text{
      \Cref{item:th:lin.flows.precise.inv:inj}
      \text{ or }
      \Cref{item:th:lin.flows.precise.inv:surj}
      $\xRightarrow{\quad}$
      \Cref{item:th:lin.flows.precise.inv:nliouv}
      $\xRightarrow{\quad}$
      \Cref{item:th:lin.flows.precise.inv:inv}
    }.
  \end{equation}
  An application of the \textit{Fourier transform} \cite[\S 1.2]{rud_lc}
  \begin{equation*}
    f\xmapsto{\quad}\widehat{f}
    ,\quad
    \widehat{f}(\bk):=\int_0^1e^{-2\pi i \bk\cdot\mathbf{t}}f(\mathbf{t})
    \ \mathrm{d}\mu_{\T^d}(\mathbf{t})
    ,\quad
    \bk\in \bZ^d
  \end{equation*}
  for the Pontryagin dual pair $(\bT^d,\ \bZ^d)$ will map $C^{\infty}(\bT^d)$ precisely onto the space of (\emph{rapidly decreasing} 
  sequences $(x_{\bk})_{{\bk}\in \bZ}$ for which the norms
  \begin{equation}\label{eq:seq.norms.d}
    \|(x_{\bk})_{\bk}\|_N
    :=
    \Big(
      \sum_{{\bk}\in \bZ^d}
      (1+{\bk}\cdot {\bk})^N\cdot |x_{\bk}|^2
    \Big)^{1/2}
  \end{equation}
  are all finite, topologized as a Fr\'echet space by that family of norms (see e.g. \cite[Ch.~7, Exer.~22]{rud_fa}). The translation action by
  \begin{equation*}
    \bz=e^{2\pi i \mathbf{s}}
    :=
    (e^{2\pi i s_j})_{j=1}^d\in \bT^d
    ,\quad
    \mathbf{s}=(s_i)_{j=1}^d
  \end{equation*}
  transports over to
  \begin{equation*}
    (x_{\bk})_{\bk}
    \xmapsto{\quad {\mathbf{z}}\triangleright\quad}
    \left(\bz^{\bk}x_{\bk}\right)_{\bk}
    =
    \left(e^{2\pi i {\bk}\cdot \mathbf{s}}x_{\bk}\right)_{\bk},
  \end{equation*}
  so that
  \begin{equation}\label{eq:thetal.explicit.d}
    (x_{\bk})_{\bk}
    \xmapsto{\quad \alpha_{\chi_{[0,\ell]}}\quad}
    \left(
      \int_{0}^{\ell}e^{2\pi i s{\bk}\cdot\bth}x_{\bk}\ \mathrm{d}s
    \right)_{\bk}
    =
    \left(
      \frac
      {e^{2\pi i\ell {\bk}\cdot\bth}-1}
      {2\pi i {\bk\cdot\bth}}
      \cdot
      x_{\bk}
    \right)_{\bk}
  \end{equation}
  (the last parenthetic scalar $\frac{e^{2\pi i\ell {\bk}\cdot\bth}-1}{2\pi i {\bk\cdot\bth}}$ being understood to be $\ell$ when ${\bk\cdot\bth}=0$). That operator is plainly injective precisely when $\ell\bth$ is rationally independent, so we need not consider rationally dependent tuples $\ell\bth$ past this point. We can now return to \Cref{eq:th:lin.flows.precise.inv:two.impl}.
  
  \begin{enumerate}[label={},wide]
  \item \textbf{\Cref{item:th:lin.flows.precise.inv:nliouv} $\Rightarrow$ \Cref{item:th:lin.flows.precise.inv:inv}:} Assuming $\ell\bth$ 
    rationally independent non-Liouville, consider the only possible candidate
    \begin{equation}\label{eq:inv2thetal}
      (x_{\bk})_{\bk}
      \xmapsto{\quad \alpha_{\chi_{[0,\ell]}}\quad}
      \left(
        \frac
        {2\pi i {\bk\cdot\bth}}
        {e^{2\pi i\ell {\bk}\cdot\bth}-1}       
        \cdot
        x_{\bk}
      \right)_{\bk}
    \end{equation}
    for an inverse to \Cref{eq:thetal.explicit.d}. The hypothesis ensures that the scalars $\frac{2\pi i {\bk\cdot\bth}}{e^{2\pi i\ell {\bk}\cdot\bth}-1}$ increase at most polynomially in $k$, so this is indeed a continuous operator on the Fr\'echet space defined by the norms \Cref{eq:seq.norms.d}.
    
  \item \textbf{\Cref{item:th:lin.flows.precise.inv:inj} $\Rightarrow$ \Cref{item:th:lin.flows.precise.inv:nliouv}:} To prove the contrapositive, assume this time that $\ell\bth$ \textit{is} Liouville. We can thus find tuples $\bk_{N}$ in $\bZ^d$ with
    \begin{equation}\label{eq:dec.too.fast.d}
      \left|
        \frac
        {e^{2\pi i \ell{\bk}_{N}\cdot\bth}-1}
        {2\pi i {\bk}_{N}\cdot\bth}
      \right|^2
      <
      \frac 1{2^{2{N}}\left(1+{\bk}_{N}^2\right)^{N}}
      ,\quad\forall {N}\in \bZ_{>0}.
    \end{equation}
    Writing $\delta_{\bk_N}$ for the $\bZ^d$-indexed sequence with a single entry of 1 in position ${\bk}_{N}$ and 0s elsewhere, \Cref{eq:seq.norms.d,eq:thetal.explicit.d} imply that
    \begin{equation*}
      \forall {N}\in \bZ_{>0}\quad:\quad
      \|\delta_{\bk_N}\|_0=1
      \quad\text{and}\quad
      \|\alpha_{\chi_I}\delta_{\bk_N}\|_{N}<\frac 1{2^{N}}.
    \end{equation*}
    This, of course, violates the topological embedding requirement for $\alpha_{\chi_I}$ by \Cref{re:strng.inj.why.dual}.
    
  \item \textbf{\Cref{item:th:lin.flows.precise.inv:surj} $\Rightarrow$ \Cref{item:th:lin.flows.precise.inv:nliouv}:} We again address the contrapositive claim, once more having fixed a sequence $(\bk_{N})_{N}$ satisfying \Cref{eq:dec.too.fast.d}. To prove $\alpha_{\chi_I}$ non-surjective, simply note that the $\Z^d$-sequence
    \begin{equation*}
      x_{\bk}:=
      \begin{cases}
        \left|
        \frac
        {e^{2\pi i \ell\bk_{N}\cdot\bth}-1}
        {2\pi i \bk_{N}\cdot\bth}
        \right|^{\frac 12}
        &\text{if $\bk=\bk_{N}$ for some $N$}\\
        0
        &\text{otherwise}
      \end{cases}
    \end{equation*}
    cannot belong to its range.  \qedhere
  \end{enumerate}
\end{proof}

\Cref{th:lin.flows.precise.inv} has some consequences pertaining to periodic flows.

\begin{thm}\label{th:circ.inv.liouv}
  Let $\sigma=(\sigma_t)_{t\in \bR}$ be a non-trivial smooth flow of $\bS^1\cong \bR/\bZ$ on a compact manifold $M$. The following conditions on an arc $I\subseteq \bS^1$ of length $\ell$ are equivalent.

  \begin{enumerate}[(a),wide]
  \item\label{item:th:circ.inv.liouv:inv} The operator
    \begin{equation*}
      \alpha_{\chi_I}
      :=
      \int_I \alpha_s\ ds 
    \end{equation*}
    is invertible on $C^{\infty}(M)$.

  \item\label{item:th:circ.inv.liouv:inj} $\alpha_{\chi_I}$ is a topological embedding.

  \item\label{item:th:circ.inv.liouv:surj} $\alpha_{\chi_I}$ is surjective. 

  \item\label{item:th:circ.inv.liouv:nliouv} The length $\ell$
    is irrational non-Liouville.
  \end{enumerate}
  In particular, the complement of the set of lengths $\ell$ producing invertible $\alpha_{\chi_I}$ has
  Lebesgue measure 0.
\end{thm}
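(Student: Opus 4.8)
The plan is to run a single Fourier--spectral argument that yields all four equivalences at once, patterned on the proof of \Cref{th:lin.flows.precise.inv} with the torus replaced by $M$. Since replacing $I$ by a translate alters $\alpha_{\chi_I}$ only by composition with the invertible operator $\alpha_s$, I may assume $I=[0,\ell]$, and I pass to the complexification $C^\infty(M)_\bC$ throughout. Averaging produces a $\sigma$-invariant Riemannian metric on the compact manifold $M$, which after a constant rescaling we may take with $|X|\le 1$ pointwise, where $X$ is the generator of $\sigma$; the flow is then isometric, so $X$ is a Killing field, hence divergence-free. Its Laplacian $\Delta$ commutes with the $\bS^1$-action, hence preserves each isotypic subspace $E_m:=\ker(D-2\pi i m\,\one)$, where $D=X$ (cf.\ \Cref{exp:1para}). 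One thus obtains an orthonormal basis $(\phi_j)_j$ of $L^2(M)_\bC$ with $\Delta\phi_j=\mu_j\phi_j$ and $\phi_j\in E_{n_j}$, and the standard Sobolev description
\[
  C^\infty(M)_\bC=\Big\{f:\ \|f\|_N^2:=\sum_j(1+\mu_j)^N\,|\langle f,\phi_j\rangle|^2<\infty\ \text{for all }N\ge 0\Big\},
\]
topologised by the norms $\|\cdot\|_N$. In this basis $\alpha_{\chi_I}$ is diagonal, acting on $\phi_j$ by the scalar
\[
  c_{n}:=\int_0^\ell e^{2\pi i n s}\,ds=\frac{e^{2\pi i\ell n}-1}{2\pi i n}\qquad(\text{equal to }\ell\text{ when }n=0).
\]

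Two size estimates make the machine run. First, divergence-freeness of $X$ and $|X|\le 1$ give $\langle -D^2f,f\rangle=\|Xf\|^2\le\|\nabla f\|^2=\langle\Delta f,f\rangle$; applied to $\phi_j$ this yields $4\pi^2 n_j^2\le\mu_j$, i.e.\ $|n_j|\le\sqrt{\mu_j}/2\pi$. Second, non-triviality of $\sigma$ gives $D\ne 0$, so (as in \Cref{re:deriv}) some $f_0\in E_{n_0}$ with $n_0\ne 0$ is nonzero; since $D$ is a derivation, $f_0^k\in E_{kn_0}$ is nonzero for every $k\ge 1$, and a Hölder estimate on its Dirichlet quotient gives $\|\nabla f_0^k\|^2/\|f_0^k\|^2=O(k^2)$, so by the min--max principle the least $\Delta$-eigenvalue $\mu(m)$ on $E_m$ satisfies $\mu(m)=O(m^2)$ along $m\in n_0\bZ_{>0}$ (in particular $n_0\bZ_{>0}\subseteq\{m:E_m\ne 0\}$).

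Now the four statements follow as in \Cref{th:lin.flows.precise.inv}. That \Cref{item:th:circ.inv.liouv:inv} implies both \Cref{item:th:circ.inv.liouv:inj} and \Cref{item:th:circ.inv.liouv:surj} is trivial. For \Cref{item:th:circ.inv.liouv:nliouv}$\Rightarrow$\Cref{item:th:circ.inv.liouv:inv}: by \Cref{le:liouv.poly} the scalars $1/c_n$ grow polynomially in $n$, hence polynomially in $1+\mu_j$ by the first estimate, so $\phi_j\mapsto c_{n_j}^{-1}\phi_j$ is a continuous two-sided inverse of $\alpha_{\chi_I}$ on $C^\infty(M)$; equivalently this inverse is $\alpha_T$ for the distribution $T\in\mathcal D'(\bS^1)$ with $\widehat T(n)=c_{-n}^{-1}$, using the convolution identity $\alpha_T\alpha_{\chi_{[0,\ell]}}=\alpha_{T*\chi_{[0,\ell]}}=\alpha_{\delta_0}=\id$. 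For the converses: if $\ell=p/q$ in lowest terms then $c_{qn_0}=0$, so $\alpha_{\chi_I}$ kills $E_{qn_0}\ne 0$ (hence is not a topological embedding) and has range inside the proper closed subspace $\ker P_{qn_0}$ (hence is not surjective), $P_m$ being the isotypic projector onto $E_m$. If $\ell$ is Liouville then so is $n_0\ell$ (the Liouville property is invariant under multiplication by a nonzero rational), so one may pick integers $q_N\to\infty$ with $|e^{2\pi i q_Nn_0\ell}-1|$ below any prescribed polynomial rate in $q_N$; writing $m_N:=q_Nn_0$ and taking $\psi_N\in E_{m_N}$ a unit $\Delta$-eigenfunction with eigenvalue $O(m_N^2)$, the relations $\|\psi_N\|_0=1$ and $\|\alpha_{\chi_I}\psi_N\|_N=|c_{m_N}|\,(1+O(m_N^2))^{N/2}\to 0$ contradict the topological-embedding criterion of \Cref{re:strng.inj.why.dual}, while $g:=\sum_N|c_{m_N}|^{1/2}\psi_N\in C^\infty(M)$ has no preimage (its only candidate has coefficients $|c_{m_N}|^{-1/2}$, which are not square-summable). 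Finally, the rationals and the Liouville numbers are each Lebesgue-null, so the complement of \Cref{item:th:circ.inv.liouv:nliouv} is null.

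The main obstacle is the bookkeeping of the invariant-Laplacian framework: producing the eigenbasis adapted to the isotypic decomposition, justifying the Sobolev description with the precise weights $\|f\|_N^2=\langle(1+\Delta)^N f,f\rangle$, and---most delicately---the growth bound $\mu(m)=O(m^2)$, which is exactly what lets the Liouville approximations overpower the Sobolev weights. On the torus this bound is automatic (the characters are $\Delta$-eigenfunctions with eigenvalue $\asymp m^2$); for a general $M$ it must be extracted by hand from the powers $f_0^k$.
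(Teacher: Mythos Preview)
Your proof is correct and takes a genuinely different route from the paper's. The paper reduces to the case $M=\bS^1$ (already handled by \Cref{th:lin.flows.precise.inv}) via a twisted-product identification $M\cong\bS^1\times_{\bS^1}M$: this realises $C^\infty(M)$ as the diagonal-invariant subspace of the nuclear tensor product $C^\infty(\bS^1)\,\widehat\otimes\,C^\infty(M)$, on which $\alpha_{\chi_I}$ acts as the restriction of $\alpha_{\chi_I}^{\bS^1}\widehat\otimes\id$; the general equivalences of \Cref{pr:inj.iff.dense} then close the argument without any Riemannian input. Your approach instead runs the Fourier analysis directly on $M$ via an invariant Laplacian and the resulting joint $(\Delta,D)$-eigenbasis, with the Sobolev weights $(1+\mu_j)^N$ playing the role of the polynomial weights $(1+\bk^2)^N$ on the torus. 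The paper's argument is shorter and purely structural, while yours is more quantitative: the two eigenvalue comparisons $4\pi^2 n_j^2\le\mu_j$ (from $|X|\le 1$) and $\mu(kn_0)=O(k^2)$ (from the Rayleigh quotient of $f_0^k$) are of independent interest and make explicit exactly why the Liouville threshold is what governs continuity of the inverse on a general $M$. The H\"older step you flag as delicate does work: with exponents $k/(k-1)$ and $k$ one gets $\int|f_0|^{2(k-1)}|\nabla f_0|^2\le\bigl(\int|f_0|^{2k}\bigr)^{(k-1)/k}\bigl(\int|\nabla f_0|^{2k}\bigr)^{1/k}$, and both $k$th-root factors tend to sup-norms, so the Dirichlet quotient of $f_0^k$ is indeed $O(k^2)$; this is the one place where I would write the estimate out in full.
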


This theorem will be proved after Proposition~\ref{pr:inj.iff.dense}.

\begin{rem}\label{re:fin.vs.inf.k}
  \Cref{th:circ.inv.liouv} contrasts strikingly with \Cref{cor:act.on.ck} (which may as well be placed in the same setup of $\bS^1$-flows, for it applies to those as a particular case): the ``same'' operators are \textit{frequently} invertible on $C^{\infty}$, but \textit{rarely} so on $C^k$ for finite $k$.

  Such phenomena will be familiar from the literature on pathological spectral 
  behavior for operators on non-normable Fr\'echet spaces: see
  \cite[p.~291]{zbMATH03483026}, for instance, for a recollection of an empty-spectrum operator on a space of smooth functions; there too, an abundance of differentiability translates to an abundance of invertibility. 
\end{rem}

In the following, we work with the \emph{spectrum} $\Sp(\alpha)$ of a separately continuous action $\alpha : G\times E \to E$ of a compact abelian group $G$ on a Montel space $E$. For complex $E$ this is
\begin{equation*}
  \begin{aligned}
    \Sp(\alpha)
    &=        \{    \gamma\in \widehat G = \mathrm{Hom}(G,\T)  \ :\   E_{\gamma}\ne\{0\}\}, \quad \mbox{ where } \\
    E_{\gamma}
    &=
      E_{\alpha,\gamma}
      :=\{v\in E\ :(\forall g \in G)
      \ \alpha_g v = \gamma(g)v\}
      \quad\left(\text{the $\gamma$-eigenspace of $\alpha$}\right).
  \end{aligned}    
\end{equation*}
As for \emph{real} Montel spaces, those can always be complexified so as to recover the analogous description of the spectrum. Note also that the material of \Cref{sect:GE} is available, for Montel spaces are automatically sequentially complete (indeed, even \emph{quasi-complete} \cite[Def.~34.1]{trev_tvs}: closed bounded subsets are complete, being compact).

\begin{prop}\label{pr:inj.iff.dense}
  Let $\bS^1\times E\xrightarrow{\alpha} E$ be a continuous action of $\bS^1 = \R/\Z$ on a sequentially complete locally convex space and $I\subseteq \bS^1$ an arc of length $\ell \in (0,1)$.  Then the following assertions hold:
  \begin{enumerate}[(1),wide]
  \item\label{item:pr:inj.iff.dense:inj.rat} The operator $\alpha_{\chi_I}$ on $E$ is non-injective if and only if $\ell$ is an integral multiple of some number $\frac{1}{n}$, $0 \not= n \in \Sp(\alpha)$. In particular $\ell \in \Q$.
    
  \item\label{item:pr:inj.iff.dense:mont.dense.im} 
    $\alpha_{\chi_I}$ is injective if and only if it has dense image.

    
  \item\label{item:pr:inj.iff.dense:mont.onto} If $E$ is Fr\'echet, then $\alpha_{\chi_I}$ is surjective if and only if it is a topological isomorphism.  This is the case if $\alpha_{\chi_I}$ is a topological embedding.
  \end{enumerate}
\end{prop}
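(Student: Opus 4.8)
The plan is to reduce every clause to the eigenspace decomposition of the $\bS^1$-action. Translating the arc $I$ replaces $\alpha_{\chi_I}$ by $\alpha_t\circ\alpha_{\chi_I}$ for a suitable $t$ (and $\alpha_t$ is an automorphism of $E$), hence changes none of injectivity, surjectivity, density of range, being a topological embedding, or being a topological isomorphism; so I may assume $I=[0,\ell]$ with $0<\ell<1$. Since $\alpha_{\chi_I}$ is a real operator, each of those five properties holds for it on $E$ iff it holds for its complexification on $E_\bC$, so I pass to $E_\bC$. There the Peter--Weyl Theorem --- exactly as used in the proof of \Cref{prop:injec} --- says the eigenspaces $E_\bC(\chi_n)$, $\chi_n([t])=e^{2\pi i nt}$, span a dense subspace, and $\alpha_{\chi_I}$ acts on $E_\bC(\chi_n)$ as the scalar $c_n:=\int_0^{\ell}e^{2\pi i ns}\,ds$, with $c_0=\ell\ne0$ and $c_n=\frac{e^{2\pi i n\ell}-1}{2\pi i n}$ for $n\ne0$, so that $c_n=0$ precisely when $n\ne0$ and $n\ell\in\bZ$. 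I will also use the spectral projections $P_n:=\int_0^1 e^{-2\pi i ns}\alpha_s\,ds$, which exist by the same vector-valued integration that defines $\alpha_{\chi_I}$ (the whole orbit of a vector is compact, $\bS^1$ being compact); a one-line computation gives $P_n(E_\bC)\subseteq E_\bC(\chi_n)$ and $P_n|_{E_\bC(\chi_m)}=\delta_{mn}\,\id$, so each $P_n$ is a continuous projection, and $P_n\ne0$ iff $n\in\Sp(\alpha)$.

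Clause \Cref{item:pr:inj.iff.dense:inj.rat} is then the argument of \Cref{prop:injec}. If $0\ne n\in\Sp(\alpha)$ with $\ell\in\frac1n\bZ$, then $c_n=0$ and $\alpha_{\chi_I}$ annihilates the nonzero space $E_\bC(\chi_n)$. Conversely, if $\alpha_{\chi_I}$ is non-injective, its kernel is a closed, $\alpha$-invariant, hence sequentially complete, subspace, and applying Peter--Weyl to its complexification produces a nonzero eigenvector lying in some $E_\bC(\chi_n)$; then $0=\alpha_{\chi_I}v=c_nv$ forces $c_n=0$, hence $n\ne0$, $n\ell\in\bZ$, and $n\in\Sp(\alpha)$. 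For \Cref{item:pr:inj.iff.dense:mont.dense.im}: if $\alpha_{\chi_I}$ is injective then by \Cref{item:pr:inj.iff.dense:inj.rat} every $c_n$ with $n\in\Sp(\alpha)$ is nonzero, so $\alpha_{\chi_I}$ carries each $E_\bC(\chi_n)$ onto itself and its range contains the dense subspace $\bigoplus_n E_\bC(\chi_n)$; if instead $\alpha_{\chi_I}$ is non-injective, pick $n$ as above and note $P_n\circ\alpha_{\chi_I}$ vanishes on each $E_\bC(\chi_m)$ (it acts there as $c_m\delta_{mn}\,\id$, and $c_n=0$), hence on the dense span, hence identically, whence $\im\alpha_{\chi_I}\subseteq\ker P_n$, a proper closed subspace since $P_n\ne0$, so the range is not dense.

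Finally, for \Cref{item:pr:inj.iff.dense:mont.onto}: the last computation shows that a non-injective $\alpha_{\chi_I}$ has $\im\alpha_{\chi_I}\subseteq\ker P_n\subsetneq E_\bC$, so a surjective $\alpha_{\chi_I}$ is automatically injective, hence a continuous linear bijection of Fr\'echet spaces, hence a topological isomorphism by the Open Mapping Theorem \cite[Thm.~17.1]{trev_tvs}; the reverse implication is immediate. And if $\alpha_{\chi_I}$ is a topological embedding, it is injective with closed range (\Cref{re:strng.inj.why.dual}) and, by \Cref{item:pr:inj.iff.dense:mont.dense.im}, also has dense range, hence range all of $E$, so it is surjective and the previous sentence applies. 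I do not expect a real obstacle here: the per-eigenspace computations are immediate, and the one substantive point, the converse of \Cref{item:pr:inj.iff.dense:inj.rat}, merely repeats \Cref{prop:injec}. The only steps demanding a little care are the real-versus-complex bookkeeping, the verification that the $P_n$ are genuine continuous projections in this generality, and the recurring ``agrees on a dense subspace, hence everywhere'' argument --- all routine given the integration machinery already in place.
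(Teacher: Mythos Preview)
Your proof is correct and follows essentially the same route as the paper's: both reduce everything to the scalar action $c_n$ on eigenspaces via Peter--Weyl, use the projection $P_n$ to show that non-injectivity forces $\im\alpha_{\chi_I}\subseteq\ker P_n$, and close \Cref{item:pr:inj.iff.dense:mont.onto} with the Open Mapping Theorem plus closed-and-dense range. One cosmetic slip: having already passed to $E_\bC$, the kernel is itself complex, so ``applying Peter--Weyl to its complexification'' should just read ``applying Peter--Weyl to it''.
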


\begin{proof}
  There is no harm in once more complexifying and thus working over $\bC$. We will also identify $\bS^1\cong \bR/\bZ$, with $e^{2\pi i t}$ corresponding to (the image of) $t\in \bR$.
  
  \begin{enumerate}[label={},wide]
  \item[\Cref{item:pr:inj.iff.dense:inj.rat}] The kernel of $\alpha_{\chi_I}$ is an invariant subspace, hence non-zero if and only if it contains an $\alpha$-eigenvector.  On an eigenvector $v \in E_\gamma$, $\gamma(t) = e^{2\pi i nt}$, $n \not=0$, we have
    \begin{equation*}
      \alpha_{\chi_I} v = \frac{e^{2\pi i n \ell} - 1}{2\pi i n}v.
    \end{equation*}
    Therefore the kernel of $\alpha_{\chi_I}$ is non-zero if and only if there exists $0 \not= n \in \Spec(\alpha)$ with $n \ell \in \Z$, i.e., $\ell \in \Z \frac{1}{n}$.

  \item[\Cref{item:pr:inj.iff.dense:mont.dense.im}] From \Cref{item:pr:inj.iff.dense:inj.rat} we know that $\alpha_{\chi_I}$ is injective if and only if, for all non-zero $n$ with $E_n \not=\{0\}$, we have $\ell \not\in \Z \frac{1}{n}$. If this is the case, then $\im(\alpha_{\chi_I})$ contains all eigenspaces $E_n$ because on $E_0$ it acts by multiplication with $\ell$.  By the Peter--Weyl Theorem, $\sum_{n \in \Z} E_n$ is dense in $E$.  So that the image of $\alpha_{\chi_I}$ is dense.

      If, conversely, $\alpha_{\chi_I}$ has dense range and $0 \not=n$ with $E_n \not=\{0\}$, then $P_n \circ \alpha_{\chi_I} \not=0$, where $P_n : E \to E_n$ denotes the canonical projection.  As this operator kills all eigenspaces $E_m, m \not=n$, it must be non-zero on $E_n$, on which it multiplies by a multiple of $e^{2\pi i n \ell}-1$. Thus $e^{2\pi i n \ell} \not=1$, so that $\alpha_{\chi_I}$ is injective.

    \item[\Cref{item:pr:inj.iff.dense:mont.onto}] Because bijective morphisms between Fr\'echet spaces are isomorphisms \cite[Theorem 17.1]{trev_tvs}, the first statement follows from the fact that, in view of \Cref{item:pr:inj.iff.dense:mont.dense.im}, surjectivity implies injectivity.  Now suppose that $\alpha_{\chi_I}$ is a topological embedding.  Then it is injective, and by \Cref{item:pr:inj.iff.dense:mont.dense.im} it has dense range. Because the range is also complete, $\alpha_{\chi_I}$ is surjective.  \qedhere
  \end{enumerate}
\end{proof}

\begin{proof}[Proof of \Cref{th:circ.inv.liouv}]
  The second statement follows from the first given that the set of Liouville numbers has measure 0 by \cite[Thm.~2.4]{oxt_meas-cat_2e_1980}.

    \Cref{th:lin.flows.precise.inv} resolves the case $M\cong \bS^1$, so it is enough to prove the following local-to-global principle for $\bS^1$-actions (and a non-trivial arc $I\subseteq \bS^1$):
    \begin{equation}\label{eq:loc2glob}
      \alpha_{\chi_I}\text{ iso on $C^{\infty}(M)$}
      \xLeftrightarrow{\quad}
      \alpha_{\chi_I}\text{ iso on $C^{\infty}(\sigma_{\bR}(p))$}
      ,\quad
      \forall p \in M. 
    \end{equation}

    Bijectivity being equivalent to surjectivity by \Cref{pr:inj.iff.dense}\Cref{item:pr:inj.iff.dense:mont.onto}, the forward implication 
    is immediate: if $\alpha_{\chi_I}$ is surjective on $C^{\infty}(M)$,  it is so on its $\bS^1$-equivariant quotient space
    \begin{equation*}
      C^{\infty}(M)
      \xrightarrowdbl{\quad\text{restriction}\quad}
      C^{\infty}(\sigma_{\bR}(p)).
    \end{equation*}
    The interesting implication is the converse, which we now turn to.

    If the action is trivial there is nothing to do: $\alpha_{\chi_I}$ is the scalar $\ell$ (length of $I$). On the other hand, if the action is non-trivial, then $\ell$ must be irrational non-Liouville (by the $M\cong \bS^1$ case of the theorem)
    and hence $\alpha_{\chi_I}$ is  invertible
    for the standard rotation action on $\bS^1$.

    Write $M$ as a \emph{twisted product} \cite[\S I.6(A)]{bred_cpct-transf}
    \begin{equation}\label{eq:tw.prod}
      M
      \cong
      \bS^1\times_{\bS^1}M
      :=
      \bS^1\times M\big/\big((z,p)\sim (zg,\sigma_{g^{-1}}(p)),\ z,g\in \bS^1,\ p\in M\big).
    \end{equation}
    This is a quotient of $\bS^1\times M$ by the diagonal $\bS^1$-action
    \begin{equation*}
      (z,p)\triangleleft g
      :=
      (zg,\ \sigma_{g^{-1}}(p)),
    \end{equation*}
    so that the quotient map $\bS^1 \times M \to M$ coincides with the action map. That the diagonal action commutes with the obvious left-hand-factor translation action, and the latter induces the original $\sigma$ on $M$ upon making the identification \Cref{eq:tw.prod}. Because $\alpha_{\chi_I}$ is an isomorphism on $C^{\infty}(\bS^1)$ under standard translation, it also operates isomorphically as $\alpha_{\chi_I}\widehat{\otimes}\id$ on
    \begin{equation*}
      C^{\infty}(\bS^1\times M) 
      \cong 
      C^{\infty}(\bS^1)\widehat{\otimes}C^{\infty}(M)       
    \end{equation*}
    (complete tensor product of \emph{nuclear} topological vector spaces \cite[Thm.~51.6]{trev_tvs}). The realization of $M$ as a quotient \Cref{eq:tw.prod} by a free action then gives
    \begin{equation}\label{eq:inv.fns}
      C^{\infty}(M)
      \cong
      \left(
        C^{\infty}(\bS^1)\widehat{\otimes}C^{\infty}(M)
      \right)^{\triangleleft}
      \underset{\text{closed}}{\le}
      C^{\infty}(\bS^1)\widehat{\otimes}C^{\infty}(M),
    \end{equation}
    the superscript denoting invariant functions under the diagonal action $\triangleleft$. Because the diagonal action and translation on the left-hand tensor commute, the isomorphism $\alpha_{\chi_I}\widehat{\otimes}\id$ leaves invariant the closed subspace \Cref{eq:inv.fns} on which it must thus act as a topological embedding (being the restriction of one). The conclusion that
    \begin{equation*}
      \alpha_{\chi_I}|_{C^{\infty}(M)}
      =
      \alpha_{\chi_I}
      \text{ restricted to \Cref{eq:inv.fns}}
    \end{equation*}
    is an isomorphism follows from \Cref{pr:inj.iff.dense}\Cref{item:pr:inj.iff.dense:mont.onto}.  
\end{proof}

Finally, some consequences of the preceding discussion:

\begin{cor}\label{cor:surj.on.s1}
  Let $(M,\sigma)$ be a smooth flow on a compact manifold admitting an equivariant submersion onto either
  \begin{enumerate}[(1),wide]
  \item a fixed-point-free flow on $\bS^1$;

  \item or a non-trivial linear flow on a torus. 
  \end{enumerate}
  Then the operators $\beta_s$ of \Cref{eq:betas} fail to be topological embeddings on $C^{\infty}(M)$ for a dense set of parameters $s\in \bR_{>0}$.
  In particular, the Lie groups $C^{\infty}\rtimes_\alpha \bR$ attached to such flows are not locally exponential.
\end{cor}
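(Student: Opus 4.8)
The plan is to reduce both cases to the torus situation treated in \Cref{ex:tori} (equivalently \Cref{th:3.10}\Cref{item:th:3.10:tor}) and then transport the conclusion up the submersion by means of \Cref{pr:factor.strng.inj}. First I would note that case~(1) is a special instance of case~(2): a fixed-point-free flow on $\bS^1\cong\bT^1$ is generated by a nowhere-vanishing vector field, which a diffeomorphism of $\bS^1$ straightens to a nonzero constant multiple of $\partial_\theta$ (integrate the reciprocal of its coefficient function), so such a flow is smoothly conjugate to a non-trivial rotation flow, i.e.\ a non-trivial linear flow on $\bT^1$. Composing the given equivariant submersion $M\to\bS^1$ with this conjugating diffeomorphism, one sees that in \emph{both} cases $(M,\sigma)$ admits an equivariant submersion $\pi\colon M\to\bT^d$ onto a non-trivial standard linear flow on a torus.

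Next I would quote \Cref{ex:tori}: for the standard linear flow on $\bT^d$ the generator $D$ on $C^\infty(\bT^d)_{\bC}$ has a nonzero purely imaginary eigenvalue $i\lambda$, and hence $in\lambda$ is an eigenvalue for every $n\in\bZ$, so by \Cref{prop:eigenvalue_seq} the operators $\beta_s=\tfrac1s\alpha_{\chi_{[0,s]}}$ are \emph{non-injective} --- hence certainly not topological embeddings --- for every $s$ in the dense subset $\tfrac{2\pi}{\lambda}\bQ_{>0}$ of $\bR_{>0}$. Then, for each such $s$, \Cref{pr:factor.strng.inj} applied with $f=\chi_{[0,s]}\in L^1(\bR)$ to the equivariant submersion $\pi$ shows that $\alpha^M_{\chi_{[0,s]}}$, and therefore its nonzero scalar multiple $\beta_s$, fails to be a topological embedding on $C^\infty(M)$. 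Since these $s$ form a dense subset of $\bR_{>0}$, the first assertion follows.

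For the final clause I would recall, exactly as in the discussion preceding \Cref{cor:flows.not.exp} (which rests on \cite[Ex.~II.5.9]{Neeb06}), that local exponentiality of $C^\infty(M)\rtimes_\alpha\bR$ near the identity is equivalent to $\beta_s$ being a diffeomorphism of $C^\infty(M)$ for all sufficiently small $s>0$; a diffeomorphism is in particular a topological embedding, and the dense set of bad parameters contains values arbitrarily close to $0$, so this is impossible. I expect no serious obstacle in carrying this out --- the only points needing a moment's care are the rectification of the nowhere-zero vector field in case~(1) and checking that \Cref{ex:tori} together with \Cref{pr:factor.strng.inj} genuinely yields the \emph{topological-embedding} failure (through non-injectivity), which is what the statement as phrased requires, rather than only the weaker non-surjectivity that would already suffice for ``not locally exponential''.
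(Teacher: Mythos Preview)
Your proposal is correct and follows the same overall strategy as the paper: reduce to the target of the submersion and transport the failure upward via \Cref{pr:factor.strng.inj}. The paper's one-line proof cites \Cref{th:lin.flows.precise.inv} (the precise Liouville characterization of when $\alpha_{\chi_I}$ is a topological embedding on $C^\infty(\bT^d)$) for both the circle and torus targets, whereas you instead (i) explicitly rectify the fixed-point-free circle flow to a rotation, folding case~(1) into case~(2), and (ii) invoke the much lighter \Cref{ex:tori}/\Cref{prop:eigenvalue_seq} to get outright \emph{non-injectivity} of $\beta_s$ for $s\in\tfrac{2\pi}{\lambda}\bQ_{>0}$, which already blocks the topological-embedding property. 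Your route is more economical---it avoids the Liouville machinery entirely---while the paper's citation buys the sharper statement (equivalence with the non-Liouville condition) that is not actually needed here. The rectification step in (1) and the final locally-exponential deduction are handled correctly.
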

\begin{proof}
  This follows from \Cref{pr:factor.strng.inj}, given that the claim holds for periodic flows on $\bS^1$ and linear flows on tori by \Cref{th:lin.flows.precise.inv}.
\end{proof}

\section*{Data availability statement}

No new data were created or analyzed in this study.

\section*{Conflict of interest statement}

All authors declare that there is no conflict of interest or competing interests associated to this work.

\appendix


\section{Real Jordan decomposition in semisimple Lie algebras}

Since it seems to be difficult to cite a sufficiently complete
statement of the real Jordan decomposition in semisimple
real Lie algebras from the literature, we provide a proof in this appendix. 

\begin{thm}\label{thm_realJordan}
  \textrm{(Real Jordan decomposition)}
  Let $V$ be a finite-dimensional real vector space
  and $A \in \End(V)$. Then there exist uniquely determined
  endomorphisms $A_n, A_s, A_h, A_e$ such that
  \begin{itemize}
  \item[(a)] $A_n$ is nilpotent, $A_h$ is diagonalizable, $A_s$ is semisimple,
    $A_e$ is semisimple with purely imaginary spectrum,
  \item[(b)] $A = A_n + A_s = A_n + A_h + A_e$,
  \item[(c)] $A_n, A_h, A_e$ commute pairwise.
  \item[(d)] $B \in \End(V)$ commutes with $A$ if and only if its commutes with
    $A_n$, $A_h$ and $A_e$. 
  \end{itemize}
\end{thm}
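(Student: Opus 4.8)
The plan is to reduce everything to the complex additive Jordan--Chevalley decomposition (which I take as known/citable) and then descend to $\R$. First I would complexify: put $V_\bC := V\otimes_\bR\bC$ and $A_\bC := A\otimes 1$, and write the Jordan--Chevalley decomposition over $\bC$ as $A_\bC = S + N$, where $S$ is diagonalizable, $N$ is nilpotent, $SN = NS$, and, crucially, $S$ and $N$ are polynomials in $A_\bC$. Since $A$ is real, the conjugation $\tau$ on $V_\bC$ commutes with $A_\bC$; then $\tau S\tau$ is diagonalizable, $\tau N\tau$ is nilpotent, they commute, and their sum is $A_\bC$, so uniqueness of the complex decomposition forces $\tau S\tau = S$ and $\tau N\tau = N$. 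Hence $S$ and $N$ are complexifications of real endomorphisms $A_s, A_n\in\End(V)$, and $A = A_s + A_n$ with $A_s$ semisimple, $A_n$ nilpotent, $A_sA_n = A_nA_s$.

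Next I would split $A_s$. The eigenvalues of $S$ occur in complex-conjugate pairs $\mu,\bar\mu$; let $S_h$ (resp.\ $S_e$) be the diagonalizable operator acting on the $\mu$-eigenspace of $S$ by the scalar $\Re\mu$ (resp.\ $i\,\Im\mu$). Lagrange interpolation (using that $S$ is diagonalizable) exhibits $S_h = p(S)$ for a polynomial $p$, and $S_e = S - S_h$, so $S_h, S_e, N$ are polynomials in $A_\bC$; in particular they pairwise commute and commute with $A_\bC$. Since $\Re$ and $i\,\Im$ behave compatibly with passing from the $\mu$- to the $\bar\mu$-eigenspace, a short check gives $\tau S_h\tau = S_h$ and $\tau S_e\tau = S_e$, so $S_h, S_e$ descend to real $A_h, A_e\in\End(V)$. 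Then $A_h$ is diagonalizable over $\R$ (its minimal polynomial, unchanged under field extension, is the minimal polynomial of the diagonalizable real-spectrum operator $S_h$, hence splits into distinct real linear factors), $A_e$ is semisimple with purely imaginary spectrum, $A = A_n + A_h + A_e$, and $A_n, A_h, A_e$ commute pairwise. This gives (a), (b), (c).

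For uniqueness, let $(A_n', A_s', A_h', A_e')$ be another tuple satisfying (a)--(c). From (b)--(c) each of $A_n', A_h', A_e'$ commutes with $A$, hence so does $A_s' = A_h' + A_e'$, and $A_n'$ commutes with $A_s'$. Complexifying, $A_\bC = A_{s,\bC}' + A_{n,\bC}'$ is a Jordan--Chevalley decomposition over $\bC$, so $A_{s,\bC}' = S$ and $A_{n,\bC}' = N$ by uniqueness there, whence $A_s' = A_s$ and $A_n' = A_n$. Likewise $A_{h,\bC}'$ and $A_{e,\bC}'$ are commuting diagonalizable operators with real (resp.\ purely imaginary) spectrum summing to $S$, hence simultaneously diagonalizable with $S$, and on each $S$-eigenspace for $\mu$ they are forced to act by $\Re\mu$ and $i\,\Im\mu$; thus they equal $S_h, S_e$, giving $A_h' = A_h$ and $A_e' = A_e$. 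Finally (d): if $B$ commutes with $A$ then $B_\bC$ commutes with $A_\bC$, hence with the polynomials $N, S_h, S_e$ in $A_\bC$, i.e.\ $B$ commutes with $A_n, A_h, A_e$; the converse is immediate from $A = A_n + A_h + A_e$.

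The main obstacle I expect is purely the bookkeeping in the descent step -- verifying $\tau$-invariance of each complex operator produced and checking that the resulting real operators carry exactly the claimed spectral type (notably genuine $\R$-diagonalizability of $A_h$, not merely diagonalizability of $A_{h,\bC}$) -- rather than any conceptual difficulty, since the complex theory supplies everything essential.
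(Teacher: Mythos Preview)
Your argument is correct and follows essentially the same route as the paper: complexify, take the additive Jordan decomposition $A=A_s+A_n$, and split $A_s$ by letting the hyperbolic part act as $\Re\mu$ on the (generalized) $\mu$-eigenspace. Two minor differences are worth noting: you systematically exploit that $N$, $S_h$, $S_e$ are polynomials in $A_\bC$ (which gives a slightly cleaner proof of (d) than the paper's eigenspace-preservation argument), and you supply an explicit uniqueness argument, which the paper's proof omits entirely.
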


\begin{proof} Let $V_\bC$ be the complexification of $V$, 
  $V_\bC^\lambda$ the generalized $\lambda$-eigenspace of $A$ on $V_\bC$, and 
  $V_{\bC,\lambda}$ the $\lambda$-eigenspace of $A$ on $V_\bC$. We put
  \begin{equation*} V_{[\lambda]} := V \cap (V_\bC^\lambda + V_\bC^{\overline\lambda}) \end{equation*}
  and observe that
  \begin{equation*} V = \bigoplus_{\textrm{Im} \lambda \geq 0} V_{[\lambda]}.\end{equation*}
  Let $A = A_s + A_n$ be the Jordan decomposition of $A$ into
  nilpotent and semisimple component. Then $A_s$ and $A_n$ commute
  and $V_\bC^\lambda$ is the $\lambda$-eigenspace of $A_s$ on $V_\bC$.
  We define $A_h$ by $A_h v = \textrm{Re} \lambda \cdot v$ for $v \in V_{[\lambda]}$.
  Then $A_h$ is diagonalizable over $\R$ and commutes with
  $A_n$ and $A_s$, hence it also commutes with $A_e := A_s - A_h$,
  and $A_e$ acts on $V_\bC^\lambda$ by multiplication with
  $i \textrm{Im} \lambda$. This proves (a)-(c). To verify (d), we assume that
  $B$ commutes with $A$. Then $B$ preserves the generalized
  eigenspaces of $A$ on $V_\bC$, hence commutes with $A_s$ and therefore
  also with $A_n$. It also preserves the eigenspaces of 
  $A_h$, hence commutes with $A_h$, and finally also with $A_e
  = A_s - A_h$.
\end{proof}

\begin{thm}\label{thm:Jordan_semisimple}
  Let $L$ be a real semisimple Lie algebra and $x\in L$.
  Then there exist pairwise commuting uniquely determined elements
  $x_n, x_h, x_e \in L$ such that ${x = x_n + x_h + x_e}$ and
  $\ad x = \ad x_n + \ad x_h + \ad x_e$ is the real Jordan decomposition
  of $\ad x$. 
\end{thm}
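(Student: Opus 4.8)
The plan is to push the real Jordan decomposition of the endomorphism $\ad x\in\End(L)$, supplied by \Cref{thm_realJordan}, back into $L$ itself, exploiting that a semisimple Lie algebra has only inner derivations. First I would apply \Cref{thm_realJordan} to $A:=\ad x$, obtaining pairwise commuting endomorphisms $A_n,A_h,A_e$ of $L$ with $A=A_n+A_h+A_e$ and the stated nilpotency, $\bR$-diagonalizability and purely-imaginary-spectrum properties. The crux is then to show that each of $A_n$, $A_h$, $A_e$ is a \emph{derivation} of $L$.

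For that I would complexify: extend $A$ to $L_\bC$ and decompose $L_\bC=\bigoplus_\lambda L_\bC^\lambda$ into generalized eigenspaces of $A$. A standard computation with the Leibniz rule $A[u,v]=[Au,v]+[u,Av]$ (via the binomial expansion of $(A-(\lambda+\mu))^n$) gives $[L_\bC^\lambda,L_\bC^\mu]\subseteq L_\bC^{\lambda+\mu}$. Consequently, any operator acting on $L_\bC^\lambda$ as multiplication by $c(\lambda)$, for an additive function $\lambda\mapsto c(\lambda)$, is a derivation of $L_\bC$; taking $c(\lambda)=\lambda$, $c(\lambda)=\operatorname{Re}\lambda$ and $c(\lambda)=i\operatorname{Im}\lambda$ exhibits $A_s:=A_h+A_e$, $A_h$ and $A_e$ — hence also $A_n=A-A_s$ — as derivations of $L_\bC$. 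Since these are the complexifications of the real operators from \Cref{thm_realJordan}, they commute with complex conjugation and therefore restrict to derivations of $L$.

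Next I would invoke the standard fact that for a semisimple $L$ the adjoint map $\ad\colon L\to\operatorname{Der}(L)$ is an isomorphism (injective since $L$ has trivial center; surjective by Whitehead's lemma / nondegeneracy of the Killing form). Thus there are unique $x_n,x_h,x_e\in L$ with $\ad x_n=A_n$, $\ad x_h=A_h$, $\ad x_e=A_e$. Applying $\ad$ to $x-(x_n+x_h+x_e)$ yields $0$, so $x=x_n+x_h+x_e$; applying $\ad$ to $[x_n,x_h]$, $[x_n,x_e]$, $[x_h,x_e]$ and using $\ad[a,b]=[\ad a,\ad b]$ together with the commutativity in \Cref{thm_realJordan} and injectivity of $\ad$ shows the three summands commute. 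By construction $\ad x_n=(\ad x)_n$, $\ad x_h=(\ad x)_h$, $\ad x_e=(\ad x)_e$, so $\ad x=\ad x_n+\ad x_h+\ad x_e$ is exactly the real Jordan decomposition of $\ad x$.

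Uniqueness is then formal: if $x=x_n'+x_h'+x_e'$ is another such decomposition, then $\ad x_n',\ad x_h',\ad x_e'$ satisfy all the defining properties (a)--(c) of the real Jordan decomposition of $\ad x$, so by the uniqueness clause of \Cref{thm_realJordan} they coincide with $A_n,A_h,A_e$, and injectivity of $\ad$ forces $x_n'=x_n$, $x_h'=x_h$, $x_e'=x_e$. The one genuinely non-formal point is the derivation-preservation step — that the Jordan components of an inner derivation are again derivations — while everything else is bookkeeping around the bijectivity of $\ad$ on a semisimple Lie algebra.
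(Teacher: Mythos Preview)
Your proposal is correct and follows essentially the same route as the paper: both arguments reduce, via the isomorphism $\ad\colon L\to\operatorname{Der}(L)$, to showing that the real Jordan components of a derivation are again derivations, and both derive this from the bracket relation $[L_\bC^\lambda,L_\bC^\mu]\subseteq L_\bC^{\lambda+\mu}$ (the paper phrases the step for $A_h$ in terms of the eigenspaces $L_\mu(D_h)=L\cap\sum_{\operatorname{Re}\lambda=\mu}L_\bC^\lambda$, while you use the equivalent ``additive scalar $c(\lambda)$'' formulation). Your write-up is in fact slightly more complete, since you spell out the commutativity and uniqueness arguments that the paper leaves implicit.
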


\begin{proof} Since $\ad \colon L \to \textrm{der}(L)$ is a linear isomorphism,
  it suffices to verify that, for a derivation $D \in \textrm{der} L$,
  the real Jordan components are contained in $\textrm{der} L$.
  For $D_s$ this follows from the fact that
  \begin{equation*} [L_\bC^\lambda, L_\bC^\mu] \subseteq L_\bC^{\lambda + \mu}, \quad
    \lambda,\mu \in \bC.\end{equation*}
  This in turn implies that the real eigenspaces
  \begin{equation*} L_\mu(D_h) = L \cap \Big(\sum_{\textrm{Re} \lambda =  \mu} L_\bC^\lambda\Big)\end{equation*} 
  in $L$ satisfy
  \begin{equation*}
    [L_\mu(D_h), L_\nu(D_h)] \subseteq L_{\mu + \nu}(D_h).
  \end{equation*}
  Therefore $D_h$ is a derivation of $D$. As $D_n = D - D_s$ is a derivation, the same holds for $D_e = D_s - D_h$.
\end{proof}


\addcontentsline{toc}{section}{References}

\def\polhk#1{\setbox0=\hbox{#1}{\ooalign{\hidewidth
  \lower1.5ex\hbox{`}\hidewidth\crcr\unhbox0}}}

\end{document}